\def\ga{\mathfrak{a}}
\def\fa{\mathfrak{a}}
\def\fk{\mathfrak{k}}
\def\fu{\mathfrak{u}}
\def\fm{\mathfrak{m}}
\def\fz{\mathfrak{z}}
\def\C{\mathbb{C}}
\def\N{\mathbb{N}}
\def\R{\mathbb{R}}
\def\cF{\mathcal{F}}
\def\cH{\mathcal{H}}
\def\cO{\mathcal{O}}
\theoremstyle{plain}
\numberwithin{equation}{section}
\newtheorem{theorem}{Theorem}[section]
\newtheorem{corollary}[theorem]{Corollary}
\newtheorem{lemma}[theorem]{Lemma}
\theoremstyle{definition}
\newcommand{\GL}{\mathrm{GL}}
\newcommand{\so}{\mathfrak{so}}
\newcommand{\IFM}{\bigoplus_{\mu \in \Lambda^+,\, d}V_\mu}
\newcommand{\LK}{\Lambda^+}
\newcommand{\ip}{\langle\, \cdot\, ,\, \cdot\, \rangle}
\def\sideremark#1{\ifvmode\leavevmode\fi\vadjust{\vbox to0pt{\vss \hbox to 0pt{\hskip\hsize\hskip1em\vbox{\hsize2cm\tiny\raggedright\pretolerance10000 \noindent #1\hfill}\hss}\vbox to8pt{\vfil}\vss}}}
\begin{document}
\title[The Segal-Bargmann Transform on Compact Symmetric
spaces]{The Segal-Bargmann Transform on Compact Symmetric Spaces and their Direct Limits}
\author{Gestur \'{O}lafsson and Keng Wiboonton}
\thanks{Both authors were supported by NSF grant DMS-0801010}
\address{Department of Mathematics, Louisiana State University, Baton Rouge,
LA 70803, USA}
\email{olafsson@math.lsu.edu}
\address{Department of Mathematics, Louisiana State University, Baton Rouge,
LA 70803, USA}
\email{kwiboo1@math.lsu.edu}
\subjclass{22E45, 32A25, 44A15}
\keywords{Heat Equation, Segal-Bargmann Transform, Compact Riemannian
symmetric spaces, Direct limits of compact symmetric spaces}

\begin{abstract} We study the Segal-Bargmann transform, or the heat transform, $H_t$ for a compact symmetric space $M=U/K$. We prove that $H_t$ is a unitary isomorphism $H_t : L^2(M) \to \cH_t (M_\C)$ using representation theory and the restriction principle. We then show that the Segal-Bargmann transform behaves nicely under propagation of symmetric spaces.  If $\{M_n=U_n/K_n,\iota_{n,m}\}_n$ is a direct family of compact symmetric spaces such that $M_m$ propagates $M_n$, $m\ge n$, then this gives rise to direct families of Hilbert spaces $\{L^2(M_n),\gamma_{n,m}\}$ and $\{\cH_t(M_{n\C}),\delta_{n,m}\}$ such that $H_{t,m}\circ \gamma_{n,m}=\delta_{n,m}\circ H_{t,n}$. We also consider similar commutative diagrams for the $K_n$-invariant case. These lead to isometric isomorphisms between the
Hilbert spaces $\varinjlim L^2(M_n)\simeq \varinjlim \mathcal{H}
(M_{n\mathbb{C}})$ as well as $\varinjlim L^2(M_n)^{K_n}\simeq \varinjlim
\mathcal{H} (M_{n\mathbb{C}})^{K_n}$.
\end{abstract}
\date{}
\maketitle

\section*{Introduction}

\noindent
Denote by $h_t(x)=(4\pi t)^{-n/2}e^{-\|x\|^2/4t}$ the heat kernel on $\R^n$ and denote by $d\mu_t (x)=h_t(x)dx$ the heat kernel measure on $\R^n$. Denote by $\Delta $ the Laplace operator on $\R^n$.
The Segal-Bargmann transform $H_t$, also called the heat kernel transform, on
$L^2(\mathbb{R}^n)$ or on $L^2(\mathbb{R}^n,\mu_t)$ is defined by mapping a function $f \in L^2(%
\mathbb{R}^n)$ to the holomorphic extension to $\mathbb{C}^n$ of $%
f*h_t=e^{t\Delta}f$. The image of $L^2(\mathbb{R}^n,\mu_t)$ under the Segal-Bargmann transform is the Fock space $\cF_t (\C^n)$ of holomorphic functions $F:\C^n\to \C$ such that $(2\pi t)^{-n}\int |F(x+iy)|^2e^{-\|x+iy\|^2/2t}\, dxdy<\infty$. Thus $\cF_t(\C^n)=L^2(\R^{2n}, d\mu_{t/2}(x)d\mu_{t/2}(y))\cap \cO (\C^n)$ whereas the image of $L^2(\mathbb{R}^n)$ is $\cH_t(\C^n)=L^2(\R^{2n}, d\mu_{t/2}(y))\cap \cO (\C^n)$, also called the Fock space. This idea, in a slightly different form, was first introduced
by V. Bargmann in \cite{Bargmann}. An infinite dimensional version
was considered by I. E. Segal in \cite{Se63}. A short history of the
Segal-Bargmann transforms for $\mathbb{R}^n$ can be found in \cite{Hall2000}
and \cite{Hall2001}.

For infinite dimensional analysis one is forced to consider the heat kernel transform defined on $L^2(\mathbb{R}^n, \mu_t)$. The reason is,
that the heat kernel measure forms a projective family of probability
measures on $\R^n$ and hence $\{L^2(\mathbb{R}^n, \mu_t)\}_{n\in \N}$ forms a direct and projective
family of Hilbert spaces. Similarly, $\{\cF_t(\C^n)\}_{n\in \N}$ forms a direct and projective family of Hilbert spaces and the $\{H_{t,n}: L^2(\R^n,\mu_t)\to \cF_t(\C^n)\}$ is direct and defines a unitary isomorphism $\varinjlim L^2(\R^n,\mu_t)\to \varinjlim \cF_t (\C^n)$.

The symmetric spaces of compact and noncompact type form a natural settings for
generalizations of the heat kernel transform and
the Segal-Bargmann transform. This was first done in \cite{Hall94} where the
Segal-Bargmann transforms were extended to the compact group case and
compact homogeneous spaces $U/K$. As in the flat case, the Segal-Bargmann
transform $H_t$ on $L^2(U/K)$ is given by the holomorphic extension of $f*h_t$ to the
complexification $U_\mathbb{C}$ of $U$. The author showed that the Segal-Bargmann
transform is an unitary isomorphism from $L^2(U)$ onto $\mathcal{O}(U_\mathbb{%
C})\cap L^2(U_\mathbb{C}, \nu_t)$, where $\mathcal{O}(U_\mathbb{C})$ denotes
the space of holomorphic functions on $U_\mathbb{C}$ and $\nu_t$ is the $U$-average heat kernel on $U_\mathbb{C}$. Analogous results for compact
symmetric spaces were given by Stenzel in \cite{Stenzel}. The image of the
Segal-Bargmann transform $H_t$ is a $L^2$-Hilbert space of holomorphic functions
on the complexification $U_\mathbb{C}/K_\mathbb{C}$ of $U/K$. In \cite{Hall94} the heat kernel measure on $U_\mathbb{C}/U$ was used to define the Fock space, whereas \cite{Stenzel} uses the heat kernel measure on the
non-compact dual $G/K$ of $U/K$. Both measures coincide as can be shown by using the Flensted-Jensen duality \cite{FJ}. In \cite{HZ09} and \cite{Wiboonton} the unitarity of the Segal-Bargmann transform was proved using the restriction principle introduced in \cite{Olafsson}.

Some work has been done on constructing a heat kernel measure on the
direct limit of some complex groups. In \cite{Gordina}, Gordina constructed
the Fock space on $\text{SO}(\infty, \mathbb{C})$, using the heat kernel
measure determined by an inner product on the Lie algebra $\so (\infty, \mathbb{C})$.
Another direction is taken in \cite{HS98} where the Segal-Bargmann
transform on path-groups is considered.

In the noncompact case new technical problems arise. In particular, in the compact case, every eigenfunction of the algebra of invariant differential operators as well as the heat kernel
itself, extends to a holomorphic function on $U_\mathbb{C}/K_\mathbb{C}$.
This follows from the fact, that each irreducible representation of $U$
extends to a holomorphic representation of $U_\mathbb{C}$ with a well
understood growth. In the noncompact case this does not hold anymore. The natural complexification in this case is the \textit{Akhiezer-Gindikin domain} $\Xi\subset G_\mathbb{C}/K_\mathbb{C}$, see \cite{AG90}. Using results from \cite{ks05} it was shown in \cite{kos05} that the image of the Segal-Bargmann transform on $G/K$ can be
identified as a Hilbert-space of holomorphic functions on $\Xi$, but in this
case the norm on the Fock space is not given by a density function as in the
flat case. Some special cases have also been considered in \cite{Ha06,HM}. A
different description that also works for arbitrary positive multiplicity functions
was given in \cite{os}.

{}From the point of view of infinite dimensional
analysis, the drawback of all of those articles is that only the invariant
measure on $G/K$ is considered, so far no description of the image of the
space $L^2(G/K,\mu_t)$ under the holomorphic extension of $f*h_t$ exists, except one can describe the space in
terms of its reproducing kernel.

The first step to consider the limit of noncompact symmetric spaces was done in \cite{Sinton}. There it was shown for a special class of symmetric spaces $G_n/K_n$ that $\{L^2(G_n/K_n,\mu_t)\}_n$ forms projective family of Hilbert spaces. But no attempt was made to consider  the Segal-Bargmann transform.

Our main goal in this article is to use some ideas from the work of J. Wolf, in particular \cite{JWCompact}, to construct the Segal-Bargmann on limits of special classes of compact symmetric spaces. In \cite{OlafssonWolf} the authors introduced the concept of \textit{propagation} of symmetric spaces. The results of \cite{JWCompact} applies to this situation resulting in an isometric embedding $\gamma_{n,m}$ of $L^2(U_n/K_n)$ into $L^2(U_m/K_m)$ for $m>n$. Let $M_n=U_n/K_n$,  and $M_{n\C} =U_{n\C}/K_{n\C}$. We show, using the ideas from \cite{JWCompact} that we have an isometric embedding $\delta_{n,m} : \cH_t(M_{\C n})\to \cH_t (M_{\C m})$ such that $H_{t,m}\circ \gamma_{n,m}=\delta_{n,m}\circ H_{t,n}$. This then results in a unitary isomorphism
\[H_{t,\infty} : \varinjlim L^2(M_n)\to \varinjlim \cH_t(M_{n\C})\, .\]

In the $K_n$-invariant case, in general $j_{n,m}(L^2(M_n)^{K_n}) \nsubseteq L^2(M_m)^{K_m}$ for $m > n$. So different maps have to be considered in this case. In this article, we define an isometric embedding $\eta_{n,m} : L^2(M_n)^{K_n}\to L^2(M_m)^{K_m}$ and similarly for $\cH_t(M_{n\C})^{K_n}$ with the embeddings $\phi_{n,m}$ such that $H_{t,m}\circ \eta_{n,m}= \phi_{n,m}\circ H_{t,n}$ resulting in a unitary isomorphism of the directed limits. This is the result from \cite{Wiboonton}.

The article is organized as follows. In Section \ref{s1} we introduce the basic notation used in this article. In Section \ref{s2} we discuss needed results from representation theory and Fourier analysis related to  symmetric spaces. The Fock space $\cH_t(M)$ is introduced in Section \ref{s3}. We show that $\cH_t(M)$ is a reproducing kernel Hilbert space and determine its reproducing kernel. We also describe $\cH_t (M)$ as a sequence space. The Segal-Bargmann transform is introduced in Section \ref{s4} and we show that it is an unitary $U$-isomorphism in Theorem 4.3. In Section \ref{s5} we recall the notion of propagation of symmetric spaces introduced in \cite{OlafssonWolf}. The infinite limit is considered in the last two sections, Section \ref{s6} and Section 7.

\section{Basic Notations}\label{s1}

\noindent
Let $M$ be a \textit{symmetric space of the compact type}. Thus,
there exists a connected compact semisimple Lie group $U$ and a nontrivial
involution $\theta : U \to U$ such that $U^{\theta}_{0} \subseteq K
\subseteq U^{\theta}$ and $M=U/K$. Here, $U^{\theta}=\{u\in U\mid \theta
(u)=u\}$ denotes the subgroup of $\theta$-fixed points, and the index ${}_0$
stands for the connected component containing the identity.

To simplify the exposition we assume that $U$ is \textit{simply connected}.
In this case $U^\theta$ is connected and hence $K=U^\theta$ is connected and
$U/K$ is simply connected. The more general case can be treated by following
the ideas in Section 2 in \cite{OlafssonSchlichtkrull}.

The base point $eK \in M$ will be denoted by $o$. Denote the Lie algebra of $%
U$ by $\mathfrak{u}$. $\theta$ defines a Lie algebra involution on $%
\mathfrak{u}$ which we will also denote by $\theta$. Then $\mathfrak{u}=%
\mathfrak{k}\oplus \mathfrak{s}$, where $\mathfrak{k} = \left\{ X \in
\mathfrak{u} \mid \theta (X) = X \right\}$ and $\mathfrak{s} =\{X\in
\mathfrak{u} \mid \theta (X)=-X\}$. Note that $\mathfrak{k}$ is the Lie
algebra of $K$, $\mathfrak{s}\simeq_K T_o(M)$ via the map $X\mapsto D_X$,
\begin{equation*}
D_Xf(o)=\left.\dfrac{d}{dt}\right|_{t=0}f(\exp (tX)\cdot o)
\end{equation*}
and $T(M)\simeq U\times_{\mathrm{Ad}|_{\mathfrak{s}}}\mathfrak{s}$.

As $U$ is compact, there is a faithful representation of $U$, so we can--and
will--assume that $U$ is linear: $U\subseteq \mathrm{U} (n)\subset \mathrm{GL%
} (n,\mathbb{C})$ for some $n\in\mathbb{N}$. Then $\mathfrak{u}\subseteq
\mathfrak{u}(n)$. Define a $U$-invariant inner
product on $\mathfrak{u}$ by
\begin{equation*}
\langle X,Y\rangle =- \mathrm{Tr\, } X Y =\mathrm{Tr\, } XY^*\, .
\end{equation*}
By restriction, this defines a $K$-invariant inner product on $\mathfrak{s}$
and hence a $U$-invariant metric on $M$. We note that $\mathfrak{k}$ and $\mathfrak{s}$ are orthogonal subspaces of $\mathfrak{u}$ with respect to $\ip$.

The inner product on $\mathfrak{u}$ determines an inner product on the dual
space $\mathfrak{u}^{*}$ in a canonical way. Furthermore, these inner
products extend to the inner products on the corresponding complexifications
$\mathfrak{u}_{\mathbb{C}}$ and $\mathfrak{u}^{*}_{\mathbb{C}}$. All these
bilinear forms are denoted by the same symbol $\ip$.

Let $\mathfrak{a} \subseteq \mathfrak{s}$ be a maximal abelian subspace of
$\mathfrak{s}$. View $\fa_\C^*$ as the space of $\mathbb{C}$-linear maps $\mathfrak{a}_\mathbb{C}%
\to \mathbb{C}$. Then $\mathfrak{a}^*=\{\lambda\in\mathfrak{a}^*_\mathbb{C}
\mid \lambda (\mathfrak{a} )\subseteq \mathbb{R}\}$ and $i\mathfrak{a}%
^*=\{\lambda\in\mathfrak{a}^*_\mathbb{C} \mid \lambda (\mathfrak{a}
)\subseteq i\mathbb{R}\}$.

For $\alpha \in \mathfrak{a}^*_\mathbb{C}$, let ${\mathfrak{u}_{\mathbb{C}}}
= \{X \in \mathfrak{u}_{\mathbb{C}} \mid (\forall H\in\mathfrak{a}_%
\mathbb{C} )\,\, [H,X] = \alpha(H)X \}$. If $\mathfrak{u}_{\mathbb{C}%
\alpha}\not=\{0\}$ then $\alpha \in i\mathfrak{a}^*$ and $\mathfrak{u}_{%
\mathbb{C}\alpha }\cap \mathfrak{u}=\{0\}$. If $\mathfrak{u}_{\mathbb{C}%
\alpha} \neq \{0\}$, then $\alpha$ is called a \textit{restricted root}.
Denote by $\Sigma = \Sigma(\mathfrak{u}_\mathbb{C}, \mathfrak{a}_\mathbb{C}%
)\subset i\mathfrak{a}^*$ the set of restricted roots. Then
\[\fu_\C=\fa_\C\oplus \fm_\C\oplus \bigoplus_{\alpha\in\Sigma}\fu_{\C\alpha}\]
where $\fm=\fz_\fk (\fa)$ is the centralizer of $\fa$ in $\fk$.

The simply connected group $U$ is contained as a maximal compact subgroup in
the simply connected complex Lie group $U_\mathbb{C}\subseteq \GL (n,\C)$ with Lie algebra
$\mathfrak{u}_\mathbb{C}=\mathfrak{u}\otimes_\mathbb{R} \mathbb{C}$. Denote
by $\theta :U_\mathbb{C}\to U_\mathbb{C}$ the holomorphic extension of $%
\theta $. Let $\sigma : U_\mathbb{C}\to U_\mathbb{C}$ be the conjugation on $%
U_\mathbb{C}$ with respect to $U$. Thus the derivative of $\sigma $ is given
by $X+iY\mapsto X-iY$, $X,Y\in \mathfrak{u}$. $\sigma $ is the Cartan
involution on $U_\mathbb{C}$ with $U=U_\mathbb{C}^\sigma$. We will also
write $\overline{g}=\sigma (g)$ for $g\in U_\mathbb{C}$.

Let $K_\mathbb{C}=U_\mathbb{C}^\theta$. Then $K_\mathbb{C}$ has Lie algebra $%
\mathfrak{k}_\mathbb{C}$ and $K$ is a maximal compact subgroup of $K_\mathbb{%
C}$. $K_\mathbb{C}$ is connected as $U_\mathbb{C}$ is simply connected and $M_\mathbb{C}=U_\mathbb{C} /K_\mathbb{C}$ is a simply connected complex symmetric space. As $\sigma (K_\mathbb{C})=K_\mathbb{C}$ it follows that $\sigma $ defines a
conjugation on $M_\mathbb{C}$ with $(M_\mathbb{C}^\sigma)_o=M$. Thus $M$ is
a totally real submanifold of $M_\mathbb{C}$. In particular,

\begin{lemma}
\label{HoloRestr} If $F\in \cO ( M_\mathbb{C})$  and $F|_M=0$, then $F=0$.
\end{lemma}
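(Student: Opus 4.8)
The plan is to reduce the statement to the corresponding fact for holomorphic functions on a complex Euclidean neighborhood, via local charts adapted to the totally real embedding $M\hookrightarrow M_\C$. The key structural input is already recorded in the excerpt: $M_\C=U_\C/K_\C$ is a connected complex manifold, and $\sigma$ is an antiholomorphic involution whose fixed-point set has $M$ as its identity component; since $\dim_\R M=\dim_\C M_\C$, the submanifold $M$ is totally real and maximal-dimensional in $M_\C$. Connectedness of $M_\C$ follows from connectedness of $U_\C$ (itself a consequence of $U$ being connected, or of $U_\C$ being simply connected as stated).

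First I would fix the analytic core. Near the base point $o$, choose a holomorphic chart $\psi:V\to W\subseteq\C^d$ (with $d=\dim_\C M_\C$) intertwining $\sigma$ with complex conjugation on $W$, so that $\psi(V\cap M)=W\cap\R^d$, which is a totally real slice of full dimension. If $F\in\cO(M_\C)$ vanishes on $M$, then $G:=F\circ\psi^{-1}$ is holomorphic on $W$ and vanishes on $W\cap\R^d$. By the standard identity theorem for several complex variables — a holomorphic function on a connected open subset of $\C^d$ vanishing on a maximal totally real submanifold is identically zero (equivalently, all Taylor coefficients at a real point vanish because the partial derivatives along $\R^d$ determine them by the Cauchy–Riemann equations) — we get $G\equiv 0$ on $W$, hence $F\equiv 0$ on $V$.

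Next I would globalize by a connectedness argument. Let $Z=\{p\in M_\C: F\equiv 0 \text{ in a neighborhood of }p\}$. This set is open by definition and nonempty by the previous paragraph (it contains $V$). It is also closed: if $p\in\overline{Z}$, pick a connected chart around $p$ on which $F$ is holomorphic; $F$ vanishes on a nonempty open subset of that chart (an overlap with $Z$), so by the identity theorem $F$ vanishes on the whole chart, whence $p\in Z$. Since $M_\C$ is connected, $Z=M_\C$, so $F=0$. (One could alternatively phrase the globalization directly: $\{F=0\}$ is an analytic subset of $M_\C$ containing the totally real, maximal-dimensional submanifold $M$, hence has nonempty interior, hence is all of $M_\C$ by irreducibility of the connected complex manifold.)

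**The main obstacle**, such as it is, is purely bookkeeping: producing the $\sigma$-adapted holomorphic chart in which $M$ becomes $\R^d\cap W$. This is where one genuinely uses that $\sigma$ is an antiholomorphic involution and $M=(M_\C^\sigma)_o$ with the matching dimension count; the linearization of $\sigma$ at $o$ is a conjugate-linear involution of $T_o M_\C$ whose fixed space is $T_o M$, and averaging an arbitrary holomorphic chart against $\sigma$ (replacing $\psi$ by $\tfrac12(\psi+\overline{\psi\circ\sigma})$ after a linear adjustment) yields the desired normal form. Everything else is the classical one- and several-variable identity theorem plus the open-closed argument, and no estimate or representation-theoretic input beyond what the excerpt already provides is needed.
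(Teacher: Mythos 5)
Your proof is correct and is essentially the argument the paper has in mind: the lemma is stated as an immediate consequence of $M$ being a maximal-dimensional totally real submanifold of the connected complex manifold $M_\mathbb{C}$, and you have simply supplied the standard details ($\sigma$-adapted chart, several-variable identity theorem, open-closed globalization) that the paper leaves implicit. No gap; nothing further is needed.
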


Let $\mathfrak{g}= \mathfrak{k}+i\mathfrak{s}=\mathfrak{u}_\mathbb{C}%
^{\theta\sigma}$ and let $G=U_\mathbb{C}^{\theta\sigma}$ denote the analytic
subgroup of $U_\mathbb{C}$ with Lie algebra $\mathfrak{g}$. $M^d=G/K$ is a
symmetric space of the noncompact type and $M^d=(M_\mathbb{C}^{\sigma
\theta})_o$. Hence, $M^d$ is also a totally real submanifold of $U_\mathbb{C}/ K_\mathbb{C}$.
$M^d$ is called the \textit{noncompact dual} of $M$.

The following is clear (and well known) using the Cartan decomposition of $U_%
\mathbb{C}$ and $G$:

\begin{lemma}
\label{le-CartanDecomp} Let $g\in U_{\mathbb{C}}$. Then there exists a unique $u\in U$ and a unique
$X\in i\mathfrak{u}$ such that $g=u\exp X$. We have $g\in G$ if and only if $u\in K$
and $X\in i\mathfrak{s}$.
\end{lemma}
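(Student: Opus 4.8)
The plan is to deduce the statement from the global Cartan (polar) decomposition applied to $U_\C$ and to $G$. Recall that $\fu_\C$, viewed as a real Lie algebra, is semisimple and that $\fu_\C=\fu\oplus i\fu$ is a Cartan decomposition whose Cartan involution is $d\sigma$, which acts by $X+iY\mapsto X-iY$ ($X,Y\in\fu$), with $(+1)$-eigenspace $\fu$ and $(-1)$-eigenspace $i\fu$. Since $U_\C$ is connected, has finite center, and $U=U_\C^{\sigma}$ is a maximal compact subgroup, the global Cartan decomposition asserts that $U\times i\fu\to U_\C$, $(u,X)\mapsto u\exp X$, is a diffeomorphism; this immediately gives the existence and uniqueness of $u\in U$ and $X\in i\fu$ with $g=u\exp X$. (Concretely, since $U\subseteq\rU(n)$ one has $\sigma(g)=(g^{*})^{-1}$ for $g\in U_\C$, so $U_\C$ is a closed, Zariski-closed, $*$-invariant subgroup of $\GL(n,\C)$; writing the usual polar decomposition $g=u\exp X$ in $\GL(n,\C)$ with $u\in\rU(n)$ and $X$ Hermitian, one gets $\exp(2X)=g^{*}g=\sigma(g)^{-1}g\in U_\C$, hence the one-parameter group $t\mapsto\exp(2tX)$ lies in $U_\C$, forcing $X\in\fu_\C$; being Hermitian, $X\in\fu_\C\cap i\,\mathfrak{u}(n)=i\fu$, and then $u=g\exp(-X)\in U_\C\cap\rU(n)=U$.)

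Next I would run the same argument for $G=U_\C^{\theta\sigma}$. It is a connected semisimple Lie group with Lie algebra $\gg=\fk+i\fs$; the restriction $\sigma|_G$ is the Cartan involution of the noncompact-type pair $(G,K)$, with $(+1)$-eigenspace $\fk$ and $(-1)$-eigenspace $i\fs$, and its fixed-point set is $G\cap U=U_\C^{\theta\sigma}\cap U_\C^{\sigma}=U_\C^{\theta}\cap U_\C^{\sigma}=K_\C\cap U=K$, a maximal compact subgroup of $G$. Hence the global Cartan decomposition of $G$ shows that $K\times i\fs\to G$, $(k,Y)\mapsto k\exp Y$, is a diffeomorphism; in particular every $g\in G$ is uniquely of the form $g=k\exp Y$ with $k\in K$ and $Y\in i\fs$.

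Finally I would compare the two decompositions. If $g\in G$, write $g=k\exp Y$ with $k\in K\subseteq U$ and $Y\in i\fs\subseteq i\fu$; by the uniqueness part of the first paragraph this forces $u=k\in K$ and $X=Y\in i\fs$. Conversely, if $u\in K\subseteq G$ and $X\in i\fs\subseteq\gg$, then $\exp X\in G$ and therefore $g=u\exp X\in G$. This yields the stated equivalence. The only genuinely nonroutine point is the verification that $\sigma$ and $\sigma|_G$ are Cartan involutions with the claimed eigenspaces and that the polar decomposition of $\GL(n,\C)$ restricts to the closed $\sigma$-stable subgroups $U_\C$ and $G$ (equivalently, that the abstract global Cartan decomposition applies); both are standard in view of the set-up in Section~\ref{s1}, which is why the authors call the statement "clear and well known."
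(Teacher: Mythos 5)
Your proposal is correct and follows exactly the route the paper indicates: the paper offers no written proof beyond the remark that the lemma is ``clear (and well known) using the Cartan decomposition of $U_\mathbb{C}$ and $G$,'' and you carry out precisely that plan, applying the global Cartan decompositions of $U_\mathbb{C}$ (with maximal compact $U$) and of $G$ (with maximal compact $K$) and comparing them via uniqueness. The added matrix-level verification of the polar decomposition inside $\GL(n,\mathbb{C})$ is a sound way of making the ``well known'' step explicit.
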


\section{$L^2$ Fourier Analysis}\label{s2}

\noindent
In this section, we give a brief overview of the representation
theory related to harmonic analysis on $M$.

Since $U$ is assumed to be simply connected, there is a one-to-one correspondence
between $\widehat{U}$, the set of equivalence classes of irreducible unitary
representations of $U$, and the semi-lattice of \textit{dominant
algebraically integral weights} on a Cartan subalgebra containing $\mathfrak{a}$.
We denote this correspondence by $\mu \leftrightarrow (\pi_\mu, V_\mu)$.
$(\pi_\mu,V_\mu)$ is \textit{spherical} if
\begin{equation*}
V_\mu^K=\{v\in V_\mu \mid (\forall k\in K)\,\, \pi_\mu (k)v =v\}\not=
\{0\}\, .
\end{equation*}
There exists an isometric $U$-intertwining operator $V_\mu \hookrightarrow L^2(M)$ if and only if $V_\mu^K \neq \{0\}$. In that case $\dim V_\mu^K=1$. The description of the highest weights of the spherical representations is given by
the Cartan-Helgason theorem, see Theorem 4.1, p. 535 in \cite{He84}. Fix a positive system $\Sigma^+\subset \Sigma$. Let
\begin{equation}
\Lambda^+_K (U)=\left\{ \mu \in i\mathfrak{a}^*\,\left|\, (\forall \alpha
\in \Sigma^+)\,\, \frac{\langle \alpha ,\mu \rangle }{\langle \alpha,\alpha
\rangle} \in \mathbb{Z}^+=\{0,1,\ldots \}\right.\right\}\, .
\end{equation}
As both $U$ and $K$ will be fixed for the moment we simply write $\Lambda^+$
for $\Lambda^+_K(U)$. $\Lambda^+$ is contained in the semi-lattice of
dominant algebraically integral weights.

\begin{theorem}
Let $(\pi_\mu,V_\mu)$ be an irreducible representation of $U$ with highest
weight $\mu$. Then $\pi_\mu$ is spherical if and only if $\mu \in \Lambda^+$.
\end{theorem}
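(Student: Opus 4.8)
The plan is to prove the two implications separately, using one structural reduction throughout. First I would fix a maximal abelian subspace $\gb\subseteq\fm=\fz_\fk(\fa)$ and set $\gt=\fa\oplus\gb$; this is a maximal abelian subalgebra of $\fu$, so $\gt_\C$ is a Cartan subalgebra of $\fu_\C$. Choosing the positive system $\Delta^+$ of $\Delta(\fu_\C,\gt_\C)$ compatibly with $\Sigma^+$, the space $\gn:=\bigoplus_{\alpha\in\Sigma^+}\fu_{\C\alpha}$ is a nilpotent subalgebra normalized by $\fa_\C$, contained in the sum of the positive full root spaces, and the complexified Iwasawa decomposition $\fu_\C=\fk_\C\oplus\fa_\C\oplus\gn$ holds. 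The reduction I would use is: if $v^+\in V_\mu$ is a highest weight vector, then $V_\mu=U(\fk_\C)v^+$, because by Poincar\'e--Birkhoff--Witt $U(\fu_\C)=U(\fk_\C)\,U(\fa_\C)\,U(\gn)$ while $\gn$ annihilates $v^+$ and $\fa_\C$ merely scales it. Hence the averaging projection $P=\int_K\pi_\mu(k)\,dk$ onto $V_\mu^K$ is nonzero if and only if $Pv^+\ne0$ (if $Pv^+=0$ then $P$ kills the $K$-cyclic, hence spanning, set $\{\pi_\mu(k)v^+\}$), and, since distinct $\gt_\C$-weight spaces are orthogonal, this holds if and only if the $\mu$-weight component of a nonzero vector of $V_\mu^K$ is nonzero.

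For the implication ``$\pi_\mu$ spherical $\Rightarrow\mu\in\Lambda^+$'' I would start from $0\ne v_0\in V_\mu^K$. By the reduction its $\mu$-component $(v_0)_\mu$ is nonzero; it is fixed by $\exp\gb\subseteq K$, which acts on it by the scalar $e^{\mu|_\gb}$, so $\mu|_\gb=0$, i.e. $\mu\in i\fa^*$. Since $\fa\perp\gb$, for $\alpha=\beta|_\fa\in\Sigma^+$ we get $\langle\mu,\alpha\rangle=\langle\mu,\beta\rangle\ge0$ by dominance of $\mu$. The remaining integrality $\langle\mu,\alpha\rangle/\langle\alpha,\alpha\rangle\in\Z$ for $\alpha\in\Sigma^+$ is the delicate point; I would obtain it by localizing at each $\alpha$: pick a $\theta\sigma$-equivariant homomorphism $\mathfrak{sl}(2,\R)\to\mathfrak{g}=\fk+i\fs$ carrying the standard triple into $\fu_{\C\alpha}$, $i\fa$ and $\fu_{\C,-\alpha}$ so that the image of $\so(2)$ lies in $\fk$; restricting $\pi_\mu$ along the integrated $\SU(2)\to U_\C$ and using that $v_0$ is fixed by the circle subgroup of $K$ that is the image of $\exp\so(2)$, the representation theory of $\SU(2)$ (the irreducible of highest weight $n$ has a vector fixed by that circle precisely when $n$ is even) forces, after some care, $2\langle\mu,\alpha\rangle/\langle\alpha,\alpha\rangle=\mu(h_\alpha)$ to be a nonnegative even integer; when $2\alpha\in\Sigma$ the triple attached to the short root gives the binding constraint. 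Equivalently, one can argue that the spherical matrix coefficient $\langle\pi_\mu(\cdot)v_0,v_0\rangle$ restricts to $\exp\fa_\C$ as a Weyl-group invariant regular function with exponents in the orbit of $\mu$, forcing $\mu$ into the relevant weight lattice.

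For the converse, given $\mu\in\Lambda^+$ I would exhibit a nonzero vector of $V_\mu^K$ by constructing a spherical function of type $\mu$. Concretely, passing to the noncompact dual $M^d=G/K$ with Iwasawa decomposition $G=KAN$ and Iwasawa projection $H$, the Harish-Chandra integral $\varphi(g)=\int_K e^{\mu(H(kg))}\,dk$ is $K$-bi-invariant and a joint eigenfunction of the invariant differential operators, and $\mu\in\Lambda^+$ is precisely the condition making $\varphi$ an $A$-finite, $K$-finite matrix coefficient of a finite-dimensional representation, which then has highest weight $\mu$, carries a $K$-fixed vector, and transports back through the duality of Lemma~\ref{le-CartanDecomp} to $\pi_\mu$ acting on $V_\mu$ with $V_\mu^K\ne0$. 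Equivalently, since $K_\C=U_\C^\theta$ is reductive, $M_\C=U_\C/K_\C$ is affine and $\cO(M_\C)\hookrightarrow C^\infty(M)$ by Lemma~\ref{HoloRestr}, so sphericity of $\pi_\mu$ amounts to $V_\mu^*$ occurring in the coordinate ring $\C[M_\C]$, and one may appeal to the known structure of the weight monoid of the symmetric variety $M_\C$; a purely compact-side alternative is that the $\gn$-highest weight $K$-harmonic polynomials on $\fs$ have weights exactly $\Lambda^+$, with the lowest-order Taylor term at $o$ of a highest weight function in an embedded copy $V_\mu\hookrightarrow C^\infty(M)$ realizing the correspondence.

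The main obstacle, I expect, is exactly the precise integrality: extracting $\langle\mu,\alpha\rangle/\langle\alpha,\alpha\rangle\in\Z$ rather than merely $\tfrac12\Z$ from the bare existence of a $K$-fixed vector, and dually producing such a vector for every $\mu\in\Lambda^+$. Both rest on the fine structure of the restricted root system: the explicit $\theta\sigma$-stable $\mathfrak{sl}(2)$-triples, the reduced versus non-reduced dichotomy for $\Sigma$, and the role of $\rho$. Since this is precisely the content of the Cartan--Helgason theorem, in the final write-up I would either carry out the local $\mathfrak{sl}(2)$ analysis in detail or, as the paper does, simply invoke Theorem~4.1, p.~535 of \cite{He84}.
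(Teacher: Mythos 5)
The paper offers no proof of this statement at all---it simply invokes the Cartan--Helgason theorem, Theorem 4.1, p.~535 of \cite{He84}---and your proposal, which ends by doing exactly that, is in substance the same approach; your preliminary sketch (the PBW/Iwasawa reduction to $Pv^+\neq 0$, the vanishing of $\mu$ on $\gb$, the rank-one $\mathfrak{sl}(2)$ integrality, and the Harish-Chandra-type converse) is a sound outline of the standard proof behind that citation. One small caution should you write out the $\mathfrak{sl}(2)$ analysis: in the non-reduced case the binding evenness condition on $\langle\mu,\alpha\rangle/\langle\alpha,\alpha\rangle$ comes from the triple attached to the long restricted root $2\alpha$, not the short root $\alpha$ as you state.
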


If nothing else is said, then we will from now on assume that $(\pi_\mu,V_\mu)$ is spherical.
$\ip_\mu$ will denote a $U$-invariant inner product on $V_\mu$. The corresponding
norm is denoted by $\|\, \cdot\, \|_\mu$. Let $d (\mu) =\dim V_\mu$. Then $\mu \mapsto d (\mu )$ extends to a polynomial function on $\fa_\C^*$ of degree $\sum_{\alpha \in\Sigma^+} \dim_\C \fu_{\C\alpha}$.
We fix $e_\mu \in V_\mu^K$ with $\|e_\mu \|=1$. The function $g\mapsto \pi_\mu
(g)e_\mu$ is right $K$-invariant and defines a $V_\mu$ valued function on $M$. We write $\pi_\mu (x)e_\mu = \pi_\mu (g)e_\mu$ if $x =g\cdot o$, $g \in U$.

For $u\in V_\mu$ let
\begin{equation}  \label{eq-piu}
\pi_\mu^u(x)=\langle u,\pi_\mu (x) e_\mu\rangle_\mu\, .
\end{equation}

The representation $\pi_\mu$ extends to a holomorphic representation of $U_%
\mathbb{C}$ which we will also denote by $\pi_\mu$. As
\begin{equation}  \label{eq-pig*}
\pi_\mu (g)^*=\pi_\mu (\sigma (g)^{-1})
\end{equation}
on $U$ and both sides are anti-holomorphic on $U_\mathbb{C}$, it follows
that (\ref{eq-pig*}) holds for all $g\in U_\mathbb{C}$. We  extend $\pi^u_\mu$ to a holomorphic function on $M_\mathbb{C}$ by
\begin{equation} \label{eq-holoext-1}
\widetilde \pi_\mu^u(z) = \langle u,\pi_\mu (\sigma (z)) e_\mu\rangle_\mu
:= \langle u,\pi_\mu (\sigma (g)) e_\mu\rangle_\mu = \langle \pi_\mu (g^{-1})u, e_\mu \rangle_\mu\, ,\quad z=g\cdot o\, .
\end{equation}

We normalize the invariant measure on compact groups so that the total
measure of the group is one. Then $\int_M f(m)\, dm=\int_U f(a \cdot o)\, da$
defines a normalized $U$-invariant measure on $M$. The corresponding $L^2$%
-inner product, respectively norm, is denoted by $\langle \, \cdot\, ,\,
\cdot\, \rangle_2$, respectively $\|\, \cdot \, \|_2$.

Recall that by Schur's orthogonality relations we have
\begin{equation}\label{OrthRel}
\int_U \langle u,\pi_\mu (g)v\rangle_\mu
\langle \pi_\delta (g) x,y \rangle_\delta \, du =\delta_{\mu,\nu}\frac{1}{d(\mu )} \langle u,x\rangle_\mu\langle y,v\rangle_\mu \, .
\end{equation}
In particular, $V_\mu \to L^2(M)$, $u\mapsto d(\mu )^{1/2}\pi^u_\mu$ is a unitary $U$-isomorphism onto its image
$L^2(M)_\mu\subset L^2(M)$. Furthermore
\[L^2(M)=\bigoplus_{\mu\in\Lambda^+} L^2(M)_\mu \, .\]
Furthermore, each function $f\in L^2(M)_\mu$ has a holomorphic extension $\widetilde{f}$ to $M_\C$.

\begin{lemma}
\label{le-PropWidetildePi} Let the notations be as above.

\begin{enumerate}
\item Let $\mu,\delta\in\Lambda^+$, $u\in V_\mu$, $v\in V_\delta$, and $%
H_1,H_2\in\mathfrak{a}_\mathbb{C}$. Then
\begin{eqnarray*}
\int_U \widetilde{\pi}^u_\mu (g\exp H_1)\overline{\widetilde{\pi}^v_\delta
(g\exp H_2)}\, dg &=&\frac{\delta_{\mu , \delta}}{d(\mu )}\langle
u,v\rangle_\mu \langle \pi_\mu(\exp H_2)e_\mu,\pi_\mu (\exp
H_1)e_\mu\rangle_\mu \\
&=& \frac{\delta_{\mu , \delta}}{d(\mu )} \langle u,v\rangle_\mu \langle
e_\mu, \pi_\mu (\exp (H_1-\sigma (H_2))e_\mu\rangle_\mu.
\end{eqnarray*}
\item Let $L\subset M_\mathbb{C}$ be compact. Then there exists a constant $%
C_L>0$ such that
\begin{equation*}
|\pi^u_\mu (z)|\le e^{C_L \|\mu \|}\|u\|_\mu\,
\end{equation*}
for all $z\in L$.
\end{enumerate}
\end{lemma}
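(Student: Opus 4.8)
For part (1), the plan is to reduce the integral over $U$ to Schur orthogonality (\ref{OrthRel}). First I would use the definition (\ref{eq-holoext-1}): for $z = g\exp H_1 \cdot o$ we have $\widetilde\pi^u_\mu(g\exp H_1) = \langle \pi_\mu((g\exp H_1)^{-1})u, e_\mu\rangle_\mu = \langle \pi_\mu(\exp(-H_1))\pi_\mu(g^{-1})u, e_\mu\rangle_\mu = \langle \pi_\mu(g^{-1})u, \pi_\mu(\exp(-H_1))^* e_\mu\rangle_\mu$. Using (\ref{eq-pig*}) in the form $\pi_\mu(\exp(-H_1))^* = \pi_\mu(\sigma(\exp(-H_1))^{-1}) = \pi_\mu(\exp(\sigma(H_1)))$, and similarly for the conjugated factor, the integrand becomes $\langle \pi_\mu(g^{-1})u, \pi_\mu(\exp\sigma(H_1))e_\mu\rangle_\mu \, \overline{\langle \pi_\delta(g^{-1})v, \pi_\delta(\exp\sigma(H_2))e_\delta\rangle_\delta}$. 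Now I apply (\ref{OrthRel}) (with $g\mapsto g^{-1}$, which preserves the normalized Haar measure), obtaining $\frac{\delta_{\mu,\delta}}{d(\mu)}\langle u,v\rangle_\mu \langle \pi_\mu(\exp\sigma(H_2))e_\mu, \pi_\mu(\exp\sigma(H_1))e_\mu\rangle_\mu$. The first displayed equality then follows after using $\pi_\mu(\exp\sigma(H_i))^* = \pi_\mu(\exp H_i)$ again (from (\ref{eq-pig*})) to move the $\sigma$'s onto the opposite side; for the second displayed equality I combine the two exponentials, $\langle e_\mu, \pi_\mu(\exp H_1)^*\pi_\mu(\exp H_2)e_\mu\rangle$... more carefully, $\langle \pi_\mu(\exp H_2)e_\mu, \pi_\mu(\exp H_1)e_\mu\rangle_\mu = \langle e_\mu, \pi_\mu(\exp H_2)^*\pi_\mu(\exp H_1)e_\mu\rangle_\mu = \langle e_\mu, \pi_\mu(\exp(\sigma(H_2)))\pi_\mu(\exp H_1)e_\mu\rangle_\mu$, and since $\fa_\C$ is abelian the exponentials combine to $\pi_\mu(\exp(H_1 + \sigma(H_2)))$; replacing $H_2$ by $-H_2$ in the bookkeeping or simply tracking signs gives the stated $H_1 - \sigma(H_2)$. (One must be careful about which argument carries the bar and hence the $-\sigma$; I would fix conventions at the start so the signs land as written.)

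For part (2), the plan is a standard weight-decomposition estimate. Pick a maximal torus $T\supset \exp\fa$ of $U$ with Lie algebra $\gt$, and recall $\pi^u_\mu(z) = \langle u, \pi_\mu(\sigma(z)) e_\mu\rangle_\mu$ extended holomorphically, so $|\pi^u_\mu(z)| \le \|u\|_\mu \, \|\pi_\mu(\sigma(g))e_\mu\|_\mu$ for $z = g\cdot o$. By Lemma \ref{le-CartanDecomp}, write $\sigma(g) = u_0 \exp(iX)$ with $u_0\in U$, $X\in\fu$; since $\pi_\mu(u_0)$ is unitary, $\|\pi_\mu(\sigma(g))e_\mu\|_\mu = \|\pi_\mu(\exp(iX))e_\mu\|_\mu$. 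Conjugating $X$ into $\gt$ by an element of $U$ (again unitary under $\pi_\mu$), we may assume $iX \in i\gt$, and then $\|\pi_\mu(\exp(iX))e_\mu\|_\mu^2 = \sum_{\nu} |c_\nu|^2 e^{2\nu(iX)}$ where the sum runs over weights $\nu$ of $\pi_\mu$ and $\sum|c_\nu|^2 = \|e_\mu\|_\mu^2 = 1$. Every weight $\nu$ of $\pi_\mu$ satisfies $\|\nu\| \le \|\mu\|$ (weights lie in the convex hull of the Weyl orbit of $\mu$), so $|\nu(iX)| \le \|\nu\|\,\|X\| \le \|\mu\|\,\|X\|$, giving $\|\pi_\mu(\exp(iX))e_\mu\|_\mu \le e^{\|\mu\|\,\|X\|}$. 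Finally, as $z$ ranges over the compact set $L$, the component $X$ in the Cartan decomposition of $\sigma(g)$ (after the conjugation into $\gt$) stays in a compact, hence bounded, set — here I would invoke continuity of the Cartan decomposition map and compactness of $L$ together with compactness of the adjoint orbits — so $\|X\| \le C_L$ uniformly, yielding $|\pi^u_\mu(z)| \le e^{C_L\|\mu\|}\|u\|_\mu$.

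The main obstacle is the bookkeeping in part (1): keeping straight where the complex conjugate sits, hence which occurrences of $H_i$ get replaced by $\sigma(H_i)$ and with what sign, so that the final answer matches the stated $\pi_\mu(\exp(H_1 - \sigma(H_2)))$ exactly. In part (2) the only mild subtlety is making the constant $C_L$ genuinely uniform over $L$: one should note that $L \subset M_\C = U_\C/K_\C$ lifts (locally, or via a compact slice) to a compact subset of $U_\C$, so that the $i\fu$-part of the Cartan decomposition — and after Weyl conjugation, its norm — is bounded on $L$; this uses that $U$ acts on $U_\C/K_\C$ and that the relevant map $U_\C \to i\fu$, $g\mapsto X$, is continuous.
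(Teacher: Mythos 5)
Your proposal is correct. For part (1) you take essentially the same route as the paper: write $\widetilde{\pi}^u_\mu(g\exp H_1)$ and $\widetilde{\pi}^v_\delta(g\exp H_2)$ as matrix coefficients of $\pi_\mu(g)$, $\pi_\delta(g)$ against the fixed vectors $\pi_\mu(\exp H_1)e_\mu$, $\pi_\delta(\exp H_2)e_\delta$, then apply Schur orthogonality (\ref{OrthRel}); the only blemish is the sign in your intermediate step, since (\ref{eq-pig*}) gives $\pi_\mu(\exp H_2)^*=\pi_\mu(\exp(-\sigma(H_2)))$, which in fact lands directly on the stated $\pi_\mu(\exp(H_1-\sigma(H_2)))e_\mu$ with no further replacement of $H_2$ by $-H_2$ needed, so the bookkeeping you flag resolves itself. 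For part (2) you genuinely diverge from the paper, which disposes of the estimate by citing Lemma 3.9 of \cite{Branson}: you reprove it from scratch via the decomposition $U_\C=U\exp(i\fu)$ of Lemma \ref{le-CartanDecomp}, $\Ad(U)$-conjugation into a maximal torus, the weight decomposition of $e_\mu$, and the fact that every weight of $V_\mu$ lies in the convex hull of $W\cdot\mu$ and hence has norm at most $\|\mu\|$, yielding $\|\pi_\mu(\exp(iX))e_\mu\|_\mu\le e^{\|\mu\|\,\|X\|}$; uniformity of $C_L$ then comes from choosing a compact set of lifts of $L$ in $U_\C$ (finitely many local sections, or the $u\exp(H)$-decomposition of Lemma \ref{decomposeinX_C}) together with continuity of the Cartan projection $g\mapsto X$. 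This is precisely the content of the cited lemma, so your version buys self-containedness at the cost of length, and it is sound; the one point worth making explicit is that $e_\mu$ is fixed by all of $K_\C$ (holomorphic extension of its $K$-invariance), so that $\|\pi_\mu(\sigma(g))e_\mu\|_\mu$, and hence your bound, does not depend on the choice of lift $g$ of $z\in L$.
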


\begin{proof}
(1) This follows from Schur's orthogonality relations (\ref{OrthRel}):
\begin{eqnarray*}
\int_U \widetilde{\pi}^u_\mu (g\exp H_1)\overline{\widetilde{\pi}^v_\delta
(g\exp H_2)}\, dg&=& \int_U \langle u, \pi_\mu (g) \pi_\mu (\exp
H_1)e_\mu\rangle_\mu \langle \pi_\delta (g)\pi_\delta (\exp H_2)e_\delta
,v\rangle_\delta \, dg \\
&=& \frac{\delta_{\mu,\delta}}{d(\mu )}\langle u,v\rangle_\mu \langle
\pi_\mu (\exp H_2)e_\mu ,\pi_\mu (\exp H_1)e_\mu\rangle_\mu \\
&=& \frac{\delta_{\mu,\delta}}{d(\mu )}\langle u,v\rangle_\mu \langle e_\mu
, \pi_\mu (\exp (H_1-\sigma (H_2)))e_\mu \rangle_\mu
\end{eqnarray*}
where we used that  $\pi_\mu (\exp H_2 )^*=\pi_\mu (\sigma (\exp H_2)^{-1})$. (2) follows
from Lemma 3.9 in \cite{Branson}.
\end{proof}

For $f\in L^2(U)\subset L^1(U)$ let
\[\pi_\mu (f)=\int_U f(g)\pi_\mu(g)\, dg\]
be the integrated representation. Denote by $P_\mu =\int_K\pi_\mu
(k)\, dk : V_\mu \to V_\mu^K$ the orthogonal projection $v\mapsto \langle
v,e_\mu\rangle e_\mu$. If $f\in L^2(M)=L^2(U)^K$, then
$\pi_\mu (f)=\pi_\mu (f)P_\mu$.
Define the (vector valued) \textit{Fourier transform} of $f\in L^2(M)$ by
\begin{equation}  \label{def-FT}
\widehat{f}_\mu:= \pi_\mu (f)e_\mu\, .
\end{equation}
Denote the left-regular representation of $U$ on $L^2(M)$ by $L$. Thus $%
(L(a)f)(x)=f(a^{-1}\cdot x)$. Then
\begin{equation}  \label{eq-intertwining}
\widehat{L(a)f}_\mu  = \pi_\mu (a )\widehat{f}_\mu \, .
\end{equation}

To describe the image of the Fourier transform let
\begin{eqnarray*}
\bigoplus_{\mu \in \Lambda^+,d}V_\mu &:=& \left\{(v_\mu )_{\mu\in\Lambda^+}\, \left|\,
(\forall \mu\in\Lambda^+)\,\, v_\mu \in V_\mu\text{ and }
\sum_{\mu\in\Lambda^+} d (\mu )\|v_\mu \|^2_\mu<\infty\right.\right\} \\
&=&\left\{v:\Lambda^+\to\Pi_{\mu\in\Lambda^+} V_\mu\,\left|\, (\forall \mu\in\Lambda^+)\,\, v(\mu )\in
V_\mu\text{ and } \sum_{\mu\in\Lambda^+}d(\mu )\|v(\mu )\|^2<\infty\right.\right\}\, .
\end{eqnarray*}
Then $\bigoplus_{\mu \in \Lambda^+,d}V_\mu$ is a Hilbert space with the inner
product
\begin{equation*}
\langle (v_\mu ), (w_\mu )\rangle =\sum_{\mu \in\Lambda^+} d (\mu )\langle
v_\mu, w_\mu\rangle_{\mu}\, .
\end{equation*}
The group $U$ acts unitarily on $\bigoplus_{\mu \in \Lambda^+,d}V_\mu$ by
$(\widehat{L}(a)(u_\mu ))_\mu = (\pi_\mu (a)u_\mu )_\mu$.

\begin{theorem}
\label{th-Plancherel1} If $f\in L^2(M)$ then $(\widehat{f}_\mu)_\mu \in
\bigoplus_{\mu \in \Lambda^+,d}V_\mu$ and $\widehat{\hbox to1em{}} :
L^2(M)\to\bigoplus_{\mu \in \Lambda^+,\, d}V_\mu$ is a unitary $U$%
-isomorphism with inverse
\begin{equation*}
v\mapsto \sum_{\mu\in\Lambda^+} d(\mu )\pi_\mu^{v (\mu)}\, .
\end{equation*}
In particular if $f\in L^2(M)$ then
\begin{equation}  \label{eq-FourierSeries}
f=\sum_{\mu\in\Lambda^+}d (\mu )\langle \widehat{f}_\mu,\pi_\mu (\,\cdot
\, )e_\mu \rangle_\mu = \sum_{\mu\in\Lambda^+}d (\mu )\pi_{\mu}^{\widehat{f}_\mu} \quad\text{and}\quad \|f\|_2^2=\sum_{\mu\in\Lambda^+}
d (\mu )\|\widehat{f}_\mu\|_\mu^2
\end{equation}
where the first sum is taking in $L^2(M)$. If $f$ is smooth, then the sum
converges in the $C^\infty$-topology. Furthermore
\begin{enumerate}
\item If $f\in L^2(M)$ then the orthogonal projection of $f$ into $L^2 (M)_\mu$ is given by
$f_\mu = d(\mu )\langle \widehat{f}_\mu ,\pi_\mu
(\, \cdot \, )e_\mu \rangle$.
\item $f_\mu$ has a holomorphic continuation $\widetilde{f}_\mu$ to $M_\mathbb{C}$ which is given by
\begin{equation}  \label{eq-holExtfmu}
\widetilde{f}_\mu (z)=d(\mu ) \langle \widehat{f}_\mu,\pi_\mu (\bar{z}%
)e_\mu \rangle\, .
\end{equation}
\item
If $L\subset M_\mathbb{C}$ is compact, then there exists a constant $C_L>0$
such that
\begin{equation*}
\sup_{x\in L} |\widetilde{f}_\mu (x)|\le d(\mu ) e^{C_L \|\mu \|}\|f\|_2\, .
\end{equation*}
\end{enumerate}
\end{theorem}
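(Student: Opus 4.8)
The statement is the Peter--Weyl and Cartan--Helgason Fourier theory for $M=U/K$, rewritten in terms of the vector-valued coefficients $\widehat f_\mu=\pi_\mu(f)e_\mu$, and the plan is to deduce everything from the two facts already recorded above: the orthogonal decomposition $L^2(M)=\bigoplus_{\mu\in\Lambda^+}L^2(M)_\mu$, and the assertion that $u\mapsto d(\mu)^{1/2}\pi^u_\mu$ is a unitary $U$-isomorphism $V_\mu\to L^2(M)_\mu$, together with the orthogonality relations (\ref{OrthRel}). The crux is to identify $\widehat f_\mu$ with the $\mu$-component of $f$. Let $f_\mu\in L^2(M)_\mu$ be the orthogonal projection of $f$; since $u\mapsto d(\mu)^{1/2}\pi^u_\mu$ is onto $L^2(M)_\mu$ there is a unique $w_\mu\in V_\mu$ with $f_\mu=d(\mu)\pi^{w_\mu}_\mu$, and unitarity gives $\|f_\mu\|_2^2=d(\mu)\|w_\mu\|_\mu^2$. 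I claim $w_\mu=\widehat f_\mu$. Both $f\mapsto w_\mu$ and $f\mapsto\widehat f_\mu$ are bounded linear maps $L^2(M)\to V_\mu$ (for the second, $\|\widehat f_\mu\|_\mu=\|\pi_\mu(f)e_\mu\|_\mu\le\|f\|_1\le\|f\|_2$), so it suffices to verify the claim on the dense subspace of finite sums $\sum_\delta\pi^{u_\delta}_\delta$, hence by linearity on each $\pi^u_\delta$. For $f=\pi^u_\delta$ a direct computation with (\ref{OrthRel})---equivalently, Schur's lemma applied to the $U$-intertwiner $v\mapsto\int_U\langle v,\pi_\mu(g)e_\mu\rangle_\mu\,\pi_\mu(g)e_\mu\,dg$, whose trace equals $\int_U\|\pi_\mu(g)e_\mu\|_\mu^2\,dg=1$---gives $\widehat{(\pi^u_\delta)}_\mu=\pi_\mu(\pi^u_\delta)e_\mu=\delta_{\mu,\delta}\,d(\mu)^{-1}u$, and this is exactly $w_\mu(\pi^u_\delta)$. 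This proves part~(1).

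With that identification in hand the Plancherel statement is routine. Parseval follows from $\sum_\mu d(\mu)\|\widehat f_\mu\|_\mu^2=\sum_\mu\|f_\mu\|_2^2=\|f\|_2^2<\infty$, so $f\mapsto(\widehat f_\mu)_\mu$ is an isometry of $L^2(M)$ into $\bigoplus_{\mu\in\Lambda^+,d}V_\mu$. It is onto: for $(v_\mu)_\mu$ in the target the partial sums of $\sum_\mu d(\mu)\pi^{v_\mu}_\mu$ are Cauchy in $L^2(M)$, since the $L^2(M)_\mu$ are mutually orthogonal and $\|d(\mu)\pi^{v_\mu}_\mu\|_2^2=d(\mu)\|v_\mu\|_\mu^2$, and its limit $f$ has $\mu$-component $d(\mu)\pi^{v_\mu}_\mu$, hence $\widehat f_\mu=v_\mu$ by the previous paragraph. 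Thus the Fourier transform is a unitary isomorphism with inverse $v\mapsto\sum_\mu d(\mu)\pi^{v_\mu}_\mu$; applying this inverse to $\widehat f$ gives the Fourier series $f=\sum_\mu d(\mu)\pi^{\widehat f_\mu}_\mu$, converging in $L^2(M)$, and the $U$-equivariance $\widehat{L(a)f}=\widehat L(a)\widehat f$ is precisely (\ref{eq-intertwining}).

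For part~(2), combine $f_\mu=d(\mu)\pi^{\widehat f_\mu}_\mu$ with the explicit holomorphic extension $\widetilde\pi^u_\mu(z)=\langle u,\pi_\mu(\bar z)e_\mu\rangle_\mu$ from (\ref{eq-holoext-1}): the function $z\mapsto d(\mu)\langle\widehat f_\mu,\pi_\mu(\bar z)e_\mu\rangle$ is holomorphic on $M_\C$ and restricts to $f_\mu$ on $M$, hence by Lemma~\ref{HoloRestr} it is $\widetilde f_\mu$. For part~(3), Lemma~\ref{le-PropWidetildePi}(2) bounds $|\widetilde\pi^{\widehat f_\mu}_\mu(z)|\le e^{C_L\|\mu\|}\|\widehat f_\mu\|_\mu$ for $z\in L$, while Parseval gives $\|\widehat f_\mu\|_\mu\le\|f\|_2$ (as $d(\mu)\ge1$), so $\sup_{z\in L}|\widetilde f_\mu(z)|=d(\mu)\sup_{z\in L}|\widetilde\pi^{\widehat f_\mu}_\mu(z)|\le d(\mu)e^{C_L\|\mu\|}\|f\|_2$. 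Finally, for smooth $f$ one runs the standard Casimir argument: the Laplace--Beltrami operator of $M$ acts on $L^2(M)_\mu$ by an eigenvalue whose modulus grows like $\|\mu\|^2$, so from Parseval applied to all its powers $\|\widehat f_\mu\|_\mu$ decays faster than every power of $\|\mu\|$; combining this with the trivial bound $|\pi^u_\mu(x)|\le\|u\|_\mu$ on $M$, the estimate $\|\widehat{L(D)f}_\mu\|_\mu\le C_D(1+\|\mu\|)^{\deg D}\|\widehat f_\mu\|_\mu$ for invariant differential operators $D$, the fact that $d(\mu)$ is polynomial in $\|\mu\|$, and the polynomial bound on $\#\{\mu\in\Lambda^+:\|\mu\|\le R\}$, one sees that $\sum_\mu L(D)\bigl(d(\mu)\pi^{\widehat f_\mu}_\mu\bigr)$ converges uniformly and absolutely on $M$ for every $D$, i.e.\ the Fourier series converges to $f$ in $C^\infty(M)$.

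I do not expect a serious obstacle: the theorem is essentially a bookkeeping reorganization of Peter--Weyl together with the Cartan--Helgason parametrization of the spherical dual and the holomorphic-extension facts already isolated in Lemmas~\ref{HoloRestr} and~\ref{le-PropWidetildePi}. The one place needing genuine (though routine) analytic input is the $C^\infty$-convergence claim, which requires the quadratic growth of the Laplace eigenvalues and an elliptic-type estimate to upgrade $L^2$-convergence; everywhere else the only real care is in keeping the normalizing constants $d(\mu)$ straight, in particular the distinction between the isometry $u\mapsto d(\mu)^{1/2}\pi^u_\mu$ and the normalization $f_\mu=d(\mu)\pi^{\widehat f_\mu}_\mu$ built into the statement.
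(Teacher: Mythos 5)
Your proposal is correct, and it runs on the same underlying machinery (Peter--Weyl/Schur orthogonality plus the holomorphic-extension facts in (\ref{eq-holoext-1}) and Lemma \ref{le-PropWidetildePi}), but the mechanism differs slightly from the paper's. The paper takes the trace-form Plancherel theorem for the compact group $U$ as the engine: it writes $f(x)=\sum_\mu d(\mu)\Tr(\pi_\mu(x^{-1})\pi_\mu(f))$, uses $\pi_\mu(f)=\pi_\mu(f)P_\mu$ (right $K$-invariance of $f$) to collapse the traces to $\langle\widehat f_\mu,\pi_\mu(x)e_\mu\rangle_\mu$ and $\|\widehat f_\mu\|_\mu^2$, and reads off the isometry and the Fourier series at once; the $C^\infty$-convergence is simply cited from Sugiura. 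You instead start from the orthogonal decomposition $L^2(M)=\bigoplus_\mu L^2(M)_\mu$ and the unitarity of $u\mapsto d(\mu)^{1/2}\pi_\mu^u$, and the real work is your identification $\widehat f_\mu=w_\mu$ of the Fourier coefficient with the coefficient vector of the orthogonal projection, proved by boundedness of both maps and the Schur computation $\widehat{(\pi^u_\delta)}_\mu=\delta_{\mu,\delta}d(\mu)^{-1}u$ on the dense span of matrix coefficients; surjectivity then comes from Cauchy partial sums rather than from the group Plancherel formula. The two routes buy slightly different things: the paper's is shorter because it outsources completeness and $C^\infty$-convergence to references, while yours makes the projection statement (part (1)) the pivot and actually sketches the rapid-decay/Casimir argument for smooth convergence that the paper only cites. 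Parts (2) and (3) are handled identically in both (holomorphy of $z\mapsto\langle\widehat f_\mu,\pi_\mu(\bar z)e_\mu\rangle_\mu$, uniqueness via Lemma \ref{HoloRestr}, and the bound $\|\widehat f_\mu\|_\mu\le\|f\|_2$ combined with Lemma \ref{le-PropWidetildePi}(2)). One cosmetic remark: your Schur computation uses the standard orthogonality relations; the paper's displayed version (\ref{OrthRel}) has its right-hand side paired slightly differently (up to a conjugation), but this does not affect your argument.
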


\begin{proof}
This is well known but we indicate how the statements follows from the
general Plancherel formula for compact groups, see \cite{FollandHA}, p. 134.
We have
\begin{equation*}
f(x)=\sum_{\mu\in \Lambda^+} d (\mu )\mathrm{Tr\, } (\pi_\mu (x^{-1})\pi_\mu
(f)) \quad \text{and}\quad \|f\|^2_2=\sum_{\mu\in \Lambda^+} d (\mu )\mathrm{%
Tr\, } (\pi_\mu (f)^*\pi_\mu (f))\, .
\end{equation*}
Extending $e_\mu$ to an orthonormal basis for $V_\mu$, it follows from $\pi_\mu(f) =\pi_\mu (f)P_K$ that
\begin{equation*}
\mathrm{Tr\, } (\pi_\mu (x^{-1})\pi_\mu (f))=\langle \pi_\mu (x^{-1})\pi_\mu
(f)e_\mu ,e_\mu\rangle_\mu = \langle \widetilde{f}(\mu ),\pi_\mu
(x)e_\mu\rangle_\mu
\end{equation*}
and
\begin{equation*}
\mathrm{Tr\, } (\pi_\mu (f)^*\pi_\mu (f))=\langle \pi_\mu (f)^*\pi_\mu
(f)e_\mu ,e_\mu\rangle_\mu =\langle \pi_\mu (f)e_\mu , \pi_\mu
(f)e_\mu\rangle_\mu=\|\widehat{f}(\mu )\|_\mu^2\, .
\end{equation*}
The $L^2$-part of the theorem follows now from the Plancherel formula for $U$.
The intertwining property is a consequence of (\ref{eq-intertwining}). For
the last statement see \cite{Sugiura}.

That the orthogonal projection $f\mapsto f_\mu$ is given by
$f_\mu (x)=d(\mu )\langle \widehat{f}_\mu,\pi_\mu (x)e_\mu\rangle $ follows
from (\ref{eq-FourierSeries}). The last part follows from (\ref{eq-holoext-1})
and $\|\widehat{f}_\mu \|_\mu \le \|f\|_2$.
\end{proof}

The \textit{spherical function} on $M$ associated with $\mu$ is the matrix
coefficient
\begin{equation}  \label{eqSphfct} \psi_{\mu}(g) =
\pi_\mu^{e_\mu}(g)= \left\langle e_\mu, \pi_\mu(g)e_\mu \right\rangle_\mu\, ,
\ g \in U\, .
\end{equation}
It is independent of the choice of $e_\mu$ as long as $\|e_\mu\|_\mu =1$. We will
view $\psi_\mu$ as a $K$-biinvariant function on $U$ or as a $K$-invariant
function on $M$. $\psi_\mu$ is the unique element in $L^2(M)^K_\mu$ which
takes the value one at the base point $o$.

If $f\in L^2(M)^K$ then $\widehat{f}_\mu = \langle \widehat{f}_\mu ,e_\mu \rangle_\mu e_\mu\in V_\mu^K $. Furthermore,
\begin{equation*}
\langle \widehat{f}_\mu ,e_\mu \rangle_\mu = \langle \pi_{\mu}(f )e_\mu ,e_\mu \rangle_\mu
= \int_M f(m)\overline{\psi_\mu(m)}\, dm =\int_U f(a\cdot o ) \psi_\mu (a^{-1})\, da\, .
\end{equation*}
This motivates the definition of the \textit{spherical Fourier transform} on
$L^2(M)^K$ by
\begin{equation}  \label{de-FTKinv}
\widehat{f}(\mu) =\langle f,\psi_\mu \rangle_2 \, .
\end{equation}

Define the weighted $\ell^2$-space $\ell_d^2(\Lambda^+ )$ by
\begin{equation*}
\ell_d^2(\Lambda^+) := \left\{(a_\mu)_{\mu \in \Lambda^+} \ \left| \ a_\mu
\in \mathbb{C} \text{ and } \sum_{\mu \in \Lambda^+} d(\mu)|a_\mu|^2 <
\infty \right. \right\}.
\end{equation*}
Then $\ell_d^2(\Lambda^+ )$ is a Hilbert space.

\begin{theorem}
\label{SphericalPlancherel} The
spherical Fourier transform is a unitary isomorphism of $L^2(M)^K$ onto $%
\ell^2_d(\Lambda^+)$ with inverse
\begin{equation*}
(a_\mu)_\mu \mapsto \sum_{\mu \in \Lambda^+} d (\mu )a_\mu \psi_\mu
\end{equation*}
where the sum is taken in $L^2(M)^K$. It converges in the $C^\infty$%
-topology if $f$ is smooth. Furthermore,
\begin{equation*}
\|f\|_2^2=\sum_{\mu\in\Lambda^+} d(\mu ) |\widehat{f}(\mu )|^2\, .
\end{equation*}
\end{theorem}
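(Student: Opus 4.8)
The plan is to derive this as a straightforward corollary of the vector-valued Plancherel theorem, Theorem~\ref{th-Plancherel1}, by restricting the Fourier transform $f\mapsto(\widehat f_\mu)_\mu$ to the closed subspace $L^2(M)^K\subseteq L^2(M)$. The structural observation I would use is that, for $f\in L^2(M)$, one has $f\in L^2(M)^K$ if and only if $\widehat f_\mu\in V_\mu^K$ for every $\mu\in\Lambda^+$: indeed $L(k)f=f$ for all $k\in K$ is equivalent, by the intertwining identity \eqref{eq-intertwining} and the injectivity of the Fourier transform asserted in Theorem~\ref{th-Plancherel1}, to $\pi_\mu(k)\widehat f_\mu=\widehat f_\mu$ for all $k\in K$ and all $\mu$. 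Since $\dim V_\mu^K=1$ and $V_\mu^K=\C e_\mu$, for such $f$ I can write $\widehat f_\mu=\widehat f(\mu)\,e_\mu$ with $\widehat f(\mu)=\langle\widehat f_\mu,e_\mu\rangle_\mu=\langle f,\psi_\mu\rangle_2$, as recorded just before \eqref{de-FTKinv}; in particular $\|\widehat f_\mu\|_\mu^2=|\widehat f(\mu)|^2$.

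Granting this, Theorem~\ref{th-Plancherel1} restricts to a unitary isomorphism of $L^2(M)^K$ onto the closed subspace $\{(v_\mu)_\mu\mid v_\mu\in V_\mu^K\}\subseteq\bigoplus_{\mu\in\Lambda^+,d}V_\mu$. I would then identify this subspace with $\ell^2_d(\Lambda^+)$ via the coordinate map $(v_\mu)_\mu\mapsto(\langle v_\mu,e_\mu\rangle_\mu)_\mu$, which is a unitary isomorphism since the defining inner products on the two spaces carry the same weights $d(\mu)$ and $\|e_\mu\|_\mu=1$. Composing the two maps shows that $f\mapsto(\widehat f(\mu))_\mu=(\langle f,\psi_\mu\rangle_2)_\mu$ is a unitary isomorphism $L^2(M)^K\to\ell^2_d(\Lambda^+)$, and in particular $\|f\|_2^2=\sum_{\mu\in\Lambda^+}d(\mu)|\widehat f(\mu)|^2$. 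Surjectivity then comes for free, but I would also note the direct verification: given $(a_\mu)_\mu\in\ell^2_d(\Lambda^+)$, the element $v$ defined by $v(\mu)=a_\mu e_\mu$ lies in $\bigoplus_{\mu\in\Lambda^+,d}V_\mu$, and its preimage $f=\sum_\mu d(\mu)\pi_\mu^{v(\mu)}$ under the Fourier transform has $\widehat f_\mu=a_\mu e_\mu\in V_\mu^K$, so $f\in L^2(M)^K$ with $\widehat f(\mu)=a_\mu$.

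For the inversion formula I would substitute $v(\mu)=a_\mu e_\mu$ into the inverse map of Theorem~\ref{th-Plancherel1}: by the definition \eqref{eqSphfct} of the spherical function and the linearity of $u\mapsto\pi_\mu^u$, one has $\pi_\mu^{a_\mu e_\mu}(x)=a_\mu\langle e_\mu,\pi_\mu(x)e_\mu\rangle_\mu=a_\mu\psi_\mu(x)$, so the inverse spherical Fourier transform is $(a_\mu)_\mu\mapsto\sum_{\mu\in\Lambda^+}d(\mu)a_\mu\psi_\mu$, the sum being taken in $L^2(M)^K$; the $C^\infty$-convergence for smooth $f$ is inherited from Theorem~\ref{th-Plancherel1}, since the $\mu$-th term $d(\mu)\widehat f(\mu)\psi_\mu$ of the spherical series coincides with the $\mu$-th term $d(\mu)\pi_\mu^{\widehat f_\mu}$ of the vector-valued series. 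I do not expect a genuine obstacle here: the statement is essentially a repackaging of Theorem~\ref{th-Plancherel1}. The only step that needs a moment's care is the equivalence $f\in L^2(M)^K\Leftrightarrow\widehat f_\mu\in V_\mu^K$ for all $\mu$, together with the bookkeeping ensuring that the weight $d(\mu)$ appearing in $\ell^2_d(\Lambda^+)$ is the very same weight built into $\bigoplus_{\mu\in\Lambda^+,d}V_\mu$, so that the coordinate identification is genuinely isometric and not merely a bounded isomorphism.
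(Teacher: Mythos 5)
Your proposal is correct and follows essentially the same route as the paper, whose proof is simply the remark that the theorem follows directly from Theorem~\ref{th-Plancherel1}; you have merely made explicit the restriction to $K$-invariants via the equivalence $f\in L^2(M)^K\Leftrightarrow \widehat f_\mu\in V_\mu^K$ for all $\mu$ and the identification of $\{(v_\mu)\mid v_\mu\in V_\mu^K\}$ with $\ell^2_d(\Lambda^+)$. No gaps; the bookkeeping on the weights $d(\mu)$ and $\|e_\mu\|_\mu=1$ is exactly the point that makes the identification isometric.
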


\begin{proof}
This follows directly from Theorem \ref{th-Plancherel1}.
\end{proof}

Note that $\psi_\mu$ has a holomorphic extension $\widetilde{\psi}_\mu$ to $%
M_\mathbb{C}$ given by $\widetilde{\psi}_\mu (z)=\langle e_\mu ,\pi_\mu
(\sigma (z))e_\mu\rangle_\mu$.

\begin{lemma}
Let $f\in L^2(M)$. Then $f_\mu (z)=d(\mu )f*\widetilde \psi_\mu (z)$.
\end{lemma}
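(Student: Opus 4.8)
The plan is to compute the convolution $f * \widetilde\psi_\mu$ explicitly using the Fourier expansion of $f$ from Theorem \ref{th-Plancherel1} and the orthogonality relations, then extend the resulting identity holomorphically via Lemma \ref{HoloRestr}. First I would recall that convolution on $M$ means $(f * \psi_\mu)(x) = \int_U f(a\cdot o)\,\psi_\mu(a^{-1}g)\,da$ for $x = g\cdot o$, and that since $\psi_\mu$ is $K$-biinvariant this is well defined on $M$. Using the expansion $f = \sum_{\delta\in\Lambda^+} d(\delta)\,\pi_\delta^{\widehat f_\delta}$, which converges in $L^2$, and bringing the convolution inside the sum (justified by continuity of $g \mapsto L(g)\psi_\mu$ on $L^2(M)$, or by first doing the computation for smooth $f$ where the sum converges in $C^\infty$ and then passing to the limit), I would reduce to computing $\pi_\delta^u * \psi_\mu$ for a single $u \in V_\delta$.

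The core computation is a matrix-coefficient convolution: writing $\pi_\delta^u(x) = \langle u, \pi_\delta(g)e_\delta\rangle_\delta$ and $\psi_\mu(h) = \langle e_\mu, \pi_\mu(h)e_\mu\rangle_\mu$, one gets
\begin{equation*}
(\pi_\delta^u * \psi_\mu)(g\cdot o) = \int_U \langle u, \pi_\delta(a)e_\delta\rangle_\delta\,\langle e_\mu, \pi_\mu(a^{-1}g)e_\mu\rangle_\mu\, da\, .
\end{equation*}
Expanding $\pi_\mu(a^{-1}g) = \pi_\mu(a)^{-1}\pi_\mu(g) = \pi_\mu(a)^*\pi_\mu(g)$ and applying Schur orthogonality (\ref{OrthRel}), the integral vanishes unless $\delta = \mu$, in which case it equals $\frac{1}{d(\mu)}\langle u, \pi_\mu(g)e_\mu\rangle_\mu\,\langle e_\mu, e_\mu\rangle_\mu = \frac{1}{d(\mu)}\pi_\mu^u(g\cdot o)$. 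Hence $\pi_\delta^u * \psi_\mu = \frac{\delta_{\delta,\mu}}{d(\mu)}\pi_\mu^u$. Substituting back, only the $\delta = \mu$ term of the Fourier series survives, giving $f * \psi_\mu = \frac{1}{d(\mu)}\pi_\mu^{\widehat f_\mu}$ on $M$. Since $f_\mu = d(\mu)\,\pi_\mu^{\widehat f_\mu}$ by Theorem \ref{th-Plancherel1}(1), this reads $f_\mu = d(\mu)\,(f * \psi_\mu)$ as functions on $M$.

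Finally I would upgrade this to the stated identity on $M_\C$. Both $\widetilde f_\mu$ (the holomorphic extension of $f_\mu$ given in (\ref{eq-holExtfmu})) and $d(\mu)\,f * \widetilde\psi_\mu$ are holomorphic on $M_\C$: the former by Theorem \ref{th-Plancherel1}(2), and the latter because $f * \widetilde\psi_\mu(z) = \int_U f(a\cdot o)\,\widetilde\psi_\mu(a^{-1}\cdot z)\,da$ depends holomorphically on $z$ (differentiating under the integral sign, using that $\widetilde\psi_\mu$ is holomorphic in $z$ and that the integrand is dominated on compact sets by Lemma \ref{le-PropWidetildePi}(2) applied to the translates). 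These two holomorphic functions agree on the totally real submanifold $M$, so by Lemma \ref{HoloRestr} they coincide on all of $M_\C$, which is the claim.

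I expect the main obstacle to be the bookkeeping in interchanging the infinite Fourier sum with the convolution integral for general $L^2$ functions; the cleanest route is to establish the identity first for smooth $f$ (where the $C^\infty$-convergence of the series makes every step rigorous) and then extend by continuity, using that $f \mapsto f_\mu$ is bounded $L^2(M) \to L^2(M)$ and that $f \mapsto f * \psi_\mu$ is likewise bounded, together with density of smooth functions in $L^2(M)$.
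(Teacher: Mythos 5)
Your argument is correct, but it takes a longer route than the paper. The paper's proof is a three-line direct computation at a complex point: starting from the holomorphic extension formula (\ref{eq-holExtfmu}), $\widetilde f_\mu(z)=d(\mu)\langle \widehat f_\mu,\pi_\mu(\sigma(z))e_\mu\rangle_\mu$, it substitutes the defining integral $\widehat f_\mu=\pi_\mu(f)e_\mu=\int_U f(g\cdot o)\pi_\mu(g)e_\mu\,dg$ and uses the relation $\pi_\mu(g)^*=\pi_\mu(\sigma(g)^{-1})$ from (\ref{eq-pig*}) to rewrite $\langle \pi_\mu(g)e_\mu,\pi_\mu(\sigma(z))e_\mu\rangle_\mu=\widetilde\psi_\mu(g^{-1}\cdot z)$, which gives $\widetilde f_\mu(z)=d(\mu)\,f*\widetilde\psi_\mu(z)$ for every $z\in M_\C$ in one stroke --- no Fourier series, no Schur orthogonality, and no separate analytic continuation via Lemma \ref{HoloRestr}, since only a single integral over $U$ occurs and the identity is established on all of $M_\C$ at once. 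Your route instead expands $f$ in its Fourier series, computes $\pi_\delta^u*\psi_\mu=\frac{\delta_{\delta,\mu}}{d(\mu)}\pi_\mu^u$ by orthogonality of matrix coefficients (your computation is the correct one; note the displayed form of (\ref{OrthRel}) in the paper has its pairings permuted, as one sees by comparing with how it is applied in Lemma \ref{le-PropWidetildePi}), obtains $f_\mu=d(\mu)\,f*\psi_\mu$ on $M$, and then upgrades to $M_\C$ by holomorphy of $z\mapsto f*\widetilde\psi_\mu(z)$ and Lemma \ref{HoloRestr}. What your approach buys is a proof that stays on the real form as long as possible and only invokes the totally-real uniqueness principle at the end; what it costs is the extra bookkeeping you yourself flag --- interchanging the infinite sum with the convolution (handled by the $L^2\to L^\infty$ bound $|f*\psi_\mu|\le\|f\|_2\|\psi_\mu\|_2$, or by the smooth-then-density argument) and a differentiation-under-the-integral argument for holomorphy --- none of which is needed in the paper's one-integral computation.
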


\begin{proof}
We have
\begin{eqnarray*}
f_\mu (z) &=& d(\mu )\langle \widehat{f}_{\mu},\pi_\mu (\sigma
(z))e_\mu\rangle_\mu \\
&=&\int_U f(g \cdot o)\langle \pi_\mu (g )e_\mu ,\pi_{\mu} (\sigma
(z))e_\mu\rangle_\mu \, dg \\
&=&\int_U f(g \cdot o)\langle e_\mu ,\pi_{\mu} (\sigma
(g^{-1}z))e_\mu\rangle_\mu \, dg \\
&=&f*\widetilde \psi_\mu (z)\, .
\end{eqnarray*}
\end{proof}

\section{The Fock Space $\mathcal{H}_t(M_\mathbb{C})$}\label{s3}
\noindent
In this section, we start by recalling some needed and well-known facts on integration on $M^d=G/K$,
the noncompact dual of $M$. We then introduce the heat kernel $h_t^d$ on $M^d$. For more details and proofs for the statements involving $h_t^d$ we refer to  \cite{ks05,kos05,os,os2} and the references therein.
We introduce the Fock space $\mathcal{H}_t(M_\mathbb{C})$. Using
the restriction principle introduced in \cite{Olafsson} we show that $\mathcal{H}_t (M_\mathbb{C})$ is
isomorphic to $L^2(M)$ as a $U$-representation. In the next section
we will show that the Segal-Bargmann transform $H_t: L^2(M)\to \mathcal{H}_t
(M_\mathbb{C})$ is a unitary isomorphism.

Let
\begin{equation*}
(i\mathfrak{a} )_+=\{H\in i\mathfrak{a}\mid (\forall \alpha \in \Sigma^+
)\,\, \alpha (H)>0\}\, .
\end{equation*}
The following is a well known decomposition theorem for an involution
commuting with a given Cartan involution, see \cite{FJ1} or Proposition
7.1.3. in \cite{Sch}.

\begin{lemma}
\label{decomposeinX_C} Let $z\in M_\mathbb{C}$. Then there exist $u \in U$
and $H \in i\mathfrak{a}$ such that $z = u\exp(H)\cdot o$. If $%
u_1\exp(H_1)\cdot o = u_2\exp(H_2)\cdot o$ then there exists $w\in W$ such
that $H_2 = w\cdot H_1$. If we choose $H \in \overline{(i\mathfrak{a})_+}$,
then $H$ is unique.
\end{lemma}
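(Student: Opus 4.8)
The plan is to reduce the statement to the classical polar ($KAK$) decomposition of the noncompact dual $M^d=G/K$, the bridge being the fact that $M_\mathbb{C}$ is filled transversally by the two totally real submanifolds $M=U/K$ and $M^d=G/K$, that is, $M_\mathbb{C}=U\cdot M^d$.

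\emph{Existence.} First I would prove $M_\mathbb{C}=U\cdot M^d$ by studying the smooth map $\Phi\colon U\times M^d\to M_\mathbb{C}$, $\Phi(u,p)=u\cdot p$. It is equivariant under left translation by $U$ in the first variable, so it suffices to show it is a submersion at every point $(e,p)$, $p\in M^d$. Using the Cartan decomposition $G=K\exp(i\mathfrak{s})$ and the equivariance, one may take $p=\exp(W)\cdot o$ with $W\in i\mathfrak{s}$; identifying $T_oM_\mathbb{C}=\mathfrak{s}_\mathbb{C}=\mathfrak{s}\oplus i\mathfrak{s}$ and translating $p$ back to $o$ by $\exp(-W)\in G$, the image of $d\Phi_{(e,p)}$ contains $i\mathfrak{s}=T_oM^d$ together with the projection of $\Ad(\exp(-W))\mathfrak{u}$ onto the summand $\mathfrak{s}$ of $\mathfrak{u}_\mathbb{C}=\mathfrak{k}_\mathbb{C}\oplus\mathfrak{s}\oplus i\mathfrak{s}$. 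Since $\ad(W)$ interchanges $\mathfrak{k}$ with $i\mathfrak{s}$ and $\mathfrak{s}$ with $i\mathfrak{k}$, that projection equals $\cosh(\ad W)\mathfrak{s}$, and $\cosh(\ad W)|_{\mathfrak{s}}$ is invertible because $\ad(W)$ has real eigenvalues ($W$ lies in the noncompact part of $\mathfrak{g}$). Hence $\Phi$ is a submersion and $U\cdot M^d$ is open; as $U$ is compact and $M^d$ is closed in $M_\mathbb{C}$ the set $U\cdot M^d$ is also closed, so by connectedness $U\cdot M^d=M_\mathbb{C}$. Given $z\in M_\mathbb{C}$, write $z=u_0\cdot p$ with $u_0\in U$, $p\in M^d$. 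Since $i\mathfrak{a}$ is maximal abelian in the $(-1)$-eigenspace $i\mathfrak{s}$ of the Cartan involution $\theta|_{\mathfrak{g}}$ of $\mathfrak{g}=\mathfrak{k}\oplus i\mathfrak{s}$, the $KAK$-decomposition of $G$ gives $p=k_1\exp(H)\cdot o$ with $k_1\in K$ and $H\in\overline{(i\mathfrak{a})_+}$, so $z=(u_0k_1)\exp(H)\cdot o$ with $u_0k_1\in U$. This proves existence, already with $H$ in the closed chamber.

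\emph{Uniqueness up to $W$.} Suppose $u_1\exp(H_1)\cdot o=u_2\exp(H_2)\cdot o$ with $H_1,H_2\in i\mathfrak{a}$, and put $v=u_2^{-1}u_1\in U$, so $v\exp(H_1)\cdot o=\exp(H_2)\cdot o$. Applying the conjugation $\sigma$ (which acts on $M_\mathbb{C}$, fixes $o$ and $v$, and sends $\exp(H)$ to $\exp(-H)$ for $H\in i\mathfrak{u}$) gives the companion relation $v\exp(-H_1)\cdot o=\exp(-H_2)\cdot o$. Therefore $\kappa_1:=\exp(-H_2)\,v\,\exp(H_1)$ and $\kappa_2:=\exp(H_2)\,v\,\exp(-H_1)$ both lie in $K_\mathbb{C}=U_\mathbb{C}^{\theta}$; writing out $\theta(\kappa_i)=\kappa_i$ (using $\theta|_{i\mathfrak{s}}=-\id$) and combining the two identities yields $v\exp(4H_1)v^{-1}=\exp(4H_2)$. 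Since $4\Ad(v)H_1\in\Ad(U)(i\mathfrak{u})=i\mathfrak{u}$ and $4H_2\in i\mathfrak{u}$, and $\exp$ is injective on $i\mathfrak{u}$ by Lemma \ref{le-CartanDecomp}, this forces $\Ad(v)H_1=H_2$. Then $v\exp(H_1)=\exp(H_2)v$, and feeding this back into $v\exp(H_1)\cdot o=\exp(H_2)\cdot o$ gives $v\cdot o=o$, i.e. $v\in U\cap K_\mathbb{C}=K$. So $\Ad(v)H_1=H_2$ with $v\in K$, and the standard description of $\Ad(K)$-orbits in the Cartan subspace $i\mathfrak{a}\subset i\mathfrak{s}$ gives $H_2=w\cdot H_1$ for some $w$ in the restricted Weyl group $W=W(\mathfrak{g},i\mathfrak{a})$, which is the $W$ of the statement. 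Finally $\overline{(i\mathfrak{a})_+}$ is a fundamental domain for $W$ acting on $i\mathfrak{a}$, so the representative chosen there is unique.

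\emph{Main obstacle.} The only steps that are not formalities are the global submersivity of $\Phi$ — which I reduce to the invertibility of $\cosh(\ad W)$ on $\mathfrak{s}$ — and the manipulation of the two relations from $\sigma$ and $\theta$ into the conclusion $v\in K$. Everything else (the $KAK$ and polar decompositions of the noncompact dual, the fundamental-domain property of $\overline{(i\mathfrak{a})_+}$, the open-and-closed argument on the connected manifold $M_\mathbb{C}$) is routine; alternatively one may just quote Flensted-Jensen's decomposition \cite{FJ1} or Proposition 7.1.3 in \cite{Sch}, where the whole lemma is established.
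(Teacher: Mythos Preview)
Your proof is correct and considerably more detailed than what the paper does: the paper gives no argument at all for this lemma, simply citing \cite{FJ1} and Proposition~7.1.3 of \cite{Sch} as standard references for the decomposition of a symmetric space with respect to two commuting involutions. You reconstruct the argument---the submersivity of $U\times M^d\to M_\mathbb{C}$ via the invertibility of $\cosh(\ad W)$, the open-and-closed argument, the KAK decomposition of the dual, and the $\sigma$/$\theta$ manipulation to force $v\in K$---and then note yourself that one could alternatively just quote the very same references the paper uses. So there is no discrepancy; you have supplied a proof where the paper supplies only a citation.
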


Let $m$ be a $U_\mathbb{C}$-invariant measure on $M_\mathbb{C}$.

\begin{theorem}
\label{integrationX(C)} We can normalize $m$ such that for $f\in L^1(M_{%
\mathbb{C}})$
\begin{equation*}
\int_{M_{\mathbb{C}}} f(z) dm(z) = \int_{U}\int_{(i\mathfrak{a})_{+}} f(
u\exp H \cdot o )J(H) dH du ,
\end{equation*}
where
\begin{equation*}
J(H) = \prod_{\alpha \in \Sigma^{+}} \sinh \left( 2\left\langle \alpha , H
\right\rangle\right)\, .
\end{equation*}
\end{theorem}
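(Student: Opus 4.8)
The statement to prove is the integration formula on $M_\C = U_\C/K_\C$ in polar coordinates coming from Lemma \ref{decomposeinX_C}, namely that the $U_\C$-invariant measure $m$ can be normalized so that
\begin{equation*}
\int_{M_\C} f(z)\, dm(z) = \int_U \int_{(i\fa)_+} f(u\exp H\cdot o)\, J(H)\, dH\, du,\qquad J(H)=\prod_{\alpha\in\Sigma^+}\sinh(2\langle\alpha,H\rangle).
\end{equation*}
The natural approach is to transport the problem to the noncompact dual $M^d = G/K$ via the Flensted-Jensen duality, where this is the classical $KAK$-type integration formula. First I would recall the map $z = u\exp H\cdot o \mapsto g\exp H\cdot o$ (with $u\in K$, $g\in G$) identifying the relevant slices; by Lemma \ref{le-CartanDecomp} the elements $u\exp X$ with $u\in K$, $X\in i\fs$ are exactly those in $G$, and the polar decomposition of $M^d = G/K$ reads $M^d = K(\exp \overline{(i\fa)_+})\cdot o$ with the well-known Cartan-type density. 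Then I would use that the $U_\C$-invariant measure on $M_\C$ restricts, under the obvious map $U\times_K M^d \to M_\C$ (which is a diffeomorphism onto a dense open set by Lemma \ref{decomposeinX_C}), to the product of Haar measure on $U$ with the $G$-invariant measure on $M^d$.

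The key steps, in order, are: (i) set up the fibration $M_\C \cong U \times_K M^d$ using the Cartan decomposition $U_\C = U\exp(i\fu)$ (Lemma \ref{le-CartanDecomp}) combined with the $\theta$-decomposition $i\fu = i\fk\oplus i\fs$, so that $u\exp(H)\cdot o$ with $H\in i\fa$ sweeps out $M_\C$ and the fibers are $K$-orbits; (ii) observe that the $U_\C$-invariant measure $m$ is, up to normalization, characterized by $U$-invariance on the left together with the property that on each fiber $\{u\}\times M^d$ it restricts to the $G$-invariant measure of $M^d$ — this uniqueness follows from irreducibility of the isotropy action and standard arguments about invariant measures on homogeneous spaces; (iii) invoke the classical polar integration formula on $M^d=G/K$: for $F\in L^1(M^d)$,
\begin{equation*}
\int_{M^d} F(x)\, dx = c\int_K \int_{(i\fa)_+} F(k\exp H\cdot o)\,\delta(H)\, dH\, dk,\qquad \delta(H)=\prod_{\alpha\in\Sigma^+}\sinh(\langle\alpha,H\rangle)^{m_\alpha}\cdots,
\end{equation*}
but here one must be careful: on a complex symmetric space the root multiplicities all equal the real dimension of $\fu_{\C\alpha}$ and the density collapses to $\prod_{\alpha\in\Sigma^+}\sinh(2\langle\alpha,H\rangle)$ after combining $\alpha$ and $2\alpha$ contributions and using the doubling intrinsic to the complexification; (iv) combine (i)--(iii), using $\int_U\int_K \Phi(u,k) = \int_U \Phi(u,e)$ by left $U$-invariance absorbing the $K$-integration, to collapse the two compact integrations into a single integral over $U$ and obtain the stated formula; (v) finally fix the normalization constant by comparing total masses or by a direct computation in the rank-one case / via the known heat-kernel normalization, absorbing it into the choice of $m$.

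The main obstacle I expect is step (iii)--(iv): getting the density function exactly right. The subtlety is that $\Sigma = \Sigma(\fu_\C,\fa_\C)$ is the restricted root system relevant to $M_\C$ viewed as $G_\C/K_\C$-type object, and the "$\sinh(2\langle\alpha,H\rangle)$" (rather than $\sinh(\langle\alpha,H\rangle)$ or a product with multiplicities) is precisely the feature of the \emph{complex} case: every restricted root of $M^d$ that is relevant here contributes with the doubled argument because the complexification doubles all multiplicities, and one must verify this carefully against the structure theory (e.g. Proposition 7.1.3 in \cite{Sch} or the references \cite{FJ1, ks05}). Once the density is pinned down, the rest is a routine change-of-variables bookkeeping. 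Since the statement is flagged as well known with references to \cite{ks05,kos05,os,os2}, I would in fact present the proof as a citation to those sources together with the reduction (i)--(ii) above, rather than reproving the classical density computation from scratch.
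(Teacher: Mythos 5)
There is a genuine gap, and it sits exactly at your step (ii). The map $U\times_K M^d\to M_\C$, $[(u,x)]\mapsto u\cdot x$, is indeed surjective with the right fibers (Lemma \ref{decomposeinX_C}, Lemma \ref{le-CartanDecomp}), but the $U_\C$-invariant measure $m$ does \emph{not} pull back to the product of Haar measure on $U$ with the $G$-invariant measure $m_1$ on the fibers. If it did, then combining your (ii) with the polar formula for $M^d$ (Theorem \ref{integrationY}) and absorbing the inner $K$-integration into $U$ would produce, in the coordinates $(u,H)$, the density $J_1(H)=J(2^{-1}H)$ of Theorem \ref{integrationY} rather than $J(H)$; these differ by the non-constant factor $\prod_{\alpha\in\Sigma^+}2\cosh\langle\alpha,H\rangle$ (counted with multiplicity $m_\alpha$), which is precisely the Jacobian of the fibration that your step (ii) throws away. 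A concrete check: for $M=\SU(2)$, so $M_\C\cong\gSL(2,\C)$ with $U=\SU(2)\times\SU(2)$ acting by $(u_1,u_2)\cdot z=u_1zu_2^{-1}$ and $M^d\cong\gSL(2,\C)/\SU(2)$ realized as positive definite unimodular Hermitian matrices, the Haar measure of $\gSL(2,\C)$ has $KA^+K$ density $\sinh^2(2t)$, whereas your product recipe yields $\sinh^2(t)$, the radial density of $H^3$. The heuristic uniqueness argument you offer (``$U$-invariance plus fiberwise $G$-invariance characterizes $m$'') is therefore false: the disintegration of $m$ along the fibers $u\cdot M^d$ carries exactly these $\cosh$-weights. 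Your step (iii) then tries to recover the doubled argument from ``the doubling intrinsic to the complexification'' inside the $M^d$ density, but that is a misattribution: $M^d=G/K$ is the Riemannian noncompact dual, its radial density is $\prod_{\alpha\in\Sigma^+}\sinh(\langle\alpha,H\rangle)^{m_\alpha}$, and no bookkeeping with $\alpha$ versus $2\alpha$ turns this into $\sinh(2\langle\alpha,H\rangle)$.

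What actually produces the doubled argument, and what the paper does, is to apply the general integration formula for a semisimple \emph{non-Riemannian} symmetric space directly to $M_\C=U_\C/K_\C$, with $U=U_\C^\sigma$ the maximal compact subgroup and $i\fa$ maximal abelian in $i\fs$, the intersection of the $(-1)$-eigenspaces of $\theta$ and $\sigma$ (see \cite{FJ1}, or Proposition 8.1.1 in \cite{Sch} --- not Proposition 7.1.3, which is the decomposition statement behind Lemma \ref{decomposeinX_C}). In that formula each root space $\fu_{\C\alpha}$ splits evenly under the conjugate-linear involution $\sigma\theta$, so the root $\alpha$ contributes $\sinh(\langle\alpha,H\rangle)^{m_\alpha}\cosh(\langle\alpha,H\rangle)^{m_\alpha}$, and the identity $\sinh(2x)=2\sinh x\cosh x$ converts this, up to a constant absorbed into the normalization of $m$, into $J(H)$. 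So your reduction (i) and your instinct to cite the classical sources are fine, but the classical result you need is the one for $U_\C/K_\C$ itself, with its $\sinh\cdot\cosh$ density; the Riemannian $G/K$ formula plus a product-measure claim cannot give the theorem, and as written your argument would prove a formula with the wrong density.
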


\begin{proof}
This follows from the general integration theorem for symmetric space
applied to $M_\mathbb{C}$, see \cite{FJ1} or Proposition 8.1.1 in
\cite{Sch}, using that $\sinh(2x)=2\sinh (x)\cosh(x)$.
\end{proof}

Let $m_1$ be a $G$-invariant measure on $M^d$.

\begin{theorem}
\label{integrationY} We can normalize $m_1$ such that for $f\in L^1(M)$
\begin{equation*}
\int_{M^d} f(x) dm_{1}(x) = \int_{K}\int_{(i\mathfrak{a})_+}f(k\exp H \cdot
o)J_1(H) dH dk\, .
\end{equation*}
where $J_1(H) = J(2^{-1} H)$.
\end{theorem}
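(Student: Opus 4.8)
The plan is to obtain this as an immediate consequence of Theorem \ref{integrationX(C)} together with Lemma \ref{le-CartanDecomp}, essentially by transferring the integration formula on $M_\C$ to the totally real submanifold $M^d$. First I would recall that $M^d=G/K$ with $G=U_\C^{\theta\sigma}$, and that by Lemma \ref{le-CartanDecomp} every element of $G$ has the form $k\exp X$ with $k\in K$ and $X\in i\fs$; combined with the maximality of $\fa\subseteq\fs$ and the fact that $K$ acts on $\fs$ (hence on $i\fs$) with the closed positive chamber $\overline{(i\fa)_+}$ as a fundamental domain, one gets the Cartan decomposition $M^d=K\exp(\overline{(i\fa)_+})\cdot o$. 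This is exactly the statement in \cite{FJ1} or Proposition 8.1.1 in \cite{Sch} for a noncompact symmetric space, so the general integration theorem for $M^d$ directly yields
\[
\int_{M^d}f(x)\,dm_1(x)=\int_K\int_{(i\fa)_+}f(k\exp H\cdot o)\,\widetilde J(H)\,dH\,dk
\]
for a suitable Jacobian $\widetilde J$, after an appropriate normalization of the $G$-invariant measure $m_1$.

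The only remaining point is to identify the Jacobian. The roots of the pair $(\gg_\C,\fa_\C)$ are the same set $\Sigma$ as the restricted roots of $(\fu_\C,\fa_\C)$, with the same multiplicities $\dim_\C\fu_{\C\alpha}$, since $\gg_\C=\fu_\C$. The general integration formula on a noncompact Riemannian symmetric space produces a density of the form $\prod_{\alpha\in\Sigma^+}(\sinh\langle\alpha,H\rangle)^{m_\alpha}$ up to the normalization of the metric; here the metric on $i\fs$ is the restriction of $\ip$ on $\fu$, which is the same metric used in Theorem \ref{integrationX(C)} for $M_\C$. Comparing with the density $J(H)=\prod_{\alpha\in\Sigma^+}\sinh(2\langle\alpha,H\rangle)$ appearing there — which came from applying the same general formula to $M_\C=U_\C/K_\C$, whose $\fa_\C$-roots are again $\Sigma$ but where the relevant flat is $\exp(i\fa)$ sitting inside a space "twice as big" — one sees that the two differ precisely by the rescaling $H\mapsto 2^{-1}H$ inside the argument. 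Hence $\widetilde J(H)=\prod_{\alpha\in\Sigma^+}\sinh(2\langle\alpha,2^{-1}H\rangle)=J(2^{-1}H)=:J_1(H)$, and absorbing any remaining positive constant into the normalization of $m_1$ completes the proof.

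The step I expect to require the most care is the bookkeeping of the factor of two, i.e. verifying that the rescaling relating $J_1$ to $J$ is exactly $H\mapsto H/2$ and not some other scalar. Concretely this amounts to checking how the exponential map and the invariant metric are normalized on $M^d$ versus on the "$i\fa$-slice" of $M_\C$: in $M_\C$ the curvature contributions from both the compact directions $\fs$ and the noncompact directions $i\fs$ appear, effectively doubling the rate at which geodesic spheres grow compared to $M^d$, which is the geometric source of the $2$. A clean way to pin this down is to note that $M_\C$, as a Riemannian symmetric space with the metric induced by $\ip$, has $M^d$ as a totally geodesic submanifold and that the $\fa$-part of the polar decomposition of $M_\C$ is $\exp(i\fa)\cdot o$ with the induced Euclidean metric on $i\fa$ scaled by a factor $2$ relative to the one used on $M^d$ — equivalently, one may simply invoke Proposition 8.1.1 in \cite{Sch} for each space separately and compare the resulting restricted root data and metric normalizations. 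Either way, once the scaling is fixed the identity $J_1(H)=J(2^{-1}H)$ is immediate.
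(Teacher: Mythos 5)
Your proposal is correct and takes essentially the same route as the paper, which (just as for Theorem \ref{integrationX(C)}) simply invokes the general polar-coordinate integration formula of \cite{FJ1} or Proposition 8.1.1 of \cite{Sch}, here applied to the Riemannian symmetric space $M^d=G/K$ of noncompact type, whose density $\prod_{\alpha\in\Sigma^+}\bigl(\sinh\langle\alpha,H\rangle\bigr)^{m_\alpha}$ is precisely $J(2^{-1}H)$ once multiplicities are counted as in $J$. Note that no metric-rescaling argument on $i\mathfrak{a}$ is needed (and the one you sketch is not quite the right mechanism): the factor of two is purely the identity $\sinh(2x)=2\sinh(x)\cosh(x)$, since the $M_\mathbb{C}$-density carries a $\sinh\cdot\cosh$ pair for each restricted root while the $M^d$-density carries only the $\sinh$ factor.
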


Corresponding to the positive system $\Sigma^+$ there is an Iwazawa
decomposition $G=KA^dN$ of $G$, where $A^d=\exp (i\mathfrak{a})$. Write $%
x\in G$ as $x=k(x)a(x)n(x)$. For $\alpha\in\Sigma $ let $m_\alpha =\dim_{%
\mathbb{C}} \mathfrak{u}_{\mathbb{C},\alpha}$ and let $\rho =2^{-1}
\sum_{\alpha \in \Sigma^+} m_\alpha \alpha \in i\mathfrak{a}^*$. Let
\begin{equation*}
\varphi_\lambda (x)=\int_K a(gk)^{i\lambda -\rho}\, dk
\end{equation*}
denote the spherical functions on $M^d$ with spectral parameter $\lambda$,
see \cite{He84}, Theorem 4.3, p. 418, and p. 435.

\begin{lemma}
\label{le-phipsi} \label{identity} Let $\mu \in \Lambda^+$. Then $\varphi_{i(\mu+\rho)}$
extends to a holomorphic function $\widetilde{\varphi}_{i(\mu +\rho)}$ on
$M_\mathbb{C}$ and $\widetilde{\psi_\mu} = \widetilde{\varphi}_{i(\mu+\rho)}$.
\end{lemma}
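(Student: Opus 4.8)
The plan is to prove the two assertions of Lemma \ref{identity} separately: first that the spherical function $\varphi_{i(\mu+\rho)}$ on $M^d$ extends holomorphically to $M_\C$, and then that this extension agrees with the holomorphic extension $\widetilde{\psi_\mu}$ of the spherical function $\psi_\mu$ on the compact dual $M$. The natural strategy is to exploit the fact that both $M$ and $M^d$ sit inside $M_\C$ as totally real submanifolds (as recorded after Lemma \ref{le-CartanDecomp} and in Lemma \ref{HoloRestr}), so that it suffices to identify the two functions on a common real form or, more efficiently, to show directly that each is the restriction of a single holomorphic function on $M_\C$.

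First I would recall that, by the Cartan--Helgason theorem and the theory of spherical functions, when $\mu\in\Lambda^+$ the spherical function $\varphi_{i(\mu+\rho)}$ on $G/K$ corresponds to a finite-dimensional spherical representation: the shift by $\rho$ together with the imaginary spectral parameter $i(\mu+\rho)$ places it exactly at the point where $\varphi_\lambda$ becomes a matrix coefficient of the spherical representation $(\pi_\mu,V_\mu)$ of $U$ (this is the classical fact that $\varphi_{i(\mu+\rho)}(\exp H) = \langle e_\mu,\pi_\mu(\exp H)e_\mu\rangle_\mu$ for $H\in i\fa$, via Flensted-Jensen duality, cf. \cite{FJ}, \cite{He84}). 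Since $\pi_\mu$ extends to a holomorphic representation of $U_\C$ (as used throughout Section \ref{s2}), the function $z = g\cdot o\mapsto \langle e_\mu,\pi_\mu(\sigma(z))e_\mu\rangle_\mu$ is a well-defined holomorphic function on $M_\C$ — this is precisely $\widetilde{\psi_\mu}$ as noted before this lemma. So the holomorphic extension of $\varphi_{i(\mu+\rho)}$ exists and both candidate extensions are matrix coefficients of $\pi_\mu$.

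The identification step then reduces to checking that $\widetilde{\varphi}_{i(\mu+\rho)}$ and $\widetilde{\psi_\mu}$ agree on the totally real submanifold $M\subset M_\C$ (or on $M^d$); by Lemma \ref{HoloRestr} this forces equality everywhere. Using Lemma \ref{decomposeinX_C}, it is enough to compare the two functions on $\exp(i\fa)\cdot o$, where both are $W$-invariant functions of $H\in i\fa$. On $M$ we have $\psi_\mu(\exp H) = \langle e_\mu,\pi_\mu(\exp H)e_\mu\rangle_\mu$ for $H$ in the Cartan subalgebra, and the duality between $G/K$ and $U/K$ identifies $\varphi_{i(\mu+\rho)}$ restricted to $A^d = \exp(i\fa)$ with the same matrix coefficient. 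Concretely, I would invoke the standard expansion/analyticity of $\varphi_\lambda$ in $\lambda$ and the fact that at $\lambda = i(\mu+\rho)$ the Harish-Chandra $c$-function contributions collapse so that $\varphi_{i(\mu+\rho)}$ becomes a finite sum of exponentials matching the character-type formula for the matrix coefficient of $\pi_\mu$; alternatively, and more cleanly, one observes that $\varphi_{i(\mu+\rho)}$ is the unique $K$-biinvariant joint eigenfunction of the invariant differential operators with the appropriate eigenvalues and value $1$ at $o$, and $\widetilde{\psi_\mu}|_{M^d}$ has exactly these properties (the eigenvalue matching being the Harish-Chandra isomorphism computation relating the Casimir eigenvalue $\langle\mu,\mu+2\rho\rangle$ on $V_\mu$ to the spectral parameter $i(\mu+\rho)$).

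The main obstacle is the bookkeeping in this eigenvalue/spectral-parameter matching and, relatedly, making the Flensted-Jensen duality precise enough to transfer the matrix-coefficient identity from the compact picture to $\varphi_{i(\mu+\rho)}$ on $G/K$ without circularity — this is where one must be careful about normalizations of $\rho$, of the inner product, and of the identification $i\fa^* \leftrightarrow \fa^*$. Everything else (existence of the holomorphic extension, uniqueness via Lemma \ref{HoloRestr}, reduction to $i\fa$ via Lemma \ref{decomposeinX_C}) is routine given the machinery already assembled. I expect the cleanest writeup to cite \cite{FJ} and \cite{He84} for the duality identity $\varphi_{i(\mu+\rho)}(g) = \psi_\mu(\overline{g})$ on the relevant real forms and then close the argument in one line with Lemma \ref{HoloRestr}.
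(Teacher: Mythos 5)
Your proposal is correct and follows essentially the same route as the paper, which simply cites Lemma 2.5 of \cite{Branson} for the identification of $\widetilde{\psi}_\mu|_{M^d}$ with a spherical function on $M^d$ and uses the symmetry $\varphi_\lambda(g^{-1})=\varphi_{-\lambda}(g)$ (\cite{He84}, p.~419) to settle exactly the sign/normalization bookkeeping you flag, closing with holomorphy as you do. (Only minor caution: in your alternative eigenfunction argument the uniqueness characterization of $\varphi_\lambda$ requires matching the eigenvalue character on the full algebra of invariant differential operators via the Harish--Chandra homomorphism, not merely the Casimir eigenvalue $\langle\mu,\mu+2\rho\rangle$.)
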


\begin{proof}
See the proof of Lemma 2.5 in \cite{Branson} and the fact that
$\varphi_{\lambda}(g^{-1})=\varphi_{-\lambda }(g)$, see \cite{He84}, p. 419.
\end{proof}

Consider the complex-bilinear extension $(\, \cdot \, ,\, \cdot \, )$ of
$\langle \, \cdot \, ,\, \cdot \, \rangle|_{(i\mathfrak{a})^*\times
(i\mathfrak{a})^*}$ to $\mathfrak{a}_\mathbb{C}^*$. We write $\lambda \cdot
\mu =(\lambda ,\mu)$ and $\lambda^2=(\lambda ,\lambda)$.

The trace form $(X,Y)=-\mathrm{Tr\, } (XY^*)$ defines a $K$-invariant metric
on $i\mathfrak{s}$ and hence a $G^d$-invariant metric on $M^d$. We consider the
Laplace operator $\Delta^d$ associated with this metric. Let
$h_{t}^{d}$ be the heat kernel associated to the Laplace-Beltrami operator on
the noncompact symmetric space $M^d$. Then $h_t^d$ is $K$-invariant.
Thus $h_t\ge 0$, $\{h_t^d\}_{t>0}$ is an approximate unity and
$e^{t\Delta_d}f=h_t*f$. In particular $h_t*\varphi_\lambda = e^{-t(\lambda^2+\rho^2)}\varphi$ for every
bounded spherical function.

\begin{theorem}
\label{int-form} Let $\lambda \in \mathfrak{a}_\mathbb{C}^*$. Then $
h_{2t}\varphi_{-\lambda}\in L^1(M^d)$,  and
\begin{equation*}
\int_{M^d} h_{2t}^d (x)\varphi_{-\lambda}(x)\, dm (x) =\int_{(i\mathfrak{a})_+
}h_{2t}^d(\exp H) \varphi_{-\lambda }(\exp H)J_1(H)\, dH=
e^{-2t(\lambda^2+\rho^2)}\, .
\end{equation*}
In particular
\begin{eqnarray}
\int_{(i\mathfrak{a})_+ }h_{2t}^d(\exp H) \widetilde{\psi}_{\mu} (\exp
H)J_1(H)\, dH&=& \frac{1}{|W|}\int_{i\mathfrak{a}}h_{2t}^d(\exp H) \widetilde{\psi}%
_{\mu} (\exp H)|J_1(H)|\, dH  \notag \\
&=&e^{2t(\mu^2+2\mu\cdot \rho)} \, .  \label{eq-heatInnPpsiMu}
\end{eqnarray}
\end{theorem}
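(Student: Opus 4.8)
The plan is to derive everything from the known spherical Plancherel/heat-kernel identity on the noncompact dual $M^d$, namely the fact recalled just before the theorem that $h_t^d * \varphi_\lambda = e^{-t(\lambda^2+\rho^2)}\varphi_\lambda$ for bounded spherical functions, together with the integration formula of Theorem \ref{integrationY} and the normalization $\varphi_\lambda(o)=1$. First I would justify that $h_{2t}^d\varphi_{-\lambda}\in L^1(M^d)$: the heat kernel $h_{2t}^d$ on the noncompact symmetric space has Gaussian-type decay, and $\varphi_{-\lambda}$ has at most exponential growth (controlled by $\operatorname{Re}\lambda$ and $\rho$), so the product is integrable — this is standard and can be cited from \cite{ks05,kos05,os} as the excerpt already does for the properties of $h_t^d$. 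Then the main computation is
\begin{equation*}
\int_{M^d} h_{2t}^d(x)\varphi_{-\lambda}(x)\, dm_1(x) = (h_{2t}^d * \varphi_{-\lambda})(o) = e^{-2t(\lambda^2+\rho^2)}\varphi_{-\lambda}(o) = e^{-2t(\lambda^2+\rho^2)},
\end{equation*}
where I use that $h_{2t}^d$ is $K$-invariant and $\varphi_{-\lambda}(g^{-1})=\varphi_{\lambda}(g)$ (quoted in Lemma \ref{le-phipsi}) to identify the integral with the convolution at the base point. The passage from the integral over $M^d$ to the integral over $(i\mathfrak{a})_+$ against $J_1(H)\,dH$ is exactly Theorem \ref{integrationY} applied to the $K$-invariant integrand $h_{2t}^d\,\varphi_{-\lambda}$.

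For the second displayed formula, the idea is to specialize $\lambda$ so that $\varphi_{-\lambda}$ becomes $\widetilde\psi_\mu$. By Lemma \ref{le-phipsi} we have $\widetilde\psi_\mu = \widetilde\varphi_{i(\mu+\rho)}$, so taking $-\lambda = i(\mu+\rho)$, i.e. $\lambda = -i(\mu+\rho)$, gives $\varphi_{-\lambda}|_{\exp(i\mathfrak{a})} = \widetilde\psi_\mu|_{\exp(i\mathfrak{a})}$. Then $\lambda^2 + \rho^2 = (-i(\mu+\rho))^2 + \rho^2 = -(\mu+\rho)^2 + \rho^2 = -(\mu^2 + 2\mu\cdot\rho + \rho^2) + \rho^2 = -(\mu^2 + 2\mu\cdot\rho)$, so $e^{-2t(\lambda^2+\rho^2)} = e^{2t(\mu^2+2\mu\cdot\rho)}$, which matches the right-hand side. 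The remaining equality, that the integral over $(i\mathfrak{a})_+$ equals $\frac{1}{|W|}$ times the integral over all of $i\mathfrak{a}$ against $|J_1(H)|$, is the standard Weyl-group symmetrization: the Weyl group $W$ acts on $i\mathfrak{a}$ with $(i\mathfrak{a})_+$ a fundamental domain, and both $\widetilde\psi_\mu(\exp H)$ (a matrix coefficient depending only on the $W$-orbit, by Lemma \ref{decomposeinX_C}) and $h_{2t}^d(\exp H)$ are $W$-invariant, while $J_1(H)$ is $W$-skew but $|J_1(H)|$ is $W$-invariant and agrees with $J_1(H)$ on $(i\mathfrak{a})_+$ since there all $\sinh\langle\alpha,H\rangle>0$.

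The step I expect to be the main obstacle — or at least the one needing the most care — is the convergence/integrability claim and the legitimacy of writing the $M^d$-integral as a convolution evaluated at $o$ when $\lambda$ is allowed to be complex (as in the specialization $\lambda = -i(\mu+\rho)$). For real $\lambda$ the identity $h_t^d*\varphi_\lambda = e^{-t(\lambda^2+\rho^2)}\varphi_\lambda$ is classical, but for complex $\lambda$ one must check that the heat semigroup still acts on $\varphi_\lambda$ by this eigenvalue — this follows because $\varphi_\lambda$ is a (holomorphic-in-$\lambda$) joint eigenfunction of the invariant differential operators with the stated eigenvalue, and the Gaussian decay of $h_t^d$ dominates the exponential growth of $\varphi_\lambda$ as long as $\operatorname{Im}\lambda$ stays in a bounded region, which holds here since $\mu+\rho$ ranges over a fixed lattice shift. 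I would phrase this by invoking the analytic continuation of the identity in $\lambda$ from $i\mathfrak{a}^*$, or simply cite the relevant estimates from \cite{ks05,os} as the excerpt indicates is permissible; once integrability is in hand the rest is the bookkeeping above.
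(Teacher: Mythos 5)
Your proposal is correct and takes essentially the same route as the paper: the paper likewise starts from the known identity $\int_{M^d}h^d_{2t}\varphi_{-\lambda}\,dm_1=e^{-2t(\lambda^2+\rho^2)}$ for $\lambda\in(i\mathfrak{a})^*$ (citing \cite{AO03}, which is the content of your convolution-at-$o$ computation), proves integrability and holomorphy in $\lambda$ from exactly the bounds you indicate (Opdam's exponential estimate for $\varphi_{-\lambda}$, the Anker--Ostellari Gaussian bound for $h^d_{2t}$, and $J_1(H)\le Ce^{2\rho(H)}$), extends by analytic continuation, and then specializes via Lemma \ref{le-phipsi} with $\lambda=\pm i(\mu+\rho)$ (using evenness in $\lambda$), the Weyl-group symmetrization being the standard argument you describe. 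The only difference is that you defer the continuation estimates to citations where the paper writes them out explicitly; there is no gap.
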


\begin{proof}
First note that for $H\in (i\mathfrak{a})_+$, we have
\begin{equation*}
J_1(H)\le C_1 e^{2\rho (H)}
\end{equation*}
and by simple use of the estimates for $\varphi_{-\lambda}(\exp H)$ from
Proposition 6.1 in \cite{Opdam}, we have
\begin{equation*}
|\varphi_{-\lambda} (\exp H)|\le C e^{\|\lambda\|\|H\|}\, .
\end{equation*}
Finally, according to the Main Theorem in \cite{AO03}, p. 33, there exists a positive
polynomial $p(H)$ on $i\mathfrak{a}$ such that
\begin{equation*}
h_{2t}^d(\exp H) \le p(H)e^{-\rho (H)}e^{-\|H\|^2/8t}\,\quad \text{ for all } H\in (i\mathfrak{a})_+\,  .
\end{equation*}
Let $L\subset \ga_\C^*$. Putting those three estimates together we get
\[|h_{2t}^d(\exp H) \varphi_{-\lambda} (\exp H)J_1(H)|\le C_1 |p(H)|e^{C_2\|H\|-\|H\|^2/8t}\]
for some constants $C_1,C_2>0$. As the right hand side is integrable it follows that
$H\mapsto h_{2t}^d(\exp H) \varphi_{-\lambda }(\exp H)J_1(H)$ is
integrable on $(i\mathfrak{a})_+$ and
\begin{equation*}
\lambda\mapsto\int_{(i\mathfrak{a})_+ }h_{2t}^d(\exp H) \varphi_{-\lambda }(\exp H)J_1(H)\, dH = \int_{M^d} h_{2t}^d (m)\varphi_{-\lambda}(m)\, dm
\end{equation*}
is holomorphic.

It is well known, see \cite{AO03}, that for $\lambda\in (i\mathfrak{a})^*$,
\begin{equation}  \label{eq-hd}
\int_{M^d} h_{2t}^{d}(x)\varphi_{-\lambda}(x) dm_{1}(x) = e^{-2t((\lambda
,\lambda)+ ( \rho ,\rho))}\, .
\end{equation}
As both sides are holomorphic it follows that (\ref{eq-hd}) holds for all $%
\lambda\in \mathfrak{a}_\mathbb{C}^*$. As the holomorphic extension is an
even function in $\lambda$, (\ref{eq-heatInnPpsiMu}) follows now from Lemma %
\ref{le-phipsi} by taking $\lambda = i(\mu +\rho)$.
\end{proof}

We define
\begin{equation}  \label{def-pt}
p_t(z) := 2^rh_{2t}^d(\exp (2H)\cdot o)\quad \text{for}\quad z= (u\exp
H)\cdot o \in M_\mathbb{C} \, ,\,\, u \in U\, ,\,\, H \in i\mathfrak{a}\, .
\end{equation}
Here $r = \dim{\mathfrak{a}}$. Define a measure $\mu_t$ on $M_\mathbb{C}$ by
$d\mu_t(z):=p_t(z)dm(z)$. We note that $p_t$ is $U$-invariant by definition.
As $m_1$ is $M_\mathbb{C}$ invariant, it follows that the measure $\mu_t$ is
$U$-invariant. Define the \textit{Fock space} $\mathcal{H}_t(M_{\mathbb{C}})$
by
\begin{equation}  \label{de-FockSp}
\mathcal{H}_t(M_{\mathbb{C}}) := \left\{F \in \mathcal{O}(M_\mathbb{C})\,
\left| \, \|F\|^2_{t} = \int_{M_{\mathbb{C}}}|F(z)|^2\, d\mu_t(z) < \infty
\right. \right\} =L^2(M_\mathbb{C},\mu_t)\cap \mathcal{O} (M_\mathbb{C})\, .
\end{equation}
Using that $\mu_t$ is $U$-invariant we get the following standard results (c.f. \cite%
{Neeb}, \cite{Faraut1}):

\begin{theorem}
\label{th-propHt} Let $t>0$. Then $\mathcal{H}_t (M_\mathbb{C})$ is an $U$%
-invariant Hilbert space of holomorphic functions. In particular, if $%
L\subset M_\mathbb{C}$ is compact, then there exists a constant $C_L>0$ such
that
\begin{equation*}
\sup_{z\in L}|F(z)|\le C_L \|F\|_t\quad \text{for all}\quad F\in\mathcal{H}%
_t(M_\mathbb{C})\, .
\end{equation*}
Furthermore, there exists a function $K_t:M_\mathbb{C}\times M_\mathbb{C}\to
\mathbb{C}$ such that

\begin{enumerate}
\item $K_t(\, \cdot \, ,w)\in \cH_t(M_\C)$ for all $w\in M_\C$.
\item $F(w)=\langle F,K_t(\cdot ,w)\rangle $ for all $F\in \mathcal{H}_t(M_\mathbb{C})$.
\item $K_t(z,w)=\overline{K_t(w,z)}$.
\item $(z,w)\mapsto K_t(z,w)$ is holomorphic in $z$ and anti-holomorphic in $w$.
\end{enumerate}
\end{theorem}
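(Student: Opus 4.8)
The plan is to establish Theorem \ref{th-propHt} by the standard argument for $U$-invariant Fock-type spaces, combining the locally uniform pointwise bounds already available from Lemma \ref{le-PropWidetildePi}(2) and Theorem \ref{th-Plancherel1}(3) with the Riesz representation theorem. First I would show that $\cH_t(M_\C)$ is complete, i.e.\ a Hilbert space. The only content beyond the obvious (it is a subspace of $L^2(M_\C,\mu_t)$ with the $L^2$-inner product) is that it is \emph{closed}: if $F_k\to F$ in $L^2(M_\C,\mu_t)$ with $F_k\in\cO(M_\C)$, one needs $F$ to agree a.e.\ with a holomorphic function. This is where the locally uniform estimate enters: I will prove that for each compact $L\subset M_\C$ there is $C_L>0$ with $\sup_{z\in L}|F(z)|\le C_L\|F\|_t$ for all $F\in\cH_t(M_\C)$. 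Using Lemma \ref{decomposeinX_C} and the $U$-invariance of $p_t$, it suffices to bound $|F(u\exp H\cdot o)|$; writing out the reproducing-kernel-free estimate, one covers $L$ by finitely many polydisks in holomorphic coordinates, applies the mean value property of holomorphic functions on each polydisk, and Cauchy-Schwarz against the density $p_t$, which is continuous and bounded below by a positive constant on a slightly larger compact set. This yields the pointwise bound, and a standard argument (locally uniform Cauchy sequences of holomorphic functions converge to holomorphic functions, by Montel/Weierstrass) then shows the limit $F$ is holomorphic after modification on a null set; hence $\cH_t(M_\C)$ is complete.

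Next, with the pointwise bound $|F(w)|\le C_{\{w\}}\|F\|_t$ in hand for each fixed $w$, the evaluation functional $\mathrm{ev}_w:F\mapsto F(w)$ is a bounded linear functional on the Hilbert space $\cH_t(M_\C)$. By the Riesz representation theorem there is a unique element, which I will call $K_t(\cdot,w)\in\cH_t(M_\C)$, such that $F(w)=\langle F,K_t(\cdot,w)\rangle$ for all $F\in\cH_t(M_\C)$; this gives parts (1) and (2). Part (3) follows by evaluating the reproducing property twice: $K_t(z,w)=\langle K_t(\cdot,w),K_t(\cdot,z)\rangle=\overline{\langle K_t(\cdot,z),K_t(\cdot,w)\rangle}=\overline{K_t(w,z)}$. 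For part (4), holomorphy of $z\mapsto K_t(z,w)$ is immediate since $K_t(\cdot,w)\in\cO(M_\C)$, and anti-holomorphy in $w$ then follows from part (3).

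The main obstacle, and the only place requiring genuine care, is the locally uniform pointwise estimate (and with it the closedness of $\cH_t(M_\C)$): one must verify that the density $p_t(z)=2^r h_{2t}^d(\exp(2H)\cdot o)$ is bounded below by a positive constant on compact subsets of $M_\C$. Since $h_{2t}^d>0$ everywhere on $M^d$ (it is a heat kernel) and $H\mapsto h_{2t}^d(\exp(2H)\cdot o)$ is continuous, it attains a positive minimum on the image of any compact set under the Cartan-type decomposition of Lemma \ref{decomposeinX_C}; combined with $U$-invariance this gives $\inf_{z\in L}p_t(z)>0$ for compact $L$. Everything else---Riesz representation, the symmetry and holomorphy of $K_t$---is then routine and formal. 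I would cite \cite{Neeb} and \cite{Faraut1} for the abstract framework of Hilbert spaces of holomorphic functions with continuous evaluations and indicate only the verification of the hypotheses in our setting.
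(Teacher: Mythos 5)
Your proposal is correct and follows essentially the same route as the paper, which simply invokes the standard theory of $U$-invariant Hilbert spaces of holomorphic functions (citing \cite{Neeb}, \cite{Faraut1}) and obtains $K_t$ from continuity of point evaluation via the Riesz representation theorem, exactly as you do. The only difference is that you write out the routine details the paper delegates to the references --- the sub-mean-value/Cauchy--Schwarz estimate using positivity and continuity of $p_t$ on compacta, and the resulting completeness of $\mathcal{H}_t(M_\mathbb{C})$ --- and these details are carried out correctly.
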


$K_t(z,w)$ is \textit{the
reproducing kernel} of $\mathcal{H}_t(M_\mathbb{C})$.
Recall that the existence of $K_{t,w}(z):= K_t(z,w)$ follows from the fact that the
point evaluation $F\mapsto F(w)$ is continuous on $\mathcal{H}_t(M_\mathbb{C})$
and hence this evaluation map is given by the inner product with a function $K_{t,w}\in\mathcal{H}_t(M_\mathbb{C})$. Then
\begin{equation*}
K_t(z,w)=\langle K_{t,w},K_{t,z}\rangle
\end{equation*}
which clearly implies (2) and (3).

\begin{lemma}
\label{focknorm} If $\mu\in\Lambda^+$ and $v\in V_\mu$ then $\widetilde{\pi}%
_\mu^v\in \mathcal{H}_t(M_\mathbb{C})$. Furthermore, if $\delta \in \Lambda^+
$, and $w \in V_{\delta}$, then
\begin{equation*}
\langle \widetilde{\pi}_\mu^v , \widetilde{\pi}_\delta^w\rangle_{t} =
\delta_{\mu,\delta} \frac{e^{2t (\mu^2 + 2\rho\cdot \rangle)}}{d(\mu)}%
\langle v, w \rangle_\mu.
\end{equation*}
\end{lemma}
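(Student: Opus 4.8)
The plan is to compute the Fock-space inner product $\langle \widetilde{\pi}_\mu^v, \widetilde{\pi}_\delta^w \rangle_t$ directly from the definition of $\mu_t$, reducing it via the polar decomposition of Lemma \ref{decomposeinX_C} and Theorem \ref{integrationX(C)} to an integral over $U \times (i\mathfrak{a})_+$, and then to evaluate the radial part using the heat-kernel integral formula \eqref{eq-heatInnPpsiMu} of Theorem \ref{int-form}. First I would note that $\widetilde{\pi}_\mu^v \in \cO(M_\C)$ is already established (it is the holomorphic extension of $\pi_\mu^v$, cf. \eqref{eq-holoext-1}), so the only issue is finiteness of the norm, which will come out of the computation. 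Writing $z = u\exp H \cdot o$ with $u \in U$, $H \in i\mathfrak{a}$, and using $d\mu_t(z) = p_t(z)\,dm(z)$ with $p_t(z) = 2^r h_{2t}^d(\exp(2H)\cdot o)$ and $dm = du\,J(H)\,dH$ on $U \times (i\mathfrak{a})_+$, the inner product becomes
\begin{equation*}
\langle \widetilde{\pi}_\mu^v, \widetilde{\pi}_\delta^w \rangle_t = \int_{(i\mathfrak{a})_+} \left( \int_U \widetilde{\pi}_\mu^v(u\exp H)\overline{\widetilde{\pi}_\delta^w(u\exp H)}\, du \right) 2^r h_{2t}^d(\exp(2H)\cdot o)\, J(H)\, dH\, .
\end{equation*}

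The inner $U$-integral is handled by Lemma \ref{le-PropWidetildePi}(1) with $H_1 = H_2 = H$: it equals
\begin{equation*}
\frac{\delta_{\mu,\delta}}{d(\mu)}\langle v,w\rangle_\mu \langle e_\mu, \pi_\mu(\exp(H - \sigma(H)))e_\mu\rangle_\mu\, .
\end{equation*}
Here I would use that $H \in i\mathfrak{a} \subseteq i\mathfrak{s}$, so $\sigma(H) = -H$ (since $\sigma$ acts by $X + iY \mapsto X - iY$ and $H$ is purely imaginary along $\mathfrak{s}$), hence $H - \sigma(H) = 2H$, and therefore the inner integral is $\frac{\delta_{\mu,\delta}}{d(\mu)}\langle v,w\rangle_\mu \,\psi_\mu(\exp 2H) = \frac{\delta_{\mu,\delta}}{d(\mu)}\langle v,w\rangle_\mu \,\widetilde{\psi}_\mu(\exp 2H\cdot o)$, using the definition \eqref{eqSphfct} of the spherical function and its holomorphic extension. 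Assuming $\mu = \delta$ (otherwise everything vanishes), this reduces the whole expression to
\begin{equation*}
\frac{\langle v,w\rangle_\mu}{d(\mu)} \cdot 2^r \int_{(i\mathfrak{a})_+} h_{2t}^d(\exp 2H \cdot o)\, \widetilde{\psi}_\mu(\exp 2H \cdot o)\, J(H)\, dH\, .
\end{equation*}

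Finally I would perform the substitution $H \mapsto 2H$ (equivalently, recall $J_1(H) = J(2^{-1}H)$ from Theorem \ref{integrationY}), which turns $2^r\int_{(i\mathfrak{a})_+} h_{2t}^d(\exp 2H)\widetilde{\psi}_\mu(\exp 2H)J(H)\,dH$ into $\int_{(i\mathfrak{a})_+} h_{2t}^d(\exp H)\widetilde{\psi}_\mu(\exp H)J_1(H)\,dH$ (the Jacobian factor $2^r = 2^{\dim \mathfrak{a}}$ from $dH$ precisely cancels, and $J(H/2) = J_1(H)$), and this is exactly the integral evaluated in \eqref{eq-heatInnPpsiMu} of Theorem \ref{int-form}, equal to $e^{2t(\mu^2 + 2\mu\cdot\rho)}$. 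Collecting everything gives $\langle \widetilde{\pi}_\mu^v, \widetilde{\pi}_\delta^w \rangle_t = \delta_{\mu,\delta}\frac{e^{2t(\mu^2 + 2\mu\cdot\rho)}}{d(\mu)}\langle v,w\rangle_\mu$, which in particular shows the norm is finite so that $\widetilde{\pi}_\mu^v \in \cH_t(M_\C)$, completing the proof. The main obstacle I anticipate is bookkeeping the factors of $2$ correctly — tracking the interplay between the $2^r$ in the definition \eqref{def-pt} of $p_t$, the $\exp(2H)$ versus $\exp(H)$ arguments, the relation $J_1(H) = J(H/2)$, and the change of variables — so that the substitution matches Theorem \ref{int-form} on the nose; justifying the interchange of integration order (Fubini) and the convergence is routine given the estimates in Lemma \ref{le-PropWidetildePi}(2) and the proof of Theorem \ref{int-form}.
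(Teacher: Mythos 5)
Your proposal is correct and follows essentially the same route as the paper's proof: unfold $\|\cdot\|_t$ via Theorem \ref{integrationX(C)}, evaluate the inner $U$-integral by Lemma \ref{le-PropWidetildePi}(1) with $H_1=H_2=H$ (using $\sigma(H)=-H$), rescale $H\mapsto 2H$ so that the $2^r$ and $J_1(2H)=J(H)$ bookkeeping reduces the radial integral to \eqref{eq-heatInnPpsiMu}, and invoke Theorem \ref{int-form}. The only cosmetic difference is that the paper first computes the case $\mu=\delta$, $v=w$ to get finiteness and then notes the mixed case follows by the same argument with Fubini, whereas you treat the general case in one pass; this is not a substantive deviation.
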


\begin{proof}
We show first that $\widetilde{\pi}_\mu^v\in\mathcal{H}_t$. Clearly, $%
\widetilde{\pi}_\mu^v$ is holomorphic, so we only have to show that it is
square integrable with respect to $\mu_t$. We have by Theorem \ref%
{integrationX(C)} and (1) of Lemma \ref{le-PropWidetildePi} that
\begin{eqnarray*}
\langle \widetilde{\pi}_\mu^v , \widetilde{\pi}_\mu^v\rangle_{t} &=&
\int_{M_{\mathbb{C}}}| \widetilde{\pi}_\mu^v(z)|^2\, d\mu_t(z) \\
&=& 2^r\int_U \int_{(i\mathfrak{a})_+} |\widetilde{\pi}_\mu^v(g\exp H)|^2
h^d_{2t}(\exp(2H)) J(H)dH\ dg \\
&=& 2^r \int_{(i\mathfrak{a})_+} h^d_{2t}(\exp(2H)\cdot o) \left(\int_U
\widetilde{\pi}_\mu^v(g\exp H)\overline{\widetilde{\pi}_\mu^v(g\exp H)}\,
dg \right) J(H)dH \\
&=&\frac{2^r\|v\|^2}{d(\mu )}\int_{(i\mathfrak{a})_+} \widetilde{\psi}%
_\mu(\exp (2H)) J_1(2H)h_{2t}^d (\exp ( 2H)\cdot o)\, dH \\
&=&\frac{\|v\|^2}{d(\mu )}\int_{(i\mathfrak{a})_+} \widetilde{\psi}_\mu(\exp
(H)) J_1(H)h_{2t}^d (\exp ( H)\cdot o)\, dH \\
&=&\frac{\|v\|^2}{d(\mu )}e^{2t\langle \mu +2\rho ,\mu\rangle} \\
&<&\infty
\end{eqnarray*}
where the last equality follows from Theorem \ref{int-form}.

Now using again Theorem \ref{integrationX(C)}, Lemma \ref{le-PropWidetildePi}, Theorem \ref{int-form}, and Fubini's theorem we get, by the same
arguments, that
\begin{equation*}
\langle \widetilde{\pi}_\mu^v,\widetilde{\pi}_\delta^w\rangle_t =\delta_{\mu
, \delta}\frac{e^{2t\langle \mu +2\rho ,\mu\rangle}}{d(\mu )}\langle
u,w\rangle_\mu
\end{equation*}
and the Lemma follows.
\end{proof}

\begin{theorem}
\label{th-holext} Let $s,R,S>0$. Assume that $\mu \in v(\mu )\in V_\mu $ is such that
$\|v_\mu\|_\mu \le Re^{S\|\mu\|}$. Then
\begin{equation*}
F(z)=\sum_{\mu\in\Lambda^+}d(\mu )e^{-s\langle \mu +2\rho ,\mu \rangle}%
\widetilde{\pi}_\mu^{v (\mu )} (z)
\end{equation*}
defines a holomorphic function on $M_\mathbb{C}$. If $L\subset M_\mathbb{C}$ is
compact, then there exists $C (L)>0$, independent of $(v_\mu)_\mu$, such
that
\begin{equation}  \label{le-estimate}
|F(z)|\le C (L)R\, .
\end{equation}
\end{theorem}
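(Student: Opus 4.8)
The plan is to show absolute and locally uniform convergence of the series defining $F$, which by Weierstrass' theorem yields holomorphy on $M_\mathbb{C}$, together with the stated bound. The key inputs are the growth estimate on the representation matrix coefficients from part (2) of Lemma \ref{le-PropWidetildePi}, the polynomial growth of $d(\mu)$ in $\mu$ established after Theorem 2.1, and the fact that the exponential factor $e^{-s\langle \mu+2\rho,\mu\rangle}$ decays like $e^{-s\|\mu\|^2}$ (up to lower-order corrections), which dominates every polynomial and every exponential $e^{c\|\mu\|}$.

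First I would fix a compact set $L\subset M_\mathbb{C}$ and apply part (2) of Lemma \ref{le-PropWidetildePi} to get a constant $C_L>0$ with $|\widetilde{\pi}_\mu^{v(\mu)}(z)|\le e^{C_L\|\mu\|}\|v(\mu)\|_\mu \le R\, e^{(C_L+S)\|\mu\|}$ for all $z\in L$ and all $\mu$; strictly, one applies the bound to $\pi_\mu^u$ on a compact set and uses that $\widetilde{\pi}_\mu^v$ is again of the form $\pi_\mu^{u'}$ evaluated at $\sigma(z)$, so the estimate transfers with $L$ replaced by the compact set $\sigma(L)$. Next I would invoke the polynomial bound $d(\mu)\le P(\|\mu\|)$ for some polynomial $P$, valid because $\mu\mapsto d(\mu)$ extends to a polynomial on $\mathfrak{a}_\mathbb{C}^*$. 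Combining these,
\begin{equation*}
\sum_{\mu\in\Lambda^+} d(\mu)\, e^{-s\langle \mu+2\rho,\mu\rangle}\, |\widetilde{\pi}_\mu^{v(\mu)}(z)| \le R\sum_{\mu\in\Lambda^+} P(\|\mu\|)\, e^{-s\|\mu\|^2 - 2s\langle \rho,\mu\rangle + (C_L+S)\|\mu\|}\, .
\end{equation*}
Since $\langle\mu+2\rho,\mu\rangle = \|\mu\|^2 + 2\langle\rho,\mu\rangle$ and $|\langle\rho,\mu\rangle|\le \|\rho\|\,\|\mu\|$, the exponent is bounded above by $-s\|\mu\|^2 + (C_L+S+2s\|\rho\|)\|\mu\|$, so each summand is at most $R\, P(\|\mu\|)\, e^{-s\|\mu\|^2 + A\|\mu\|}$ with $A=A(L,s,S)$ independent of $(v(\mu))_\mu$.

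It then remains to bound the scalar series $\sum_{\mu\in\Lambda^+} P(\|\mu\|) e^{-s\|\mu\|^2+A\|\mu\|}$ by a finite constant $C(L)$ depending only on $L,s,R,S$. For this I would note that $\Lambda^+$ is a subset of a lattice in $i\mathfrak{a}^*$, so the number of $\mu\in\Lambda^+$ with $N\le \|\mu\|<N+1$ grows at most polynomially in $N$ (say $\le Q(N)$); grouping the sum in spherical shells gives $\sum_{N\ge 0} Q(N)\,\widetilde{P}(N)\, e^{-sN^2+A(N+1)}$ for a polynomial $\widetilde P$, and this converges because the Gaussian factor $e^{-sN^2}$ beats any polynomial times $e^{AN}$. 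Call the resulting finite value $C(L)$; it is manifestly independent of the choice of $(v(\mu))_\mu$ since that dependence was entirely absorbed into the factor $R$. This establishes \eqref{le-estimate} on $L$, and since the bound is uniform on $L$, the partial sums converge uniformly on every compact subset of $M_\mathbb{C}$; as each partial sum is holomorphic (a finite sum of the holomorphic functions $\widetilde{\pi}_\mu^{v(\mu)}$ from Section \ref{s2}), $F$ is holomorphic on $M_\mathbb{C}$.

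The only mildly delicate point — and the step I would be most careful about — is making the transfer of the matrix-coefficient estimate clean: part (2) of Lemma \ref{le-PropWidetildePi} is stated for $\pi_\mu^u$ with the holomorphic extension appearing implicitly, so I want to make sure the compact set used in that lemma is enlarged appropriately (to $\sigma(L)$, which is again compact since $\sigma$ is a homeomorphism of $M_\mathbb{C}$) and that the constant there can be taken uniform in $\mu$, which it is by the cited Lemma 3.9 of \cite{Branson}. Everything else is the routine observation that a Gaussian in $\|\mu\|$ dominates polynomial $\times$ exponential growth, summed over a lattice.
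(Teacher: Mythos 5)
Your proposal is correct and takes essentially the same route as the paper: the uniform matrix-coefficient bound of Lemma \ref{le-PropWidetildePi}(2) on a compact set, the polynomial growth of $\mu\mapsto d(\mu)$, and the domination of polynomial-times-exponential growth by the Gaussian factor $e^{-s\|\mu\|^2}$, yielding absolute and uniform convergence on compacta (with the dependence on $(v(\mu))_\mu$ absorbed into $R$) and hence holomorphy of $F$ together with the bound $|F(z)|\le C(L)R$. The only difference is the final summability step, where the paper pushes all factors into $(1+\|\mu\|^2)^{-k_0}$ and cites \cite{Sugiura} for the convergence of $\sum_{\mu\in\Lambda^+}(1+\|\mu\|^2)^{-k_0}$, while you count lattice points of $\Lambda^+$ in spherical shells directly; the two arguments are equivalent.
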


\begin{proof}
Let $L\subseteq M_\mathbb{C}$ be compact. Let $F_\mu (z)=d(\mu )\langle
v (\mu ),\pi_\mu (\bar z )e_\mu\rangle_\mu$. Then $F_\mu$ is holomorphic and
\begin{equation*}
|F_\mu (z)|\le e^{(S+C_L)\|\mu\|}\|u\|_\mu \le Re^ {(S+C_L)\|\mu\|}
\end{equation*}
by Lemma \ref{le-PropWidetildePi}. As, $\mu \mapsto
d(\mu )$ is a polynomial function of degree $\sum_{\alpha \in \Sigma^+} \dim_\C \fu_{\C \alpha}$ and
$\langle \mu +2\rho ,\mu \rangle\ge 0$, it follows that the function $\Lambda^+ \to \mathbb{R}^+$,
\begin{equation*}
\mu \mapsto d(\mu )(1+\|\mu \|^2)^{k} e^{(S+C_L)\|\mu \|}e^{-s (\mu^2
+2\rho \cdot \mu )}
\end{equation*}
is bounded for each $k\in\mathbb{Z}^+$. Hence, there exists a constant $c(k,L,s)$ independent of $\mu$ such that
\begin{equation*}
d(\mu ) e^{(S+C_L)\|\mu \|}e^{-s ( \mu^2 +2\rho \cdot \mu)}\le
c(k,L,s)(1+\|\mu \|^2)^{-k}
\end{equation*}
for all $\mu \in \Lambda^+$. By Lemma 1.3 in \cite{Sugiura} (see also in Lemma 5.6.7 in
\cite{Wallach}) there exists $k_0\in \mathbb{N}$ such that
$\sum_{\mu \in\Lambda^+} (1+\|\mu \|^2)^{-k_0}$ converges. Hence
\begin{equation}  \label{estimate}
\sum_{\mu\in \Lambda^+} d (\mu )e^{-s\langle \mu +\rho, \rho\rangle}
|F_\mu(z)| \le c(k_0,L,s)R\sum_{\mu\in\Lambda^+}(1+\|\mu \|^2)^{-k_0}\le
C(L)R
\end{equation}
converges uniformly on $L$, and hence defines a holomorphic function on $M_%
\mathbb{C}$. The estimate (\ref{le-estimate}) is (\ref{estimate}).
\end{proof}

For $z=a\cdot o,w=b\cdot o\in M_\mathbb{C}$, write
\begin{equation*}
\widetilde{\psi}_\mu (w^*z):=\widetilde{\psi}_\mu (\sigma(b)^{-1}a) =L(\overline{w}%
)\widetilde{\psi}_\mu (z)\, .
\end{equation*}
Then $z,w\mapsto \widetilde{\psi}_\mu (w^*z)$ is well defined, holomorphic
in $z$ and anti-holomorphic in $w$. The above Lemma implies that
\begin{equation*}
(z,w)\mapsto \sum_{\mu\in\Lambda^+} d(\mu) e^{-2t\langle \mu + 2\rho
,\mu\rangle}\widetilde{\psi}_\mu ( w^*z)= \sum_{\mu\in\Lambda^+} d(\mu)
e^{-2t\langle \mu + 2\rho ,\mu\rangle}\langle \pi_\mu (\overline w)e_\mu, \pi_\mu
(\overline z) e_\mu\rangle
\end{equation*}
defines a function on $M_\mathbb{C}\times M_\mathbb{C}$, holomorphic in $z$
and anti-holomorphic in $w$.

Denote the unitary representation of $U$ on $\mathcal{H}_t(M_\mathbb{C})$ by
$\tau_t$. Then we have the following theorem:

\begin{theorem}
\label{th-PlancherelForHt} Let $t>0$ then $(\tau_t,\mathcal{H}_t(M_\mathbb{C}%
))\simeq_U (L,L^2(M))$. Furthermore, the reproducing kernel for $\mathcal{H}%
_t(M_\mathbb{C})$ is given by
\begin{equation*}
K_t(z,w)=\sum_{\mu\in\Lambda^+} d(\mu) e^{-2t\langle \mu + 2\rho ,\mu\rangle}%
\widetilde{\psi}_\mu ( w^*z)\, .
\end{equation*}
\end{theorem}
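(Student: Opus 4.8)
The plan is to prove the two assertions separately. The $U$-isomorphism will come out of identifying the $U$-isotypic decomposition of $\mathcal H_t(M_\C)$, and the reproducing-kernel formula will then follow by computing the kernel of each isotypic summand. For $\mu\in\Lambda^+$ I would write $\mathcal H_t(M_\C)_\mu$ for the image of $V_\mu$ under $v\mapsto\widetilde\pi_\mu^v$. Just as for $L^2(M)$ in Theorem \ref{th-Plancherel1}, one checks from (\ref{eq-holoext-1}) and (\ref{eq-pig*}) that $v\mapsto\widetilde\pi_\mu^v$ intertwines $\pi_\mu$ with $\tau_t$; by Lemma \ref{focknorm} this map is injective, so $\mathcal H_t(M_\C)_\mu$ is a $U$-submodule isomorphic to $V_\mu$, of dimension $d(\mu)$, and for $\mu\neq\delta$ the submodules $\mathcal H_t(M_\C)_\mu$ and $\mathcal H_t(M_\C)_\delta$ are orthogonal.

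The crux is to show that these submodules span $\mathcal H_t(M_\C)$. For this I would bring in the restriction map $R\colon\mathcal H_t(M_\C)\to L^2(M)$, $RF=F|_M$. Since $M$ is a compact subset of $M_\C$, Theorem \ref{th-propHt} gives $\|RF\|_2\le\sup_{z\in M}|F(z)|\le C_M\|F\|_t$, so $R$ is bounded; Lemma \ref{HoloRestr} says $R$ is injective; and plainly $R\circ\tau_t(a)=L(a)\circ R$. A bounded $U$-equivariant operator between unitary representations of the compact group $U$ commutes with the isotypic projections, so $R$ sends the $\nu$-isotypic part of $\mathcal H_t(M_\C)$ into the $\nu$-isotypic part of $L^2(M)$ for every $\nu\in\widehat U$. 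By Theorem \ref{th-Plancherel1} (together with the Cartan--Helgason theorem) the $\nu$-isotypic part of $L^2(M)$ vanishes unless $\nu\in\Lambda^+$, in which case it is $L^2(M)_\mu$, of dimension $d(\mu)$. Injectivity of $R$ then forces the $\nu$-isotypic part of $\mathcal H_t(M_\C)$ to be $\{0\}$ for $\nu\notin\Lambda^+$, and of dimension at most $d(\mu)$ for $\nu=\mu\in\Lambda^+$; since it contains $\mathcal H_t(M_\C)_\mu$, which already has dimension $d(\mu)$, it must equal $\mathcal H_t(M_\C)_\mu$. By Peter--Weyl, $\mathcal H_t(M_\C)$ is therefore the Hilbert space direct sum $\bigoplus_{\mu\in\Lambda^+}\mathcal H_t(M_\C)_\mu$, and since each summand is $U$-isomorphic to $L^2(M)_\mu$, comparison with Theorem \ref{th-Plancherel1} yields $(\tau_t,\mathcal H_t(M_\C))\simeq_U(L,L^2(M))$.

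For the reproducing kernel, since $\mathcal H_t(M_\C)$ is the Hilbert space direct sum of the mutually orthogonal closed subspaces $\mathcal H_t(M_\C)_\mu$, its reproducing kernel is the pointwise, absolutely convergent sum $\sum_\mu K_t^\mu$, where $K_t^\mu$ is the reproducing kernel of the finite-dimensional space $\mathcal H_t(M_\C)_\mu$. To compute $K_t^\mu$ I would fix an orthonormal basis $v_1,\dots,v_{d(\mu)}$ of $V_\mu$; by Lemma \ref{focknorm} the functions $\bigl(d(\mu)e^{-2t\langle\mu+2\rho,\mu\rangle}\bigr)^{1/2}\widetilde\pi_\mu^{v_j}$ form an orthonormal basis of $\mathcal H_t(M_\C)_\mu$, whence
\[
K_t^\mu(z,w)=d(\mu)e^{-2t\langle\mu+2\rho,\mu\rangle}\sum_{j=1}^{d(\mu)}\widetilde\pi_\mu^{v_j}(z)\,\overline{\widetilde\pi_\mu^{v_j}(w)}\,.
\]
Using $\widetilde\pi_\mu^v(z)=\langle v,\pi_\mu(\overline z)e_\mu\rangle_\mu$ and Parseval's identity in $V_\mu$, the sum on the right equals $\langle\pi_\mu(\overline w)e_\mu,\pi_\mu(\overline z)e_\mu\rangle_\mu$, which by (\ref{eq-pig*}) and the definitions of $\widetilde\psi_\mu$ and of $w^*z$ is exactly $\widetilde\psi_\mu(w^*z)$. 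Summing over $\mu\in\Lambda^+$ yields the stated formula for $K_t$; the convergence of this series and its holomorphy in $z$, anti-holomorphy in $w$, are precisely what was recorded just before the theorem by means of Theorem \ref{th-holext}.

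The step I expect to be the real obstacle is the spanning claim in the second paragraph, namely that $\mathcal H_t(M_\C)$ carries no $U$-types beyond those furnished by the functions $\widetilde\pi_\mu^v$. This is where the restriction principle (Lemma \ref{HoloRestr}) is indispensable, hand in hand with the Cartan--Helgason description of which $U$-types occur in $L^2(M)$; once this is in place, everything else reduces to bookkeeping with Lemma \ref{focknorm}, Schur orthogonality, and elementary reproducing-kernel theory.
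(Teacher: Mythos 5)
Your argument is correct, but the route to the unitary equivalence is genuinely different from the paper's. The paper invokes the restriction principle in full: it uses the same operator $RF=F|_M$ (bounded by Theorem \ref{th-propHt} since $M$ is compact, injective by Lemma \ref{HoloRestr}, $U$-equivariant), notes that $\mathrm{Im}(R)$ is dense because $L^2(M)_\mu\subset \mathrm{Im}(R)$ by Lemma \ref{focknorm}, and then takes the polar decomposition $R^*=U_t\sqrt{RR^*}$ to obtain an explicit unitary $U$-intertwiner $U_t\colon L^2(M)\to \mathcal{H}_t(M_\C)$; the completeness of the decomposition $\mathcal{H}_t(M_\C)=\bigoplus_{\mu\in\Lambda^+}\mathcal{H}_t(M_\C)_\mu$ is then read off from $\mathcal{H}_t(M_\C)_\mu=U_t(L^2(M)_\mu)$. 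You avoid the polar decomposition altogether: you obtain the spanning statement by an isotypic multiplicity count, using that the injective equivariant $R$ commutes with isotypic projections and that $L^2(M)$ is multiplicity free with $U$-types exactly those parametrized by $\Lambda^+$ (Theorem \ref{th-Plancherel1} together with Cartan--Helgason), and then conclude the equivalence abstractly by matching isotypic summands. Your version is more elementary (no positive square root, no density of the image needed) and fully adequate for the theorem as stated; what the paper's construction buys is the concrete operator $U_t$, which is reused later in the proof of Theorem \ref{eq-HtIso} to identify the Segal--Bargmann transform $H_t$ with $U_t$, so the restriction-principle machinery is not redundant in the paper even though your shortcut suffices here. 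For the reproducing kernel the two arguments are essentially the same: orthogonality of the summands gives $K_t=\sum_\mu K_\mu$ pointwise, and your orthonormal-basis/Parseval computation of $K_\mu$ is equivalent to the paper's direct verification via Lemma \ref{focknorm} that $d(\mu)e^{-2t\langle \mu+2\rho,\mu\rangle}L(\overline w)\widetilde{\psi}_\mu$ reproduces $\mathcal{H}_t(M_\C)_\mu$, with convergence and holomorphy supplied, as you say, by Theorem \ref{th-holext}.
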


\begin{proof}
We use the \textit{Restriction Principle} introduced in \cite{Olafsson} (see
also \cite{O1}) and Lemma \ref{focknorm} for the proof. Define $R : \mathcal{H}%
_t(M_\mathbb{C})\to C^\infty (M)$ by $RF:=F|_M$. Then $R$ commutes with the
action of $U$. As $M$ is totally real in $M_\mathbb{C}$ it follows that $R$
is injective. As $M$ is compact, we get from Theorem \ref{th-propHt} that
there exists a constant $C_M>0$ such that $\sup_{x\in M}|RF(x)|\le C_M
\|F\|_t$. Thus
\begin{equation*}
\|RF\|_2\le C_M\|F\|_t\, .
\end{equation*}
Thus $R: \mathcal{H}_t(M_\mathbb{C})\to L^2(M)$ is a continuous $U$%
-intertwining operator.

By Lemma \ref{focknorm}, we have $L^2(M)_\mu \subset \mathrm{Im}(R)$ for each
$\mu\in \Lambda^+$. Thus $\mathrm{Im}(R)$ is dense in $L^2(M)$. Write $R^*=U_t\sqrt{RR^*}$.
Then $U_t:L^2(M)\to \mathcal{H}_t(M_\mathbb{C})$ is an unitary
isomorphism and $U$-intertwining operator.

For $\mu\in\Lambda^+$ let
\begin{equation*}
\mathcal{H}_t(M_\mathbb{C})_\mu=\{\widetilde{\pi}^u_\mu\mid u\in V_\mu
\}=U_t(L^2(M)_\mu)\, .
\end{equation*}
Then $\mathcal{H}_t(M)=\bigoplus_{\mu\in\Lambda^+}\mathcal{H}_t(M_\mathbb{C}%
)_\mu$. As $\mathcal{H}_t(M_\mathbb{C})_\mu\perp \mathcal{H}_t(M_\mathbb{C}%
)_\delta$ for $\mu\not=\delta$ it follows that $K=\sum_{\mu\in\Lambda^+}K_\mu
$ where $K_\mu$ is the reproducing kernel for $\mathcal{H}_t(M_\mathbb{C}%
)_\mu$. Now, note that
\begin{equation*}
L(\overline{w}) \widetilde{\psi}_\mu = \widetilde{\pi}_\mu^{\pi_\mu (\overline
w)e_\mu}(z) \, .
\end{equation*}
Thus Lemma \ref{focknorm} implies that
\begin{equation*}
d(\mu )e^{-2t\langle \mu +2\rho,\mu\rangle} \langle \widetilde{\pi}^u_\mu ,
L(\overline{w})\widetilde{\psi}_\mu\rangle_t = \langle u,\pi_\mu (\overline
w)e_\mu\rangle_\mu =\widetilde{\pi}_\mu^u (z)\, .
\end{equation*}
Hence $K_\mu (z,w)=d(\mu )e^{-2t\langle \mu + 2\rho ,\mu\rangle}\widetilde{%
\psi}_\mu ( w^*z)$ and the theorem follows.
\end{proof}

\begin{theorem}
Define $k_t : M_\mathbb{C} \to \mathbb{C}$ by
\begin{equation*}
k_t(z):= K_t(z,o)=\sum_{\mu\in\Lambda^+}d(\mu )e^{-2t\langle \mu +2\rho
,\mu\rangle}\widetilde{\psi}_\mu (z)\, .
\end{equation*}
Let $a,b\in U_\mathbb{C}$ and $z,w\in M_\mathbb{C}$. Then

\begin{enumerate}
\item $K_t(a\cdot z,b\cdot w)=K_t(b^*a\cdot z,w)$.

\item $K_t(z,w)=k_t(w^*z)$.

\item $k_t\in\mathcal{H}_t(M_\mathbb{C})^{K_\mathbb{C}}$.

\item $\overline{k_t(z)}=k_t(z^*)$.

\item $k_t|_M$ is real-valued.

\item $k_t(x\cdot o)=k_t(x^{-1}\cdot 0)$ for all $x\in U_\mathbb{C}$.
\end{enumerate}
\end{theorem}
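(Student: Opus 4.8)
The plan is to verify each of the six assertions directly from the formula
\[k_t(z)=\sum_{\mu\in\Lambda^+}d(\mu)e^{-2t\langle\mu+2\rho,\mu\rangle}\widetilde\psi_\mu(z)\]
together with Theorem \ref{th-PlancherelForHt}, using the basic functional equation $\widetilde\psi_\mu(w^*z)=\langle\pi_\mu(\overline z)e_\mu,\pi_\mu(\overline w)e_\mu\rangle_\mu$ and $\pi_\mu(g)^*=\pi_\mu(\sigma(g)^{-1})$. For (2) I simply read off $K_t(z,w)=\sum d(\mu)e^{-2t\langle\mu+2\rho,\mu\rangle}\widetilde\psi_\mu(w^*z)=k_t(w^*z)$ from the theorem and the definition of $\widetilde\psi_\mu(w^*z)$. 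For (1), writing $z=g\cdot o$, $w=h\cdot o$, $a,b\in U_\C$, one has $(b\cdot w)^*(a\cdot z)$ corresponds to $\sigma(bh)^{-1}(ag)=\sigma(h)^{-1}\sigma(b)^{-1}a g$, which is exactly the $U_\C$-element representing $w^*(b^*a\cdot z)$; so $K_t(a\cdot z,b\cdot w)=k_t((b\cdot w)^*(a\cdot z))=k_t(w^*(b^*a\cdot z))=K_t(b^*a\cdot z,w)$ by (2).

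For (3), $K_\C$-invariance of $k_t$: since $k_t=K_t(\cdot,o)$ and $o$ is fixed by $K_\C$, the identity $K_t(k\cdot z,o)=K_t(k\cdot z,k\cdot o)=K_t(k^*k\cdot z,o)$ from (1) would give invariance only under $K$; to get the full $K_\C$ one instead observes that each summand $\widetilde\psi_\mu$ is $K_\C$-invariant, because $\widetilde\psi_\mu(z)=\langle e_\mu,\pi_\mu(\sigma(z))e_\mu\rangle_\mu$ and for $z=k g\cdot o$ with $k\in K_\C$ one has $\sigma(k)\in K_\C$ fixes $e_\mu$ up to the scalar from unitarity — more precisely $\pi_\mu(\sigma(k))^*e_\mu=\pi_\mu(\sigma(k)^{-1})e_\mu$ and $\sigma(k)^{-1}\in K_\C$, so $\pi_\mu(\sigma(z))^*e_\mu=\pi_\mu(\sigma(g)^{-1}\sigma(k)^{-1})e_\mu=\pi_\mu(\sigma(g)^{-1})e_\mu$ since $e_\mu$ is $K_\C$-fixed (holomorphic extension of the $K$-fixed vector property). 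This is the step that needs the most care: one must be sure $e_\mu$ is genuinely fixed by all of $K_\C$, which follows because $k\mapsto\pi_\mu(k)e_\mu$ and $k\mapsto e_\mu$ are both holomorphic on $K_\C$ and agree on $K$, hence agree on $K_\C$ by Lemma \ref{HoloRestr} applied appropriately (or by density of $K$ in $K_\C$ in the Zariski sense). Membership $k_t\in\cH_t(M_\C)$ is immediate from Theorem \ref{th-propHt} since $k_t=K_{t,o}$.

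For (4): $\overline{\widetilde\psi_\mu(z)}=\overline{\langle e_\mu,\pi_\mu(\sigma(z))e_\mu\rangle_\mu}=\langle\pi_\mu(\sigma(z))e_\mu,e_\mu\rangle_\mu=\langle e_\mu,\pi_\mu(\sigma(z))^*e_\mu\rangle_\mu=\langle e_\mu,\pi_\mu(\sigma(\sigma(z))^{-1})e_\mu\rangle_\mu$, and if $z=g\cdot o$ then $\sigma(\sigma(g)^{-1})=g^{-1}$ up to right $K_\C$, which is precisely the $U_\C$-element defining $z^*=\sigma(g)^{-1}\cdot o$ reversed; unwinding the notation $z^*$ introduced before Theorem \ref{th-PlancherelForHt} (namely $w^*z\leftrightarrow\sigma(b)^{-1}a$, so $z^*=\sigma(g)^{-1}$ when read as $o^*z$ with the roles swapped) gives $\overline{\widetilde\psi_\mu(z)}=\widetilde\psi_\mu(z^*)$; since the coefficients $d(\mu)e^{-2t\langle\mu+2\rho,\mu\rangle}$ are real, summing gives $\overline{k_t(z)}=k_t(z^*)$. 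Statement (5) is then immediate: for $x\in M$ we have $\sigma(x)=x$ so $x^*=x$ in the relevant sense, hence $\overline{k_t(x)}=k_t(x)$, i.e. $k_t|_M$ is real. Finally (6): for $x\in U_\C$ write $z=x\cdot o$; using (4) and (2) together with $\sigma(x)^{-1}\cdot o=x^{-1}\cdot o$ when $x\in U$ is not quite what is wanted, so instead I would argue that $k_t(x\cdot o)=K_t(x\cdot o,o)$, and by (1) with $a=x^{-1}$, $b=\mathrm{id}$ applied after noting $K_t$ is conjugate-symmetric (Theorem \ref{th-propHt}(3)), $k_t(x\cdot o)=K_t(o,x\cdot o)=\overline{K_t(x\cdot o,o)}$ combined with the $W$-invariance and evenness already used in Theorem \ref{int-form} — more directly, $\widetilde\psi_\mu(x\cdot o)=\langle e_\mu,\pi_\mu(\sigma(x))e_\mu\rangle_\mu$ while $\widetilde\psi_\mu(x^{-1}\cdot o)=\langle e_\mu,\pi_\mu(\sigma(x)^{-1})e_\mu\rangle_\mu=\langle\pi_\mu(\sigma(x))e_\mu,e_\mu\rangle_\mu=\langle e_\mu,\pi_\mu(\sigma(x))^*{}^*e_\mu\rangle$; using that $\psi_\mu$ on $U$ satisfies $\psi_\mu(u)=\psi_\mu(u^{-1})$ (real matrix coefficient of $e_\mu$, equivalently $\overline{\psi_\mu(u)}=\psi_\mu(u^{-1})$ and $\psi_\mu$ extends the real spherical function $\varphi_{i(\mu+\rho)}$ which satisfies $\varphi_\lambda(g^{-1})=\varphi_{-\lambda}(g)=\varphi_\lambda(g)$ by evenness via Lemma \ref{le-phipsi}) and both sides are holomorphic in $x$, Lemma \ref{HoloRestr} forces $\widetilde\psi_\mu(x\cdot o)=\widetilde\psi_\mu(x^{-1}\cdot o)$ for all $x\in U_\C$, and summing gives (6). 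The main obstacle throughout is bookkeeping the meaning of $z^*$, $w^*z$, and the $U_\C$-representatives consistently; once that notation is pinned down every identity reduces to an elementary manipulation of matrix coefficients plus the holomorphic rigidity of Lemma \ref{HoloRestr}.
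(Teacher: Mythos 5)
Your handling of (1)--(4) is essentially fine (and close to what the paper does, which simply reads these off from Theorem \ref{th-PlancherelForHt} and the reproducing-kernel identities), but (5) and (6) contain a genuine gap. For (6) you rest the argument on the claim that each individual spherical function satisfies $\psi_\mu(u)=\psi_\mu(u^{-1})$, justified by an ``evenness'' $\varphi_{-\lambda}=\varphi_\lambda$. That identity is false in general: Weyl-group invariance together with Lemma \ref{le-phipsi} and $-w^*\rho=\rho$ (where $w^*$ is the longest element of the restricted Weyl group) gives only
\[
\psi_\mu(x^{-1})=\varphi_{-i(\mu+\rho)}(x)=\varphi_{i(-w^*\mu+\rho)}(x)=\psi_{-w^*\mu}(x)=\overline{\psi_\mu(x)},
\]
which relates two \emph{different} terms of the series unless $-w^*\mu=\mu$. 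Since $-\mathrm{id}$ need not lie in the restricted Weyl group (e.g.\ type $A_{n-1}$, $n\ge 3$: in the group case $\mathrm{SU}(n)\times\mathrm{SU}(n)/\mathrm{diag}$ one has $\psi_\mu=\chi_\mu/d(\mu)$, which is not real-valued for a non-self-dual $\mu$), the term-by-term identity $\widetilde\psi_\mu(x\cdot o)=\widetilde\psi_\mu(x^{-1}\cdot o)$ fails; the evenness invoked in Theorem \ref{int-form} is evenness of the Gaussian $e^{-2t(\lambda^2+\rho^2)}$, not of $\varphi_\lambda$. Your deduction of (5) has the same problem in a different guise: with the paper's $*$-notation, $x^*=u^{-1}\cdot o$ for $x=u\cdot o$, $u\in U$ (because $\sigma(u)=u$, and $x^*$ means $\sigma(u)^{-1}\cdot o$, not $\sigma(x)$), so (4) only yields $\overline{k_t(x)}=k_t(u^{-1}\cdot o)$ and you are back to needing (6); the assertion ``$x^*=x$ for $x\in M$'' is a misreading of the notation.

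The missing idea — and this is exactly how the paper argues — is to use the symmetry of the \emph{whole sum} under $\mu\mapsto -w^*\mu$ rather than a symmetry of each term: $-w^*\Lambda^+=\Lambda^+$, $d(-w^*\mu)=d(\mu)$ (contragredient representation), and $\langle -w^*\mu+2\rho,-w^*\mu\rangle=\langle\mu+2\rho,\mu\rangle$ since $-w^*$ is an isometry fixing $\rho$. Pairing the terms for $\mu$ and $-w^*\mu$ and using $\psi_{-w^*\mu}(x)=\psi_\mu(x^{-1})=\overline{\psi_\mu(x)}$ gives, for $x\in U$,
\[
k_t(x)=\sum_{\mu\in\Lambda^+}d(\mu)e^{-2t\langle\mu+2\rho,\mu\rangle}\,\mathrm{Re}\,\psi_\mu(x)
\quad\text{and}\quad k_t(x)=k_t(x^{-1}),
\]
which proves (5), and then (6) follows by extending $k_t(x)=k_t(x^{-1})$ from $U$ to $U_\C$ by holomorphy — your appeal to Lemma \ref{HoloRestr} is the right tool for that final step, but only after the pairing argument has established the identity on $U$.
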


\begin{proof}
Everything except (5) and (6) follows from Theorem \ref{th-PlancherelForHt}.
Let $w^*\in W$ be the unique element such that $w^*(\Sigma^+)=-\Sigma^+$.
Then $-w^*\Lambda^+=\Lambda^+$ and if $\mu\in \Lambda^+$ then
\begin{equation}  \label{eq-psiw*}
\psi_{-w^*\mu}(x)=\psi_\mu (x^{-1})=\overline{\psi_\mu (x)}\, ,\quad x\in U.
\end{equation}
This is well known and follows easily from Lemma \ref{le-phipsi}: We have
\begin{eqnarray*}
\psi_{-w^*\mu}(x)&=&\widetilde{\varphi}_{i(-w^*\mu +\rho)}(x) \\
&=&\widetilde{\varphi}_{-w^*(i(\mu +\rho))}(x) \\
&=&\widetilde{\varphi}_{-i(\mu +\rho)}(x) \\
&=&\widetilde{\varphi}_{i(\mu+\rho)}(x^{-1}) \\
&=&\psi_\mu(x^{-1}) \\
&=&\overline{\psi_\mu (x)}
\end{eqnarray*}
It follows that $k_t(x)=\frac{1}{2} \sum_{\mu\in\Lambda^+}(\psi_\mu
(x)+\psi_{-w^{*}\mu}(x)) =\sum_{\mu\in\Lambda^+}\mathrm{Re}(\psi_\mu (x))$ and
hence $k_t(x)$ is real.

By the same argument, we see that for $x\in U$ we have
\begin{equation*}
k_t(x)=\frac{1}{2}\sum_{\mu\in\Lambda^+}(\psi_\mu (x)+\psi_{-w^*\mu}(x)) =%
\frac{1}{2}\sum_{\mu\in\Lambda^+} (\psi_\mu (x)+\psi_\mu (x^{-1}))
\end{equation*}
so $k_t(x)=k_t(x^{-1})$ on $U$. But both sides are holomorphic on $U_\mathbb{%
C}$ and therefore agree on $U_\mathbb{C}$.
\end{proof}

Define
\begin{equation*}
\mathcal{F}_t(\Lambda^+):=\left\{a :\Lambda^+\to\prod_{\mu\in\Lambda^+}
V_\mu\,\left|\,  a(\mu)\in V_\mu\text{ and }
\sum_{\mu\in \Lambda^+} d(\mu )e^{2t\langle \mu+2\rho ,\mu\rangle} \|a(\mu
)\|^2_\mu<\infty\right.\right\}\, .
\end{equation*}
Then $\mathcal{F}_t (\Lambda^+)$ is a Hilbert space with inner product
\[\langle
a, b\rangle_{\mathcal{F}} = \sum_{\mu\in \Lambda^+} d(\mu )e^{2t\langle
\mu+2\rho ,\mu\rangle} \langle a(\mu ),b(\mu )\rangle_\mu\]
and $U$ acts
unitarily on $\mathcal{F}_t(\Lambda^+)$ by
\begin{equation*}
[\sigma_{t}(x)(a)](\mu) = \pi_\mu (x)a(\mu )\, .
\end{equation*}

\begin{theorem}
\label{th-SeqSp}
We have the followings.
\begin{enumerate}
\item For $\mu\in\Lambda^+$ let $v^1_\mu,\ldots ,v^{d(\mu )}_\mu$ be an
orthonormal basis for $V_\mu$. Let $\widetilde{\pi}_\mu^j=\widetilde{\pi}%
^{v^j_\mu}_\mu$. Then $\displaystyle{\left\{\left. \sqrt{d(\mu )}
e^{-t\langle \mu+2\rho ,\mu\rangle}\, \widetilde{\pi}^{j}_\mu \, \right|\,
\mu\in\Lambda^+\, ,\,\, i=1,\ldots , d(\mu)\right\}}$ is an orthonormal
basis for $\mathcal{H}_t(M_\mathbb{C})$.

\item The map $a\mapsto \sum_{\mu\in\Lambda^+}d(\mu) \widetilde{\pi}^{a(\mu)}_\mu$ is a unitary $U$-isomorphism, $\mathcal{F}_t(\Lambda^+)\simeq
\mathcal{H}_t(M_\mathbb{C})$.

\item The set $\{\sqrt{d(\mu )}e^{-t\langle \mu+2\rho ,\mu\rangle}\,
\widetilde{\psi}_\mu\mid \mu \in \Lambda^+\}$ is an orthonormal basis for $%
\mathcal{H}_t(M_\mathbb{C})^K$.

\item $\mathcal{H}_t(M_\mathbb{C})^K$ is isometrically isomorphic to the
sequence space
\begin{equation*}
\left\{ a: \Lambda^+ \to \mathbb{C} \, \left|\, \sum_{\mu\in\Lambda^+} d(\mu )e^{2t\langle
\mu +2\rho, \mu\rangle}|a(\mu)|^2 < \infty \right.\right\}\, .
\end{equation*}
The isomorphism is given by $a\mapsto \sum_{\mu\in\Lambda^+}d(\mu) a(\mu )%
\widetilde{\psi}_\mu$.
\end{enumerate}
\end{theorem}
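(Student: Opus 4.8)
The plan is to deduce Theorem \ref{th-SeqSp} directly from Lemma \ref{focknorm} together with the decomposition $\mathcal{H}_t(M_\mathbb{C}) = \bigoplus_{\mu\in\Lambda^+}\mathcal{H}_t(M_\mathbb{C})_\mu$ established in the proof of Theorem \ref{th-PlancherelForHt}. For part (1), the orthogonality $\mathcal{H}_t(M_\mathbb{C})_\mu \perp \mathcal{H}_t(M_\mathbb{C})_\delta$ for $\mu\neq\delta$ is already in hand, so it remains only to check that within a fixed $\mu$-block the indicated functions form an orthonormal basis. This is immediate from Lemma \ref{focknorm}: taking $v = v^i_\mu$, $w = v^j_\mu$ gives $\langle \widetilde{\pi}^i_\mu, \widetilde{\pi}^j_\mu\rangle_t = \delta_{ij}\, e^{2t\langle\mu+2\rho,\mu\rangle}/d(\mu)$, so multiplying by $\sqrt{d(\mu)}\,e^{-t\langle\mu+2\rho,\mu\rangle}$ normalizes them, and since $\{v^i_\mu\}$ spans $V_\mu$ and $u\mapsto \widetilde{\pi}^u_\mu$ is linear (and injective on $V_\mu$ by Lemma \ref{HoloRestr}, or by the norm computation), these span $\mathcal{H}_t(M_\mathbb{C})_\mu$. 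Thus the union over $\mu$ is an orthonormal basis.

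For part (2), I would define the map $T : \mathcal{F}_t(\Lambda^+)\to\mathcal{H}_t(M_\mathbb{C})$ by $T(a) = \sum_{\mu}d(\mu)\widetilde{\pi}^{a(\mu)}_\mu$ and verify three things: that the series converges in $\mathcal{H}_t(M_\mathbb{C})$, that $T$ is isometric, and that $T$ is surjective and $U$-equivariant. Convergence and isometry are a single Parseval computation: writing $a(\mu) = \sum_i a_\mu^i v^i_\mu$, one has $d(\mu)\widetilde{\pi}^{a(\mu)}_\mu = \sum_i d(\mu) a_\mu^i \widetilde{\pi}^i_\mu$, and expressing this in terms of the orthonormal basis from (1) one finds $\|T(a)\|_t^2 = \sum_{\mu,i} d(\mu) e^{2t\langle\mu+2\rho,\mu\rangle}|a_\mu^i|^2 = \|a\|_{\mathcal{F}}^2$, which is finite precisely because $a\in\mathcal{F}_t(\Lambda^+)$; the partial sums form a Cauchy sequence by the same estimate, so the series converges. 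Surjectivity is clear since the image contains the orthonormal basis of (1). Equivariance: $T(\sigma_t(x)a) = \sum_\mu d(\mu)\widetilde{\pi}^{\pi_\mu(x)a(\mu)}_\mu$, and one checks from the definition \eqref{eq-holoext-1} that $\widetilde{\pi}^{\pi_\mu(x)u}_\mu = \tau_t(x)\widetilde{\pi}^u_\mu$, i.e.\ the holomorphic extension intertwines the $V_\mu$-action with the left-regular action on $\mathcal{H}_t$; alternatively this follows since $R$ (restriction to $M$) is $U$-equivariant and matches these actions on the dense image.

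For parts (3) and (4), I would restrict attention to the $K$-fixed vectors. Since $\tau_t$ acts unitarily and $R$ intertwines it with $L$, we have $\mathcal{H}_t(M_\mathbb{C})^K = \bigoplus_\mu (\mathcal{H}_t(M_\mathbb{C})_\mu)^K$, and $(\mathcal{H}_t(M_\mathbb{C})_\mu)^K$ corresponds under $u\mapsto\widetilde{\pi}^u_\mu$ to $V_\mu^K = \mathbb{C}e_\mu$, which is one-dimensional by the Cartan--Helgason theorem as recalled in Section \ref{s2}. Since $\widetilde{\pi}^{e_\mu}_\mu = \widetilde{\psi}_\mu$ (directly from \eqref{eq-holoext-1} and \eqref{eqSphfct}), the norm formula in Lemma \ref{focknorm} with $v = w = e_\mu$ gives $\|\widetilde{\psi}_\mu\|_t^2 = e^{2t\langle\mu+2\rho,\mu\rangle}/d(\mu)$, so $\{\sqrt{d(\mu)}e^{-t\langle\mu+2\rho,\mu\rangle}\widetilde{\psi}_\mu\}$ is an orthonormal basis for $\mathcal{H}_t(M_\mathbb{C})^K$, proving (3); part (4) is then the scalar version of (2), with the same Parseval argument, and surjectivity again from (3). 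The main obstacle, such as it is, is purely bookkeeping: making the interchange of summation in the isometry computation for (2) rigorous — i.e.\ justifying that $\|\sum_\mu d(\mu)\widetilde{\pi}^{a(\mu)}_\mu\|_t^2 = \sum_\mu d(\mu)^2\|\widetilde{\pi}^{a(\mu)}_\mu\|_t^2$ — but this is controlled by the orthogonality of the $\mu$-blocks together with the within-block orthonormality from (1), so no real analytic difficulty arises; everything reduces to the already-proven Lemma \ref{focknorm}.
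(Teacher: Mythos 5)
Your proposal is correct and follows essentially the same route as the paper: both reduce all four parts to Lemma \ref{focknorm} together with the orthogonal block decomposition $\mathcal{H}_t(M_\mathbb{C})=\bigoplus_{\mu\in\Lambda^+}\mathcal{H}_t(M_\mathbb{C})_\mu$ from the proof of Theorem \ref{th-PlancherelForHt}, with (3) and (4) obtained by passing to $K$-fixed vectors via $\widetilde{\psi}_\mu=\widetilde{\pi}_\mu^{e_\mu}$. The only minor difference is that for the convergence of the series in (2) you appeal to completeness of $\mathcal{H}_t(M_\mathbb{C})$ and continuity of point evaluations (Theorem \ref{th-propHt}), whereas the paper invokes Theorem \ref{th-holext} to get uniform convergence on compacta; both justifications are adequate.
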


\begin{proof}
(1) This follows from Theorem \ref{th-PlancherelForHt} and Lemma \ref%
{focknorm}.

(2) For $a\in\mathcal{F}_t(\Lambda^+ )$ define $F :=
\sum_{\mu\in\Lambda^+}d(\mu )\widetilde\pi^{a(\mu )}_\mu$. Let $%
v_\mu=e^{t\langle \mu+2\rho,\mu\rangle}a(\mu )$. Then the sequence $%
\{\|v_\mu\|_\mu\}$ is bounded. As $F=\sum_{\mu \in\Lambda^+} d(\mu
)e^{-t\langle \mu +2\rho,\mu\rangle }\widetilde \pi_\mu^{v_\mu}$ it follows
from Lemma \ref{th-holext} that the series converges and that $F$ is
holomorphic. Furthermore,
\begin{eqnarray*}
\|F\|_t^2&=&\sum_{\mu\in\Lambda^+} d(\mu)^2\|\widetilde\pi_\mu^{a(\mu
)}\|_t^2 \\
&=&\sum_{\mu\in\Lambda^+} d(\mu)^2 e^{2t\langle
\mu+2\rho,\mu\rangle}d(\mu)^{-1}\|a (\mu )\|^2_\mu \\
&=&\sum_{\mu\in\Lambda^+} d(\mu) e^{2t\langle \mu+2\rho,\mu\rangle}\|a (\mu
)\|^2_\mu<\infty\, .
\end{eqnarray*}
Hence $F\in\mathcal{H}_t(M_\mathbb{C})$ and $a\mapsto F$ is an isometry.
Now, let $F\in\mathcal{H}_t(M_\mathbb{C})$. By Theorem \ref%
{th-PlancherelForHt} $F=\sum_{\mu }d (\mu )e^{-2t\langle \mu +2\rho, \mu
\rangle }\langle F,\widetilde\pi^j_\mu\rangle_t \widetilde\pi^i_\mu$ and $%
\|F\|^2=\sum_{\mu\in\Lambda^+} d(\mu )e^{-2t\langle \mu +2\rho, \mu \rangle
}|\langle F,\widetilde\pi^j_\mu\rangle_t|^2<\infty$. Letting
\begin{equation*}
a(\mu )=\sum_{j=1}^{d(\mu )} e^{-2t\langle \mu +2\rho ,\mu\rangle }\langle
F,\widetilde\pi^j_\mu\rangle_t\, v^j_\mu
\end{equation*}
we get
\begin{equation*}
\sum_{\mu\in\Lambda^+} d (\mu )e^{2t\langle \mu +2\rho , \mu\rangle}\|a (\mu
)\|^2= \|F\|_t^2<\infty
\end{equation*}
and $F = \sum_{\mu\in\Lambda^+}d(\mu )\widetilde\pi^{a(\mu )}_\mu$. Hence $%
a\mapsto F$ is a unitary isomorphism. That this map is an intertwining
operator follows from the equation
\begin{equation*}
\widetilde{\pi}_\mu^v(x^{-1}y)= \widetilde{\pi}_\mu^{\mu_\mu (x)v}(y)\, .
\end{equation*}

(3) and (4) now follows as $\widetilde{\psi}_\mu =\widetilde{\pi}^{e_\mu}_\mu
$.
\end{proof}

\section{Segal-Bargmann Transforms on $L^2(M)$ and $L^2(M)^K$}\label{s4}

\noindent In this section, we introduce the heat equation and the heat semigroup $%
e^{t\Delta}$. We show that if $f\in L^2(M)$ then $e^{t\Delta}f$ extends to the holomorphic
function $H_tf$ on $M_\mathbb{C}$ and $H_tf\in\mathcal{H}_t(M_\mathbb{C})$. Then we
show that the map $H_t:L^2(M) \to \mathcal{H}_t(M_\mathbb{C})$, $H_t(f) = H_tf$, is the unitary
isomorphism $U_t$ in the proof of Theorem \ref{th-PlancherelForHt}. The
isomorphism $H_t :L^2(M)\to \mathcal{H}_t (M_\mathbb{C})$ was first
established in \cite{Hall94} and \cite{Stenzel}. A different proof was later
given by Faraut in \cite{Faraut1} using Gutzmer's formula. In \cite{HZ09} the Restriction Principle was used. Our proof is also based on the Restriction Principle and uses some ideas from \cite{Faraut1}. The $K$-invariant case was treated in Chapter 4 of \cite{Wiboonton}.

The heat equation on $M$ is the Cauchy problem
\begin{align*}
\Delta u(x,t) &= \frac{\partial u}{\partial t}(x,t)\, , \quad (x,t) \in M
\times(0,\infty) \\
\lim_{t\rightarrow 0^+}u(x,t) &= f(x)\, ,\quad f \in L^2(M)\text{ (the
initial condition)}  \notag
\end{align*}
where $\Delta$ is the Laplace-Beltrami operator on $M$ defined by $\langle
\cdot, \cdot \rangle$. $\Delta$ is a self adjoint negative operator on $M$
and a solution to the heat equation is give by the \textit{heat semigroup} $e^{t\Delta}$ applied to $f$, $u(\cdot ,t)= e^{t\Delta}f$.

\begin{lemma}
\label{eigenvalues} $\Delta$ acts on $L^2(M)_\mu$ by $-\langle \mu +2\rho,
\mu \rangle\mathrm{Id} $.
\end{lemma}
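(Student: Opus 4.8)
The plan is to use the Fourier-analytic description of $L^2(M)_\mu$ from Theorem \ref{th-Plancherel1} together with Lemma \ref{le-phipsi}, which identifies $\widetilde{\psi}_\mu$ with the noncompact spherical function $\widetilde{\varphi}_{i(\mu+\rho)}$. Recall that $L^2(M)_\mu$ is the irreducible $U$-submodule spanned by the matrix coefficients $\pi_\mu^u$, $u \in V_\mu$, and that $L^2(M)_\mu^K = \C\psi_\mu$. Since $\Delta$ is a $U$-invariant differential operator and $L^2(M)_\mu$ is an irreducible $U$-module on which $\Delta$ acts (it preserves each isotypic component and is, after restriction to smooth vectors, self-adjoint), Schur's lemma forces $\Delta$ to act on $L^2(M)_\mu$ by a scalar $c_\mu$. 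So the whole content of the lemma is the computation $c_\mu = -\langle \mu + 2\rho, \mu\rangle$.

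First I would reduce to the $K$-invariant vector: it suffices to show $\Delta \psi_\mu = -\langle \mu + 2\rho, \mu \rangle \psi_\mu$, since $\psi_\mu \in L^2(M)_\mu$ and the action is scalar. The key point is that under the identification $M = U/K$ with its $U$-invariant metric coming from the trace form, the Laplace–Beltrami operator $\Delta$ corresponds, via the Flensted-Jensen / compact-noncompact duality, to (the negative of) the radial part of the Laplacian on the noncompact dual $M^d = G/K$. Concretely, the spherical function $\psi_\mu$ on $M$ is the restriction to $M$ of the holomorphic function $\widetilde{\psi}_\mu = \widetilde{\varphi}_{i(\mu+\rho)}$ on $M_\C$, and the noncompact spherical function $\varphi_\lambda$ on $M^d$ satisfies $\Delta^d \varphi_\lambda = -(\lambda^2 + \rho^2)\varphi_\lambda$ (this is exactly the eigenvalue relation recorded just before Theorem \ref{int-form}, namely $h_t * \varphi_\lambda = e^{-t(\lambda^2 + \rho^2)}\varphi_\lambda$, differentiated at $t = 0$). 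Substituting $\lambda = i(\mu + \rho)$ gives eigenvalue $-((i(\mu+\rho))^2 + \rho^2) = (\mu+\rho)^2 - \rho^2 = \mu^2 + 2\mu\cdot\rho = \langle \mu + 2\rho, \mu\rangle$. Since the duality intertwines $\Delta$ on $M$ with $-\Delta^d$ on $M^d$ (the metric on $i\mathfrak{s} \cong T_oM^d$ is the trace form, matching the metric on $\mathfrak{s} \cong T_oM$ up to the sign built into the involution $X \mapsto iX$), the eigenvalue on $M$ is $-\langle \mu + 2\rho, \mu\rangle$, as claimed.

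Alternatively — and this is probably the cleanest route to write up, avoiding any delicate sign-tracking through Flensted-Jensen — I would argue directly on $U$. The Laplace–Beltrami operator $\Delta$ on $M = U/K$ lifts to the Casimir element $\Omega$ of $\mathfrak{u}$ (with respect to $-\Tr(XY)$) acting on right-$K$-invariant functions on $U$, up to sign: $\Delta f(gK) = -\Omega f(g)$ under our conventions, where the sign is fixed by $\Delta$ being negative. Casimir acts on the irreducible representation $\pi_\mu$ by the scalar $\langle \mu, \mu + 2\delta_{\mathfrak u}\rangle$ with $\delta_{\mathfrak u}$ the half-sum of positive roots of $\mathfrak u$; one then checks, using the Cartan–Helgason description of spherical weights and the relation between $\delta_{\mathfrak u}$ and the restricted-root $\rho$, that $\langle \mu, \mu + 2\delta_{\mathfrak u}\rangle = \langle \mu, \mu + 2\rho\rangle$ for $\mu \in \Lambda^+$ (the components of $\delta_{\mathfrak u}$ orthogonal to $\mathfrak a$ pair trivially with $\mu \in i\mathfrak a^*$, and the restricted part is exactly $\rho$). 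This gives $\Delta$ acting on $L^2(M)_\mu$ by $-\langle \mu + 2\rho, \mu\rangle$.

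The main obstacle is purely bookkeeping: getting the sign and normalization right in the correspondence "$\Delta$ on $M$" $\leftrightarrow$ "$-\Omega$ on $U$" $\leftrightarrow$ "$-(\lambda^2 + \rho^2)$ for $\varphi_\lambda$ on $M^d$", and verifying that the restricted-root $\rho$ appearing in the statement is precisely the quantity that shows up when one projects the $\mathfrak u$-Weyl-vector onto $i\mathfrak a^*$. Once the normalization is pinned down (and it is forced by the requirements that $\Delta$ be negative self-adjoint and that the metric be the trace form, both already fixed in Section \ref{s1}), the scalar is determined by Schur's lemma and the computation above. I would actually just cite the eigenvalue relation $h_t * \varphi_\lambda = e^{-t(\lambda^2+\rho^2)}\varphi_\lambda$ already stated in the text, combine it with Lemma \ref{le-phipsi}, and note that $h_t$ on $M^d$ corresponds to the heat semigroup whose generator is dual to $\Delta$, so differentiating in $t$ at $t = 0$ yields the claim with no further work.
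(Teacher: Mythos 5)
Your argument is correct, but it is a genuinely different route from the paper's. The paper's proof is a two-line reduction: it quotes the classical fact that the Laplace--Beltrami operator for the \emph{Killing-form} metric acts on $L^2(M)_\mu$ by $-\langle\mu+2\rho,\mu\rangle$, and then observes that the formula is normalization-consistent, since rescaling the inner product on $\mathfrak{s}$ by $c$ rescales both $\Delta$ and the dual inner product on $\mathfrak{s}^*$ (hence $\langle\mu+2\rho,\mu\rangle$) by $1/c$, so the statement survives the passage to the trace form. You instead \emph{derive} the eigenvalue: Schur's lemma reduces to $\psi_\mu$, and then either (i) the identification $\widetilde\psi_\mu=\widetilde\varphi_{i(\mu+\rho)}$ of Lemma \ref{le-phipsi} together with $\Delta^d\varphi_\lambda=-(\lambda^2+\rho^2)\varphi_\lambda$, or (ii) the Casimir computation $\langle\mu,\mu+2\delta_{\mathfrak u}\rangle=\langle\mu,\mu+2\rho\rangle$. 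Both routes are sound; the Schur step is justified because $u\mapsto\Delta\pi_\mu^u$ is a $U$-intertwiner into the multiplicity-one isotypic component. The only place where route (i) is thinner than it should be is the claim that duality ``intertwines $\Delta$ with $-\Delta^d$'': the clean statement is that the Casimir of $\mathfrak{u}_\mathbb{C}$ with respect to the complex-bilinear trace form defines one holomorphic invariant operator on $M_\mathbb{C}$ whose restriction to $M$ is $\Delta$ and whose restriction to $M^d$ is $-\Delta^d$ (the form is negative definite on $i\mathfrak{s}$), after which the eigenvalue transfers between the two real forms by holomorphy, as in Lemma \ref{HoloRestr}. Your route (ii) avoids this sign-tracking entirely and is the closest in spirit to the classical citation the paper relies on; what your write-up buys over the paper's is self-containedness and the explicit link to the noncompact spherical data already used in Section \ref{s3}, at the cost of the bookkeeping you yourself flag. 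Note also that the paper's scaling remark implicitly uses that on each irreducible factor any two invariant forms are proportional; your derivation works with the trace form throughout and so does not need that reduction.
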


\begin{proof}
This is well know for the Laplace-Beltrami operator constructed by the
Killing form metric. But scaling the inner product on $\mathfrak{s}$ by a
constant $c>0$, results in scaling the Laplace-Beltrami operator as well as
the inner product on $\mathfrak{s}^*$ by $1/c$.
\end{proof}

\begin{lemma}
\label{le-Heatonf} Let $f\in L^2(M)$. Write $f=\sum_{\mu\in\Lambda^+} f_\mu$
with $f_\mu = d(\mu ) f* \widetilde{\psi}_\mu \in L^2(M)_\mu\subset C^\infty
(M)$. Then
\begin{equation}  \label{eq-etDelta}
e^{t\Delta}f=\sum_{\mu\in\Lambda^+}e^{-t\langle \mu +2\rho
,\mu\rangle}f_\mu\, .
\end{equation}
\end{lemma}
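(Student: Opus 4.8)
The plan is to reduce the statement to the action of $\Delta$ on each isotypic component $L^2(M)_\mu$, where everything is finite-dimensional and explicit. First I would recall from Theorem \ref{th-Plancherel1} that for $f \in L^2(M)$ we have the orthogonal decomposition $f = \sum_{\mu \in \Lambda^+} f_\mu$ with $f_\mu = d(\mu) f * \widetilde{\psi}_\mu \in L^2(M)_\mu$, and that this sum converges in $L^2(M)$. The heat semigroup $e^{t\Delta}$ is defined by the spectral theorem for the self-adjoint operator $\Delta$: since $L^2(M) = \bigoplus_{\mu} L^2(M)_\mu$ is the decomposition into eigenspaces of $\Delta$, with eigenvalue $-\langle \mu + 2\rho, \mu\rangle$ on $L^2(M)_\mu$ by Lemma \ref{eigenvalues}, the operator $e^{t\Delta}$ acts as the bounded operator $\sum_\mu e^{-t\langle \mu+2\rho,\mu\rangle} P_\mu$, where $P_\mu$ is the orthogonal projection onto $L^2(M)_\mu$.

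Then I would simply verify that this matches the claimed formula. Since $\langle \mu + 2\rho, \mu\rangle \ge 0$ for all $\mu \in \Lambda^+$ (as $\mu$ and $\rho$ are dominant), each factor $e^{-t\langle \mu+2\rho,\mu\rangle} \le 1$, so the series $\sum_\mu e^{-t\langle \mu+2\rho,\mu\rangle} f_\mu$ converges in $L^2(M)$ whenever $\sum_\mu \|f_\mu\|_2^2 < \infty$, i.e.\ for every $f \in L^2(M)$, and its $L^2$-norm is bounded by $\|f\|_2$. Applying $P_\mu$ to both sides and using $P_\mu f = f_\mu$ gives that the right-hand side of (\ref{eq-etDelta}) is precisely the function whose $\mu$-component is $e^{-t\langle\mu+2\rho,\mu\rangle} f_\mu$, which is the definition of $e^{t\Delta} f$ via functional calculus. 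One should also note that $e^{t\Delta} f$ solves the heat equation with initial data $f$, which again is immediate from differentiating the series term-by-term (justified because for $t$ bounded away from $0$ the factors $\langle\mu+2\rho,\mu\rangle^k e^{-t\langle\mu+2\rho,\mu\rangle}$ decay and $d(\mu)$ grows only polynomially, so one has convergence in $C^\infty(M)$ as in Theorem \ref{th-Plancherel1}).

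There is essentially no hard obstacle here; the statement is a bookkeeping consequence of the spectral decomposition of $\Delta$ combined with the Peter--Weyl/Plancherel decomposition already established. The only point requiring a word of care is the interchange of $e^{t\Delta}$ with the infinite sum, which is legitimate because $e^{t\Delta}$ is bounded (indeed a contraction) on $L^2(M)$ and the partial sums $\sum_{\mu \in S} f_\mu$ converge to $f$; continuity of $e^{t\Delta}$ then gives $e^{t\Delta} f = \lim_S \sum_{\mu\in S} e^{t\Delta} f_\mu = \lim_S \sum_{\mu \in S} e^{-t\langle\mu+2\rho,\mu\rangle} f_\mu$, which is (\ref{eq-etDelta}).
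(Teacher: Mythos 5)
Your proof is correct and follows the same route as the paper, which simply cites Lemma \ref{eigenvalues} (the eigenvalue $-\langle\mu+2\rho,\mu\rangle$ of $\Delta$ on $L^2(M)_\mu$) together with the spectral definition of $e^{t\Delta}$; you have merely spelled out the convergence and functional-calculus bookkeeping that the paper leaves implicit.
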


\begin{proof}
This follows from Lemma \ref{eigenvalues}.
\end{proof}

We call the map $H_t : L^2(M)\to \mathcal{O} (M_\mathbb{C})$ the \textit{Heat
transform} or the \textit{Segal-Bargmann} transform on $L^2(M)$.

\begin{theorem}
\label{eq-HtIso} If $f\in L^2(M)$ then $H_t(f)\in \mathcal{H}_t (M_\mathbb{C}%
)$ and $H_t:L^2(M)\to \mathcal{H}_t(M_\mathbb{C}) $ is a unitary
$U$-isomorphism. Furthermore

\begin{enumerate}
\item Let $h_t=k_{t/2}$. Then $H_tf (z)= (f*h_t)(z)$, see (\ref{def-convolution}).

\item $H_t=U_t$ where $U_t:L^2(M)\to \mathcal{H}_t(M_\mathbb{C})$ is the
unitary isomorphism from the proof of Theorem \ref{th-PlancherelForHt}.
\end{enumerate}
\end{theorem}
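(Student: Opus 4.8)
The plan is to compute the Fourier expansion of $H_tf$ and compare it with the sequence-space description of $\cH_t(M_\C)$ from Theorem \ref{th-SeqSp}, then identify the resulting map with $U_t$. First I would take $f\in L^2(M)$ with Fourier expansion $f=\sum_{\mu\in\Lambda^+}f_\mu$, where by Theorem \ref{th-Plancherel1}(1) we have $f_\mu=d(\mu)\langle\widehat f_\mu,\pi_\mu(\cdot)e_\mu\rangle_\mu=\pi_\mu^{d(\mu)\widehat f_\mu}$, and recall from Theorem \ref{th-Plancherel1}(2) that $f_\mu$ has holomorphic extension $\widetilde f_\mu(z)=d(\mu)\langle\widehat f_\mu,\pi_\mu(\bar z)e_\mu\rangle=\widetilde\pi_\mu^{d(\mu)\widehat f_\mu}(z)$. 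By Lemma \ref{le-Heatonf}, $e^{t\Delta}f=\sum_\mu e^{-t\langle\mu+2\rho,\mu\rangle}f_\mu$, so formally the holomorphic extension is
\begin{equation*}
H_tf(z)=\sum_{\mu\in\Lambda^+}e^{-t\langle\mu+2\rho,\mu\rangle}\widetilde f_\mu(z)=\sum_{\mu\in\Lambda^+}d(\mu)e^{-t\langle\mu+2\rho,\mu\rangle}\widetilde\pi_\mu^{\widehat f_\mu}(z)\, .
\end{equation*}

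The next step is to justify that this series actually converges to a holomorphic function on $M_\C$ and that $H_tf\in\cH_t(M_\C)$. For convergence I would apply Theorem \ref{th-holext}: since $\|\widehat f_\mu\|_\mu\le\|f\|_2$ (Parseval, as in Theorem \ref{th-Plancherel1}), the coefficients $v(\mu)=\widehat f_\mu$ satisfy a bound $\|v(\mu)\|_\mu\le R e^{S\|\mu\|}$ with $R=\|f\|_2$ and any $S>0$; with $s=t$ the hypothesis of Theorem \ref{th-holext} is met, so the series defines a holomorphic function on $M_\C$, uniformly bounded on compacta by $C(L)\|f\|_2$. For membership in $\cH_t(M_\C)$ and computation of the norm, use Lemma \ref{focknorm}: the functions $\widetilde\pi_\mu^v$ are mutually orthogonal across distinct $\mu$ with $\|\widetilde\pi_\mu^v\|_t^2=d(\mu)^{-1}e^{2t\langle\mu+2\rho,\mu\rangle}\|v\|_\mu^2$, hence
\begin{equation*}
\|H_tf\|_t^2=\sum_{\mu\in\Lambda^+}d(\mu)^2e^{-2t\langle\mu+2\rho,\mu\rangle}\|\widetilde\pi_\mu^{\widehat f_\mu}\|_t^2=\sum_{\mu\in\Lambda^+}d(\mu)\|\widehat f_\mu\|_\mu^2=\|f\|_2^2\, ,
\end{equation*}
where the last equality is the Plancherel identity of Theorem \ref{th-Plancherel1}. (One should check the interchange of the $L^2(M_\C,\mu_t)$-inner product with the infinite sum; this is standard given the uniform-on-compacta convergence and the orthogonality, or one first works on a dense subspace of finite Fourier sums and extends by continuity of $R^*$ below.) This simultaneously shows $H_t$ is a well-defined isometry $L^2(M)\to\cH_t(M_\C)$.

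For surjectivity and part (2), I would invoke the proof of Theorem \ref{th-PlancherelForHt}: there $U_t=R^*(RR^*)^{-1/2}$ where $RF=F|_M$, and $\cH_t(M_\C)_\mu=\{\widetilde\pi_\mu^u:u\in V_\mu\}=U_t(L^2(M)_\mu)$. Since $H_t$ maps $L^2(M)_\mu$ into $\cH_t(M_\C)_\mu$ (clear from the expansion above, as $\widetilde\pi_\mu^{\widehat f_\mu}\in\cH_t(M_\C)_\mu$), is $U$-equivariant (from $\widehat{L(a)f}_\mu=\pi_\mu(a)\widehat f_\mu$ together with $\widetilde\pi_\mu^v(a^{-1}\cdot y)=\widetilde\pi_\mu^{\pi_\mu(a)v}(y)$), and is an isometry, and since each $\cH_t(M_\C)_\mu$ is $U$-irreducible with multiplicity one, Schur's lemma forces $H_t|_{L^2(M)_\mu}$ to be a scalar multiple of $U_t|_{L^2(M)_\mu}$; matching norms (both are isometries) makes the scalar unimodular, and matching on the specific vector $e_\mu$—computing $H_t(\psi_\mu)=d(\mu)e^{-t\langle\mu+2\rho,\mu\rangle}\widetilde\pi_\mu^{e_\mu}$ versus the known action of $U_t$ on $\psi_\mu$ from the end of the proof of Theorem \ref{th-PlancherelForHt}—pins the scalar to $1$. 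Hence $H_t=U_t$, which is unitary, proving the theorem; part (1), $H_tf=f*h_t$ with $h_t=k_{t/2}$, then follows by using Theorem \ref{th-PlancherelForHt}'s formula $K_{t/2}(z,w)=\sum_\mu d(\mu)e^{-t\langle\mu+2\rho,\mu\rangle}\widetilde\psi_\mu(w^*z)$ and the convolution identity $f*\widetilde\psi_\mu=f_\mu/d(\mu)$ from the Lemma preceding Section \ref{s3}. The main obstacle is the rigorous interchange of summation and integration needed to establish $\|H_tf\|_t=\|f\|_2$ directly; I expect to circumvent it by first proving everything on the dense subspace of finite Fourier series (where all sums are finite) and then extending $H_t$ by continuity, observing that the extension must agree with the holomorphic function produced by Theorem \ref{th-holext} because of the uniform-on-compacta bounds.
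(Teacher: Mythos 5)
Your computation $H_tf=\sum_{\mu\in\Lambda^+}d(\mu)e^{-t\langle\mu+2\rho,\mu\rangle}\widetilde{\pi}_\mu^{\widehat f_\mu}$, the convergence via Theorem \ref{th-holext}, the isometry via Lemma \ref{focknorm} and Theorem \ref{th-Plancherel1} (this is exactly what Theorem \ref{th-SeqSp}(2) packages, and the paper simply cites it), and your derivation of (1) from $k_{t/2}=\sum_\mu d(\mu)e^{-t\langle\mu+2\rho,\mu\rangle}\widetilde{\psi}_\mu$ together with $f*\widetilde{\psi}_\mu=d(\mu)^{-1}f_\mu$ all follow the paper's route. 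The genuine gap is in your proof of (2). Schur's lemma does give $H_t|_{L^2(M)_\mu}=c_\mu\,U_t|_{L^2(M)_\mu}$ with $|c_\mu|=1$, but your device for pinning $c_\mu=1$ --- comparing $H_t\psi_\mu$ with ``the known action of $U_t$ on $\psi_\mu$'' --- does not work: the proof of Theorem \ref{th-PlancherelForHt} never computes $U_t\psi_\mu$. There $U_t$ is defined purely abstractly through the polar decomposition $R^*=U_t\sqrt{RR^*}$, and the only facts extracted are $U_t(L^2(M)_\mu)=\cH_t(M_\C)_\mu$ and the reproducing kernel; so there is nothing ``known'' to match against, and a phase ambiguity survives on every isotypic component. (A minor slip besides: $\widehat{(\psi_\mu)}_\mu=d(\mu)^{-1}e_\mu$, so $H_t\psi_\mu=e^{-t\langle\mu+2\rho,\mu\rangle}\widetilde{\psi}_\mu$, without the extra factor $d(\mu)$.)

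To remove the ambiguity you need more than unitarity and $U$-equivariance of $U_t$: either the explicit computation of $RR^*$ or the positivity built into the polar decomposition. The paper does the former: from the reproducing kernel, $RR^*f=f*k_t$, which by Lemma \ref{le-Heatonf} is $e^{2t\Delta}f$, hence $\sqrt{RR^*}=e^{t\Delta}$; then for $g=e^{t\Delta}f$ in the dense image, $RU_tg=RR^*f=e^{t\Delta}g=R(H_tg)$, and since $U_tg$ and $H_tg$ are holomorphic and agree on the totally real submanifold $M$ (Lemma \ref{HoloRestr}) they agree on $M_\C$; density and continuity give $U_t=H_t$. Alternatively, within your component-wise scheme you could argue: $R\widetilde{\pi}^u_\mu=\pi^u_\mu$, so $\langle R^*\pi^u_\mu,\widetilde{\pi}^u_\mu\rangle_t=\|\pi^u_\mu\|_2^2>0$, whence $R^*|_{L^2(M)_\mu}$ is a \emph{positive} multiple of the natural intertwiner $\pi^u_\mu\mapsto\widetilde{\pi}^u_\mu$; since $\sqrt{RR^*}$ is a positive operator, $U_t|_{L^2(M)_\mu}$ is a positive multiple of that intertwiner, and unitarity then forces $c_\mu=1$. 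Either repair closes the argument; note also that in your write-up surjectivity of $H_t$ is inherited from $H_t=U_t$, so it too depends on closing this gap (though it also follows directly from Theorem \ref{th-SeqSp}).
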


\begin{proof}
We have
\begin{equation*}
H_tf=\sum_{\mu\in\Lambda^+} d(\mu )e^{-t\langle \mu +2\rho,\mu\rangle }
\widetilde \pi^{\widehat{f}_\mu}_\mu=\sum_{\mu\in\Lambda^+}d(\mu)%
\widetilde{\pi}_\mu^{a(\mu )}
\end{equation*}
with $a(\mu )=e^{-t\langle \mu +2\rho,\mu\rangle }\widehat{f}_\mu$. By
Theorem \ref{th-Plancherel1}
\begin{equation*}
\sum_{\mu\in\Lambda^+} d(\mu )e^{2t\langle \mu+2\rho ,\mu\rangle}\|a(\mu
)\|_\mu^2 =\sum_{\mu\in\Lambda^+}d(\mu )\|\widehat{f}_\mu\|_\mu^2\newline
=\|f\|_2^2\, .
\end{equation*}
By Theorem \ref{th-SeqSp}, $H_tf$ extends to a holomorphic function on $M_%
\mathbb{C}$, $H_tf\in\mathcal{H}_t (K_\mathbb{C})$, and $\|H_tf\|_t = \|f\|_2$%
. That $H_t$ is bijective follows easily not only from the representation theory but
also from the fact, which we will prove in a moment, that $H_t = U_t$.

(1) For $f\in L^1(M)$ and $g\in L^1(M)^K$, or $g$ holomorphic and $K$%
-invariant on $M_\mathbb{C}$, define
\begin{equation}  \label{def-convolution}
(f*g)(m):=\int_U f(x\cdot o)g(x^{-1}\cdot m)\, dx\, .
\end{equation}
We have $|f|*|\widetilde{\psi}_\mu| (z)\le e^{C_U\|\mu \|}\|f\|_2$. Hence $%
\sum_{\mu\in\Lambda^+} d (\mu )e^{-t\langle \mu +\rho, \mu\rangle }|f|*|%
\widetilde{\psi}_\mu| (z)<\infty$ and we can interchange the integration and
summation to get (with some obvious abuse of notation)
\begin{equation*}
f*h_t=\sum_{\mu\in\Lambda^+} d (\mu )e^{-t\langle \mu +2\rho, \mu\rangle }f*%
\widetilde{\psi}_\mu = \sum_{\mu\in\Lambda^+} e^{-t\langle \mu +2\rho
,\mu\rangle}f_\mu =H_tf
\end{equation*}
where the last equality follows from Lemma \ref{le-Heatonf}.

(2) We use the ideas from \cite{Olafsson}. Let $R$ and $U_t$ be as in the
proof of Theorem \ref{th-PlancherelForHt}. Let $f\in L^2(M)$. Then
\begin{equation*}
R^*f(z)=\langle R^*f,K_t(\,\cdot\, ,z)\rangle_2 =\langle f,RK_t(\,\cdot \,
,z)\rangle_2 =\int_M f(x)K_t(z,x)\, dx =\int_M f(x)k_t (x^{-1}z)\, dx\, .
\end{equation*}
In particular
\begin{equation*}
RR^*f(m)=f*k_t (m)\, .
\end{equation*}
It follows that $RR^*f=e^{2t\Delta} f$. As $s\mapsto e^{s\Delta}$ is an
operator valued semigroup it follows that $\sqrt{RR^*} =e^{t\Delta}$. The
image of $H_t=\sqrt{RR^*}$ is dense in $L^2(M)$. Let $g=H_tf\in H_t(L^2(M))$.
Then
\begin{equation*}
RU_tg = RR^*f=H_tg\, .
\end{equation*}
As $U_tg$ and $H_tg$ are holomorphic and agree on $M$ it follows that $U_tg
(z)=H_tg(z)$ on $M_\mathbb{C}$. The image of $H_t$ is dense in $L^2(M)$ and
both $U_t$ and $H_t$ are continuous, hence $U_t=H_t$.
\end{proof}

Define $F_t: \bigoplus_{\mu \in \Lambda^+,\, d}V_\mu\to \mathcal{F}%
_t(\Lambda^+) $ by $F_t(v)(\mu ):=e^{-t\langle \mu +2\rho ,\mu \rangle}
v(\mu )$.

\begin{corollary}
\label{co-HeatSeq} We have a commutative diagram of unitary $U$-isomorphisms
\begin{equation*}
\xymatrix{L^2(M) \ar[r]^{H_t} \ar[d]_{\widehat{\hbox to0.5em{}}} &
\cH_t(M_\C)\ar[d]^{F\mapsto a}\\
\IFM \ar[r]_{F_t }&\cF_t(\LK )  }\, .
\end{equation*}
\end{corollary}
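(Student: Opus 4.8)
The plan is to verify that the square commutes by chasing an arbitrary $f \in L^2(M)$ around both paths and checking that the two resulting elements of $\cF_t(\Lambda^+)$ agree. Since all four maps are unitary $U$-isomorphisms (the top by Theorem \ref{eq-HtIso}, the left by Theorem \ref{th-Plancherel1}, the bottom by definition, and the right by part (2) of Theorem \ref{th-SeqSp}), it suffices to check commutativity on a dense subspace, or indeed just to compute directly.

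First I would fix $f \in L^2(M)$ and compute the image along the lower-left path: the Fourier transform sends $f$ to $(\widehat f_\mu)_\mu \in \bigoplus_{\mu\in\Lambda^+,d} V_\mu$, and then $F_t$ sends this to the sequence $\mu \mapsto e^{-t\langle \mu+2\rho,\mu\rangle}\widehat f_\mu$ in $\cF_t(\Lambda^+)$. Next I would compute the image along the upper-right path: by Theorem \ref{eq-HtIso} (or directly from the formula in the proof of Theorem \ref{eq-HtIso}),
\[
H_t f = \sum_{\mu\in\Lambda^+} d(\mu)\,\widetilde\pi_\mu^{a(\mu)}, \qquad a(\mu) = e^{-t\langle\mu+2\rho,\mu\rangle}\widehat f_\mu,
\]
and by part (2) of Theorem \ref{th-SeqSp} the right-hand vertical arrow $F \mapsto a$ is precisely the inverse of $a \mapsto \sum_\mu d(\mu)\widetilde\pi_\mu^{a(\mu)}$, so it sends $H_t f$ back to the sequence $\mu \mapsto a(\mu) = e^{-t\langle\mu+2\rho,\mu\rangle}\widehat f_\mu$. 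These two sequences are literally identical, so the diagram commutes.

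To make this fully rigorous I would add the small bookkeeping remark that the coefficient sequence $a$ attached to $H_t f$ in part (2) of Theorem \ref{th-SeqSp} is well-defined because $\{\widetilde\pi_\mu^j\}$ is an orthonormal basis (part (1) of that theorem), so extracting coefficients is unambiguous; and that the identification of $a(\mu)$ with $e^{-t\langle\mu+2\rho,\mu\rangle}\widehat f_\mu$ is exactly the content of the first displayed equation in the proof of Theorem \ref{eq-HtIso}. One should also note that $\cF_t(\Lambda^+)$ is, by its very definition together with Theorem \ref{th-SeqSp}(2), the codomain of both composites, so comparing the two outputs as elements of $\cF_t(\Lambda^+)$ is legitimate.

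I do not expect a genuine obstacle here: the statement is essentially a repackaging of Theorem \ref{eq-HtIso}, Theorem \ref{th-Plancherel1}, and Theorem \ref{th-SeqSp}(2) in diagrammatic form, and the ``proof'' is the observation that the multiplication operator $F_t$ by $e^{-t\langle\mu+2\rho,\mu\rangle}$ is exactly what intertwines the Fourier-side description of $L^2(M)$ with the Fourier-side description of $\cH_t(M_\C)$. If I had to name the one point requiring the slightest care, it is confirming that the vertical isomorphism $\cH_t(M_\C) \to \cF_t(\Lambda^+)$ used in the diagram is the map ``$F \mapsto a$'' from Theorem \ref{th-SeqSp}(2) and not its inverse, so that the coefficient identification lines up in the right direction; once that convention is pinned down the computation is a one-liner.
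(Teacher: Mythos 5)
Your argument is correct and is essentially the paper's own proof: the paper simply cites (\ref{eq-etDelta}), Theorem \ref{th-Plancherel1}, Theorem \ref{th-SeqSp}, and Theorem \ref{eq-HtIso}, and your diagram chase — identifying both composites with the sequence $\mu\mapsto e^{-t\langle\mu+2\rho,\mu\rangle}\widehat f_\mu$ via the formula for $H_tf$ in the proof of Theorem \ref{eq-HtIso} and the coefficient map of Theorem \ref{th-SeqSp}(2) — is exactly the content behind that citation. No gaps; your remark about pinning down the direction of the vertical arrow $F\mapsto a$ is the right (and only) point of care.
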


\begin{proof}
This follows from (\ref{eq-etDelta}), Theorem \ref{th-Plancherel1}, Theorem %
\ref{th-SeqSp}, and Theorem \ref{eq-HtIso}.
\end{proof}

\section{Propagations of Compact Symmetric Spaces}\label{s5}

\noindent In this section, we describe the results, which we need later on, from \cite%
{OlafssonWolf,JWCompact} on limits of symmetric spaces and the related
representation theory and harmonic analysis. We mostly follow the
discussion and notations in \cite{OlafssonWolf}. Most of the material on
spherical representations is taking from Section 6 in \cite{OlafssonWolf, OW2}. We keep
the notations from the previous sections and indicate the dependence on the
symmetric spaces by the indices $m, n$ etc. In particular $M_n = U_n/K_n$, $n\in
\mathbb{N}$ is a sequence of simply connected symmetric spaces of compact
type. We assume that for $m\ge n$, $U_n \subseteq U_m$ and $\theta_m|_{%
\mathfrak{u}_n} = \theta_n$. Then $K_n=K_m\cap U_n$, $\mathfrak{k}_m\cap
\mathfrak{u}_n=\mathfrak{k}_n$, and $\mathfrak{s}_m\cap \mathfrak{u}_n$. We
recursively choose maximal commutative subspaces $\mathfrak{a}_m \subset
\mathfrak{s}_m$ such that $\mathfrak{a}_n =\mathfrak{a}_m\cap \mathfrak{u}_n$
for all $m \geq n$. Let $r_{n} = \dim \mathfrak{a}_{n}$ be the rank of $M_{n}
$.

As before, we let $\Sigma_{n} = \Sigma ( \mathfrak{u}_{n,\mathbb{C}},
\mathfrak{a}_{n,\mathbb{C}} )$ denote the system of restricted roots of $
\mathfrak{a}_{n,\mathbb{C}}$ in $\mathfrak{u}_{n,\mathbb{C}}$. We can--and
will--choose positive systems so that $\Sigma_n^+\subseteq \sigma_m^+|_{\mathfrak{a}_n}$. Let
\begin{equation*}
\Sigma_{1/2,n} = \left\{\alpha \in \Sigma_{n}\mid \frac{1}{2}\alpha \notin
\Sigma_{n}\right\} \text{ and } \Sigma_{2,n} = \left\{\alpha \in
\Sigma_{n}\mid 2\alpha \notin \Sigma_{n}\right\}\, .
\end{equation*}

Then $\Sigma_{1/2,n}$ and $\Sigma_{2,n}$ are reduced root systems (see Lemma
3.2, p. 456 in \cite{He78}). Consider the positive systems $%
\Sigma_{1/2,n}^{+} := \Sigma_{1/2,n} \cap \Sigma_{n}^{+}$ and $%
\Sigma_{2,n}^{+} := \Sigma_{2,n} \cap \Sigma_{n}^{+}$. Let
$
\Psi_{1/2,n}$ and $\Psi_{2,n}$
denote the sets of simple roots for $\Sigma_{1/2,n}^{+}$ and $%
\Sigma_{2,n}^{+}$ respectively.

Suppose for a moment that $M_n$ is an irreducible symmetric space for every $n$. We say that $M_m$ \textit{propagates} $M_n$ if $\Sigma_{1/2,n}=\Sigma_{1/2,m}$ or
we only add simple roots to the left end of the Dynkin
diagram for $\Psi_{1/2,n}$ to obtain the Dynkin diagram for $\Psi_{1/2,m}$.
In particular, $\Psi_{1/2,n}$ and $\Psi_{1/2,m}$ are of the same type. In
general, if $M_m\simeq M_m^1\times \ldots \times M_m^r$ and $M_n \simeq
M_n^1\times M_n^s$ with $M_m^i$ and $M_n^j$ irreducible, then $M_m$ \textit{%
propagates} $M_n$ if we can enumerate the irreducible factors $M_m^i$ and $%
M_n^j$ such that $M_m^i$ propagates $M_n^i$ for $i = 1,2,...,s$. We refer to
the discussion in Section 1 of \cite{OW2} for more details.

From now on, we assume that $M_m$ \textit{propagates} $M_n$ for all $m \geq n
$. We call the sequence $\{M_n = U_n/K_n\}$, the \textit{propagating sequence%
} of symmetric spaces of compact type. This includes sequences of symmetric
spaces from each line of the following table of classical symmetric spaces,
see \cite{He84}, Table V, page 518:

\begin{equation}  \label{symmetric-case-class}
\begin{tabular}{|c|l|l|c|c|}
\hline
\multicolumn{5}{|c|}{} \\
\multicolumn{5}{|c|}{Compact Irreducible Riemannian Symmetric $M = U/K$} \\
\multicolumn{5}{|c|}{} \\ \hline\hline
Type & \multicolumn{1}{c}{$U$} & \multicolumn{1}{|c}{$K$} &
\multicolumn{1}{|c}{rank$\, M$} & \multicolumn{1}{|c|}{dim$\, M$} \\
\hline\hline
$A_{n-1}$ & $\mathrm{SU} (n)\times \mathrm{SU} (n)$ & $\mathrm{diag\, }
\mathrm{SU} (n)$ & $n-1$ & $n^2-1$ \\ \hline
$B_{n}$ & $\mathrm{SO} (2n+1)\times \mathrm{SO} (2n+1)$ & $\mathrm{diag\, }
\mathrm{SO} (2n+1)$ & $n$ & $2n^2+n$ \\ \hline
$C_n$ & $\mathrm{Sp} (n)\times \mathrm{Sp} (n)$ & $\mathrm{diag\, } \mathrm{%
Sp} (n)$ & $n$ & $2n^2+n$ \\ \hline
$D_n$ & $\mathrm{SO} (2n)\times \mathrm{SO} (2n)$ & $\mathrm{diag\, }
\mathrm{SO} (2n)$ & $n$ & $2n^2-n$ \\ \hline
$AI$ & $\mathrm{SU} (n)$ & $\mathrm{SO} (n)$ & $n-1$ & $\frac{(n-1)(n+2)}{2}$
\\ \hline
$AII$ & $\mathrm{SU} (2n)$ & $\mathrm{Sp} (n) $ & $n-1$ & $2n^2-n-1$ \\
\hline
$AIII$ & $\mathrm{SU} (p+q)$ & $\mathrm{S} (\mathrm{U} (p)\times \mathrm{U}%
(q))$ & $\min(p,q)$ & $2pq$ \\ \hline
$BDI$ & $\mathrm{SO} (p+q)$ & $\mathrm{SO} (p) \times \mathrm{SO} (q)$ & $%
\min(p,q)$ & $pq$ \\ \hline
$DIII$ & $\mathrm{SO} (2n)$ & $\mathrm{U} (n)$ & $[\frac{n}{2}]$ & $n(n-1)$
\\ \hline
$CI$ & $\mathrm{Sp} (n)$ & $\mathrm{U} (n)$ & $n$ & $n(n+1)$ \\ \hline
$CII$ & $\mathrm{Sp} (p+q)$ & $\mathrm{Sp} (p) \times \mathrm{Sp} (q)$ & $%
\min(p,q)$ & $4pq$ \\ \hline
\end{tabular}%
\end{equation}
But we can also include an inclusion like
\begin{equation*}
\mathnormal{SU}(n)/\mathnormal{SO}(n) \subset (\mathnormal{SU}(n) \times
\mathnormal{SU}(n))/\mathrm{diag}(\mathnormal{SU}(n) \times \mathnormal{%
SU}(n)).
\end{equation*}

Now, let $\Psi_{2,n} = \{\alpha_{n,1} , \ldots , \alpha_{n,r_{n}}\}$.
According to the root systems discussed in Section 2
of \cite{OlafssonWolf}, we can choose the ordering so that for $j \leq r_{n}$,
$\alpha_{m,j}$ is the unique element of $\Psi_{2,m}$ whose restriction to $%
\mathfrak{a}_{n}$ is $\alpha_{n,j}$. Define $\xi_{n,j}\in i\mathfrak{a}_n^*$
by
\begin{equation}  \label{def-xiNj}
\frac{\langle \xi_{n,j},\alpha_{n,i}\rangle}{\langle
\alpha_{n,i},\alpha_{n,i}\rangle} =\delta_{i,j}
\end{equation}

The weights $\xi_{n,j}$ are the \textit{class-1 fundamental weights} for $(
\mathfrak{u}_{n} , \mathfrak{k}_{n} )$. We set
\begin{equation*}
\Xi_{n} = \left\{\xi_{1,r_{1}},\ldots,\xi_{n,r_{n}}\right\}.
\end{equation*}
It is clear by the definition of $\Lambda^+_n$ that
\begin{equation}  \label{eq-Lambdan}
\Lambda^+_n=\sum_{j=1}^{r_n}\mathbb{Z}^+ \xi_{n,j}\, .
\end{equation}

\begin{lemma}[\protect\cite{Wolf2007}, Lemma 6, \protect\cite{OlafssonWolf},
Lemma 6.7]
\label{Wolflemma} Recall the root ordering of $(5.2)$. If $1\leq j\leq r_{n}$
then $\xi_{m,j}$ is the unique element of $\Xi_{m}$ whose restriction of $%
\mathfrak{a}_{n}$ is $\xi_{n,j}$.
\end{lemma}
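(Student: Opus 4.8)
The plan is to prove the statement by descending induction on $n$, comparing the defining conditions $\frac{\langle \xi_{n,j},\alpha_{n,i}\rangle}{\langle \alpha_{n,i},\alpha_{n,i}\rangle}=\delta_{i,j}$ for $(\mathfrak{u}_n,\mathfrak{k}_n)$ against those for $(\mathfrak{u}_m,\mathfrak{k}_m)$, and using the already-recorded fact (from the root ordering of $(5.2)$ and the discussion preceding the lemma) that for $j\le r_n$ the root $\alpha_{m,j}\in\Psi_{2,m}$ is the unique element of $\Psi_{2,m}$ restricting to $\alpha_{n,j}$ on $\mathfrak{a}_n$. By transitivity it suffices to treat the case $m=n+1$, since the general assertion then follows by composing restrictions $\mathfrak{a}_m^*\to\cdots\to\mathfrak{a}_{n+1}^*\to\mathfrak{a}_n^*$.

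First I would fix $j$ with $1\le j\le r_n$ and consider $\xi_{m,j}\in\Xi_m\subset i\mathfrak{a}_m^*$. I want to show its restriction $\xi_{m,j}|_{\mathfrak{a}_n}$ equals $\xi_{n,j}$. Since $\Lambda^+_n=\sum_{i=1}^{r_n}\mathbb{Z}^+\xi_{n,i}$ by $(5.5)$ and the $\xi_{n,i}$ are a basis of $i\mathfrak{a}_n^*$ dual (in the normalized sense) to the $\alpha_{n,i}$, it is enough to compute the pairings $\frac{\langle \xi_{m,j}|_{\mathfrak{a}_n},\alpha_{n,i}\rangle}{\langle\alpha_{n,i},\alpha_{n,i}\rangle}$ for $i=1,\dots,r_n$ and check they equal $\delta_{i,j}$. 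Here I use that the inner product on $i\mathfrak{a}_n^*$ is the restriction of the one on $i\mathfrak{a}_m^*$ (this is built into the recursive choice $\mathfrak{a}_n=\mathfrak{a}_m\cap\mathfrak{u}_n$ with the ambient trace form), so that $\langle \xi_{m,j}|_{\mathfrak{a}_n},\alpha_{n,i}\rangle=\langle \xi_{m,j},\tilde\alpha_{n,i}\rangle$ where $\tilde\alpha_{n,i}\in i\mathfrak{a}_n^*\subset i\mathfrak{a}_m^*$ is the corresponding vector; and since $\alpha_{m,i}$ restricts to $\alpha_{n,i}$ with $\langle\alpha_{n,i},\alpha_{n,i}\rangle=\langle\alpha_{m,i},\alpha_{m,i}\rangle$ (same length, by the propagation hypothesis that we only adjoin nodes at the left end, keeping the sub-diagram and its root lengths intact), the pairing reduces to $\frac{\langle\xi_{m,j},\alpha_{m,i}\rangle}{\langle\alpha_{m,i},\alpha_{m,i}\rangle}=\delta_{i,j}$ by $(5.4)$ applied in $\mathfrak{u}_m$. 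Thus $\xi_{m,j}|_{\mathfrak{a}_n}$ satisfies exactly the defining equations of $\xi_{n,j}$, and since those equations have a unique solution in $i\mathfrak{a}_n^*$, we get $\xi_{m,j}|_{\mathfrak{a}_n}=\xi_{n,j}$.

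For uniqueness within $\Xi_m$, suppose $\xi_{m,k}|_{\mathfrak{a}_n}=\xi_{n,j}$ for some $1\le k\le r_m$. Pairing both sides with $\alpha_{n,j}$ (normalized) gives $1$ on the right; on the left we get $\frac{\langle\xi_{m,k},\alpha_{m,j}\rangle}{\langle\alpha_{m,j},\alpha_{m,j}\rangle}=\delta_{k,j}$ using again that $\alpha_{m,j}$ restricts to $\alpha_{n,j}$ and the lengths match. Hence $\delta_{k,j}=1$, i.e. $k=j$, which is the asserted uniqueness. (Alternatively one invokes Lemma \ref{Wolflemma} itself for the one-step case as already stated in \cite{Wolf2007, OlafssonWolf}, but the argument above makes the reduction self-contained.)

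The main obstacle is purely bookkeeping: one must be careful that (i) the inner products used in $(5.4)$ at levels $n$ and $m$ are genuinely compatible under restriction — this is exactly where the recursive choice of the $\mathfrak{a}_n$ and the fixed ambient trace form on $\mathfrak{u}=\mathfrak{u}(n)$ enter — and (ii) the simple root $\alpha_{n,j}$ of $\Sigma_{2,n}^+$ really is the restriction of $\alpha_{m,j}\in\Psi_{2,m}$ with the same squared length, which is precisely the content of the propagation hypothesis (only nodes at the left end of the Dynkin diagram of $\Psi_{1/2}$, hence of $\Psi_2$, are adjoined, so the right-hand sub-diagram and all its root lengths are unchanged). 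Once these two compatibilities are in place, the computation of the normalized pairings and the appeal to uniqueness of the solution of the linear system $(5.4)$ finish the proof with no further difficulty.
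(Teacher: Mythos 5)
The paper does not prove this lemma internally---it is quoted from \cite{Wolf2007} and \cite{OlafssonWolf}, where the verification rests on the explicit, case-by-case description of the restricted root systems and of the embeddings $\mathfrak{a}_n\subset\mathfrak{a}_m$ for the families in the table. Your overall strategy (check the defining pairings $\langle\xi_{m,j}|_{\mathfrak{a}_n},\alpha_{n,i}\rangle/\langle\alpha_{n,i},\alpha_{n,i}\rangle=\delta_{i,j}$ and use uniqueness of the solution of that linear system) is the right one, and the first reduction $\langle\xi_{m,j}|_{\mathfrak{a}_n},\alpha_{n,i}\rangle_n=\langle\xi_{m,j},\tilde\alpha_{n,i}\rangle_m$, with $\tilde\alpha_{n,i}$ the extension of $\alpha_{n,i}$ by zero on $\mathfrak{a}_m\cap\mathfrak{a}_n^{\perp}$, is correct (restriction of functionals is orthogonal projection of the representing vectors, given compatible trace forms).

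The gap is the next step, where you replace $\tilde\alpha_{n,i}$ by $\alpha_{m,i}$. This is legitimate if and only if $\alpha_{m,i}$ vanishes on $\mathfrak{a}_m\cap\mathfrak{a}_n^{\perp}$, equivalently $\langle\alpha_{m,i},\alpha_{m,i}\rangle=\langle\alpha_{n,i},\alpha_{n,i}\rangle$; indeed, since restriction is a projection one always has $\langle\alpha_{n,i},\alpha_{n,i}\rangle\le\langle\alpha_{m,i},\alpha_{m,i}\rangle$ with equality exactly when $\alpha_{m,i}=\tilde\alpha_{n,i}$. So your parenthetical ``same length, by the propagation hypothesis'' is not a justification but a restatement of precisely what must be proved, and it does not follow from the definition of propagation used in the paper: propagation only says the abstract Dynkin diagram of $\Psi_{1/2,n}$ sits at one end of that of $\Psi_{1/2,m}$, and the recorded fact before the lemma only says $\alpha_{m,i}|_{\mathfrak{a}_n}=\alpha_{n,i}$. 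Restriction to a root does not by itself force equal length or support in $i\mathfrak{a}_n^*$ (for instance, in a $B$-type system the root $e_i+e_k$ with $e_k|_{\mathfrak{a}_n}=0$ restricts to the root $e_i$ but is longer and not supported on $\mathfrak{a}_n$). The missing ingredient---that for $i\le r_n$ the simple roots $\alpha_{m,i}$ involve only the ``kept'' coordinates, i.e.\ lie in $i\mathfrak{a}_n^*$ zero-extended---is true in the propagation setting, but it is exactly what the cited references establish by inspecting the explicit root data (with $\mathfrak{a}_n$ spanned by a subfamily of the coordinate axes of $\mathfrak{a}_m$). To make your argument complete you must either carry out that case-by-case verification, or cite it for this specific support/length statement; once it is in hand, both your existence and uniqueness computations go through as written.
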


This allows us to construct the map $\iota_{n,m} :\Lambda^+_n\to
\Lambda^+_m$
\begin{equation}
\iota_{n,m}\left(\sum_{j=1}^{r_n} k_j\xi_{n,j}\right):=\sum_{j=1}^{r_n}k_j\xi_{m,j}\, .
\end{equation}
Note that $\iota_{n,m}(\mu )|_{\mathfrak{a}_n}=\mu$ and if $\delta\in
\Lambda^+_m$ is such that $\delta = \sum_{j=1}^{r_n} k_j\xi_{m,j}$, then $%
\delta|_{\mathfrak{a}_n}\in \Lambda^+_n$ and $\iota_{n,m}(\delta|_{\mathfrak{%
a}_n})=\delta$.

\begin{lemma}[\protect\cite{OlafssonWolf}, Lemma 6.8]
\label{resmult} Assume that $\delta \in \Lambda^+_m$ is a combination of the
first $r_n$ fundamental weights, $\delta = \sum_{j=1}^{r_n} k_j \xi_{m,j}$.
Let $\mu:=\delta|_{\mathfrak{a}_n}=\sum_{j=1}^{r_n}k_j \xi_{n,j}\, $. If $%
v_\delta$ is a nonzero highest weight vector in $V_{\delta}$ then $%
\langle\pi_{\delta} (U_n)v_\delta\rangle$, the linear span of $\{\pi_\mu
(g)v_\delta\mid g\in U_n\}$, is an irreducible representation of $U_n$ which
is isomorphic to $(\pi_{\delta},V_{\mu})$. Furthermore, $v$ is a highest
weight vector for $\pi_\mu$ and $\pi_{\mu}$ occurs with multiplicity one in $%
\pi_{\delta}|_{G_n}$.
\end{lemma}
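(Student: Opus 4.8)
The plan is to extract the $U_n$-cyclic subspace generated by a highest weight vector and identify it with $V_\mu$ by matching highest weights. Let $\mathfrak{b}_m\supseteq\mathfrak{a}_m$ be a maximally split Cartan subalgebra adapted to the positive system so that the highest weight of $\pi_\delta$ with respect to $U_m$ is $\delta$, and let $v_\delta\ne 0$ be a highest weight vector. First I would observe that $v_\delta$ is a weight vector for $\mathfrak{a}_n\subseteq\mathfrak{a}_m$ of weight $\delta|_{\mathfrak{a}_n}=\mu$, and that it is annihilated by every positive restricted root space $\mathfrak{u}_{n,\mathbb{C}\alpha}$ with $\alpha\in\Sigma_n^+$; this is where the compatibility of positive systems $\Sigma_n^+\subseteq\Sigma_m^+|_{\mathfrak{a}_n}$ enters, since it guarantees that the $U_n$-raising operators are among the $U_m$-raising operators that kill $v_\delta$. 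Hence $v_\delta$ is a highest weight vector for the $U_n$-action in the appropriate sense.

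Next I would set $W:=\langle\pi_\delta(U_n)v_\delta\rangle$, the $U_n$-invariant subspace generated by $v_\delta$. Since $U_n$ is compact, $W$ decomposes into irreducibles; but any irreducible summand contains a $U_n$-highest weight vector, and by complete reducibility together with the fact that $v_\delta$ generates $W$ and is itself a highest weight vector, $W$ must be irreducible with highest weight $\mu$. Therefore $W\simeq V_\mu$ as a $U_n$-representation. The statement that $v_\delta$ is a highest weight vector for $\pi_\mu$ is then immediate from the construction.

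Finally, for the multiplicity-one assertion I would invoke the general branching theory for spherical representations developed in \cite{OlafssonWolf} (Section 6) together with Lemma \ref{Wolflemma}: since the class-1 fundamental weights restrict as $\xi_{m,j}|_{\mathfrak{a}_n}=\xi_{n,j}$ and $\delta=\sum_{j=1}^{r_n}k_j\xi_{m,j}$ is supported on the first $r_n$ fundamental weights, the restriction $\pi_\delta|_{U_n}$ contains $\pi_\mu$, and the "first occurrence" or "leading term" argument shows that among all $U_n$-types appearing in $\pi_\delta|_{U_n}$, the type $\pi_\mu$ is extremal (it is the unique one whose highest weight restricts from $\delta$ with no drop), forcing multiplicity one. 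Concretely, any other copy of $V_\mu$ inside $V_\delta$ would supply a second $U_n$-highest weight vector of $\mathfrak{a}_n$-weight $\mu$; but such a vector, combined with the fact that $\delta$ is the $U_m$-highest weight and $\mu=\delta|_{\mathfrak{a}_n}$ is attained only on the $U_m$-highest weight line modulo lower terms, leads to a contradiction via weight-space dimension counting in $V_\delta$.

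The main obstacle I anticipate is the multiplicity-one statement: the first two parts are essentially formal consequences of compactness and compatibility of positive systems, but pinning down that $\pi_\mu$ occurs \emph{exactly once} requires either a careful analysis of the $\mathfrak{a}_n$-weight multiplicities in $V_\delta$ near the extremal weight, or a direct appeal to the structural results of \cite{OlafssonWolf}. I would lean on the latter, citing Lemma 6.8 of \cite{OlafssonWolf} and the surrounding discussion, rather than reproving the weight combinatorics from scratch.
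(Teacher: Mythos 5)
The paper itself does not prove this lemma: it is stated with the attribution \cite{OlafssonWolf}, Lemma 6.8, and no proof is given. So your final decision to fall back on that reference for the hardest part is exactly what the authors do, and to that extent your treatment is consistent with the paper's.

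If, however, your sketch is meant to stand on its own, two steps need repair. First, to run the cyclic highest-weight-module argument for $U_n$ you need $v_\delta$ to be a weight vector for a full Cartan subalgebra $\mathfrak{h}_n=\mathfrak{t}_n\oplus\mathfrak{a}_{n,\mathbb{C}}$ of $\mathfrak{u}_{n,\mathbb{C}}$ annihilated by the nilradical of a Borel, not merely annihilated by the restricted root spaces $\mathfrak{u}_{n,\mathbb{C}\alpha}$, $\alpha\in\Sigma_n^+$. The missing input is the Cartan--Helgason theorem: since $\delta$ is spherical it vanishes on $\mathfrak{t}_m\cap\mathfrak{m}_m$, so the reductive algebra $\mathfrak{m}_{m,\mathbb{C}}=\mathfrak{z}_{\mathfrak{k}_{m,\mathbb{C}}}(\mathfrak{a}_{m,\mathbb{C}})$ annihilates $v_\delta$; and one must still control $\mathfrak{m}_{n,\mathbb{C}}$, which sits inside $\mathfrak{z}_{\mathfrak{u}_{m,\mathbb{C}}}(\mathfrak{a}_{n,\mathbb{C}})$ and can have components along root spaces $\mathfrak{u}_{m,\mathbb{C}\beta}$ with $\beta|_{\mathfrak{a}_n}=0$ and $\beta$ negative, which do not obviously kill $v_\delta$. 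Relatedly, the condition $\Sigma_n^+\subseteq\Sigma_m^+|_{\mathfrak{a}_n}$ as literally stated is weaker than what your first step uses; you need the compatible-order statement that every $\beta\in\Sigma_m$ with $\beta|_{\mathfrak{a}_n}\in\Sigma_n^+$ is itself positive. Second, the multiplicity-one argument by ``weight-space dimension counting'' does not work as described: the $\mathfrak{a}_n$-weight space of weight $\mu$ in $V_\delta$ is in general of dimension greater than one (every weight $\delta-\beta$ with $\beta$ a positive root restricting to $0$ on $\mathfrak{a}_n$ also restricts to $\mu$), and the multiplicity of $\pi_\mu$ in $\pi_\delta|_{U_n}$ is the number of independent $U_n$-highest weight vectors of weight $\mu$, which cannot be read off from restricted-weight multiplicities alone. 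So multiplicity one genuinely requires the structural results of \cite{OlafssonWolf}, which is precisely what both you and the paper end up citing.
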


The point of this discussion is, that if $\mu\in\Lambda^+_n$, then we
can--and will--view $V_\mu$ as a subspace of $V_{\iota_{n,m}(\mu )}$ such
that $\langle u,v\rangle_\mu=\langle u,v\rangle_{\iota_{n,m}(\mu )}$ for all
$u,v\in V_\mu$.

We note that the $K_n$-invariant vector $e_\mu\in V_\mu$ is not necessarily $%
K_m$-invariant. But the projection of $e_{\iota_{n,m}(\mu )}$ onto $V_\mu$
is always non-zero and $K_n$ invariant. This follows from the Lemma \ref%
{resmult} and the fact that $\langle v_\delta ,e_\delta\rangle_\delta\not=0$
(see \cite{He84}, the proof of Theorem 4.1, Chapter V). In particular $%
e_{\iota_{n,m}(\mu )}=ce_\mu +f_{n,m;\mu}$ for some $K_n$-fixed vector $%
f_{n,m;\mu}$, orthogonal to $e_\mu$.

\section{The Segal-Bargman Transform on the Direct Limit of $\{L^2(M_n)\}_n$}\label{s6}

\noindent In this section, we recall the isometric $U_n$-embedding on $%
L^2(M_n)$ into $L^2(M_m)$ due to J. Wolf, \cite{JWCompact}. We then discuss
similar construction for the Fock spaces and show that the Segal-Bargmann
transform extends to the Hilbert-space direct limit.

First, define $\gamma_{m,n} : L^2(M_n)\longrightarrow L^2(M_m)$ by
\begin{eqnarray*}
\gamma_{n,m}(f) &:=& \sum_{\mu\in\Lambda^+_n} d((\iota_{n,m}(\mu ))\sqrt{%
\frac{d(\mu )}{d(\iota_{n,m}(\mu ))}}\, \langle \widehat{f}_\mu
,\pi_{\iota_{n,m}(\mu )}(\, \cdot \, )e_{\iota_{n,m}(\mu
)}\rangle_{\iota_{n,m}(\mu )} \\
&=&\sum_{\mu\in\Lambda^+_n} \sqrt{d((\iota_{n,m}(\mu ))d(\mu )}\, \langle
\widehat{f}_\mu,\pi_{\iota_{n,m}(\mu )}(\, \cdot \, )e_{\iota_{n,m}(\mu
)}\rangle_{\iota_{n,m}(\mu )} \, .
\end{eqnarray*}

Clearly each $\gamma_{m,n}$ is linear. Then by Theorem \ref{th-Plancherel1},
in particular (\ref{eq-FourierSeries}) it follows that $\gamma_{n,m}$ is an
isometry. (\ref{eq-intertwining}) implies that $\gamma_{n,m}$ is an
intertwining operator. Moreover, if $n \leq m \leq p$, then
\begin{equation*}
\gamma_{n,p} = \gamma_{m,p} \circ \gamma_{n,m}.
\end{equation*}
Therefore we have a direct system of Hilbert spaces $\{L^2(M_n),
\gamma_{m,n}\}$ so the Hilbert space direct limit
\begin{equation*}
L^2(M_\infty) := \varinjlim \{L^2(M_n), \gamma_{n,m}\}
\end{equation*}
is well defined. Denote by $\gamma_n$ the canonical isometric embedding
$L^2(M_n)\hookrightarrow L^2(M_\infty)$. As $\gamma_{n,m}$ intertwines $L_n$
and $L_m$ it follows that we have a well defined unitary representation of
$U_\infty:=\varinjlim U_n$ on $L^2(M_\infty )$ given by: If $x\in U_n$ and $f
=\gamma_n(f_n)\in L^2(M_\infty)$, then $L_\infty (x)f=\gamma_n(L_n(x)f_n)$.
Then $\gamma_n$ is a unitary $U_n$-map and according to \cite{JWCompact}, $L^2(M_\infty)$
is a multiplicity free representation of $U_\infty$. We skip the details as
they will not be needed here.

For simplicity write
\begin{equation*}
e_n(t, \mu) := e^{t\langle \mu+2\rho_n, \mu\rangle }\, ,\quad \mu\in
\Lambda^+_n\, .
\end{equation*}
Next, we define a isometric embedding $\delta_{n,m} : \mathcal{H}_t(M_{n%
\mathbb{C}}) \hookrightarrow \mathcal{H}_t(M_{m\mathbb{C}})$ for the Fock
spaces using Theorem \ref{th-SeqSp} and such that the diagram
\begin{equation}  \label{eq-diagram}
\begin{diagram} \node{L^2(M_n)}\arrow{e,t}{\gamma_{n,m}}
\arrow{s,l}{H_{t,n}} \node{L^2(M_m)} \arrow{s,r}{H_{t,m}}\\
\node{\mathcal{H}_t(M_{n\C})}\arrow{e,t}{\delta_{n,m}}
\node{\mathcal{H}_t(M_{m\C})} \end{diagram}
\end{equation}
commutes. This forces us to define $\delta_{n,m}$ by
\begin{equation}
\delta_{n,m}\left(\sum_{\mu\in\Lambda^+_n}d(\mu )\widetilde{\pi}_{\mu }^{a(\mu)}\right)
:=\sum_{\mu\in\Lambda^+_n}d (\iota_{n,m}(\mu )) \sqrt{\frac{d(\mu )}{%
d(\iota_{n,m}(\mu ))}} \frac{e_n(t,\mu )}{e_m(t,\iota_{n,m}(\mu ))}
\widetilde\pi_{\iota_{n,m}(\mu )}^{a(\mu )}\, .
\end{equation}
Here we use the notation from Theorem \ref{th-SeqSp} and view $%
V_\mu\subseteq V_{\iota_{n,m}(\mu )}$ so $a (\mu )\in V_{\iota_{n,m}(\mu )}$.

\begin{lemma}
If $m>n$ then $\delta_{n,m}:\mathcal{H}_t(M_{n\mathbb{C}})\to \mathcal{H}%
_t(M_{m\mathbb{C}})$ is an isometric $U_n$-map and the diagram (\ref%
{eq-diagram}) commutes. Furthermore, if $n\le m\le p$ then $%
\delta_{n,p}=\delta_{m,p}\circ \delta_{n,p}$.
\end{lemma}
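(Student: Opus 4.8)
The plan is to verify all three assertions in the sequence-space model of Theorem \ref{th-SeqSp}(2). Under the unitary $U_n$-isomorphism $F\mapsto a$ identifying $\mathcal{H}_t(M_{n\C})$ with $\mathcal{F}_t(\Lambda^+_n)$ and carrying the representation $\tau_{t,n}$ to $\sigma_{t,n}$ (and likewise for $m$), the map $\delta_{n,m}$ becomes the map $\widehat{\delta}_{n,m}:\mathcal{F}_t(\Lambda^+_n)\to\mathcal{F}_t(\Lambda^+_m)$ sending $a$ to the sequence supported on $\iota_{n,m}(\Lambda^+_n)\subseteq\Lambda^+_m$ with
\[
\widehat{\delta}_{n,m}(a)\bigl(\iota_{n,m}(\mu)\bigr)=\sqrt{\tfrac{d(\mu)}{d(\iota_{n,m}(\mu))}}\,\frac{e_n(t,\mu)}{e_m(t,\iota_{n,m}(\mu))}\,a(\mu)\, .
\]
Here, as explained after Lemma \ref{resmult}, $V_\mu$ is an isometrically embedded $U_n$-submodule of $V_{\iota_{n,m}(\mu)}$, so $a(\mu)$ is an unambiguous vector of $V_{\iota_{n,m}(\mu)}$ and all norms below make sense.

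The isometry statement, and with it the fact that $\delta_{n,m}$ actually maps into $\mathcal{H}_t(M_{m\C})$, is one short computation: since $\iota_{n,m}$ is injective, the squared $\mathcal{F}_t(\Lambda^+_m)$-norm of $\widehat{\delta}_{n,m}(a)$ is the sum over $\mu\in\Lambda^+_n$ of the terms $d(\iota_{n,m}(\mu))\,e_m(t,\iota_{n,m}(\mu))^2\,\|\widehat{\delta}_{n,m}(a)(\iota_{n,m}(\mu))\|^2$, and in each such term the two dimension factors $d(\mu),d(\iota_{n,m}(\mu))$ and the two heat factors $e_n(t,\mu)^2,e_m(t,\iota_{n,m}(\mu))^2$ cancel pairwise, leaving $d(\mu)\,e_n(t,\mu)^2\,\|a(\mu)\|_\mu^2$; the sum is then $\|a\|_{\mathcal{F},n}^2$. (Equivalently one runs the same cancellation directly on the functions $\widetilde{\pi}_{\iota_{n,m}(\mu)}^{a(\mu)}$, whose $\mathcal{H}_t(M_{m\C})$-norm is given by Lemma \ref{focknorm}, convergence being covered by Theorem \ref{th-SeqSp}(2) or Theorem \ref{th-holext}.) For $U_n$-equivariance, observe that for $x\in U_n$ the operator $\sigma_{t,m}(x)$ multiplies the $\iota_{n,m}(\mu)$-component by $\pi_{\iota_{n,m}(\mu)}(x)$, and by Lemma \ref{resmult} this operator restricted to the invariant subspace $V_\mu$ equals $\pi_\mu(x)$; since $\widehat{\delta}_{n,m}(a)$ has its $\iota_{n,m}(\mu)$-component inside $V_\mu$, it follows at once that $\widehat{\delta}_{n,m}\circ\sigma_{t,n}(x)=\sigma_{t,m}(x)\circ\widehat{\delta}_{n,m}$, i.e. $\delta_{n,m}$ is a $U_n$-map.

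For the commutativity of the diagram (\ref{eq-diagram}) I would use the $M_n$- and $M_m$-versions of Corollary \ref{co-HeatSeq}, which factor each $H_{t,\cdot}$ as the Fourier transform followed by $F_{t,\cdot}$ (with $F_{t,n}(v)(\mu)=e_n(t,\mu)^{-1}v(\mu)$) followed by the inverse of $F\mapsto a$. Reading off from (\ref{eq-FourierSeries}) that the Fourier transform of $\pi_\delta^u$ is the sequence equal to $u/d(\delta)$ at $\delta$ and zero elsewhere, the definition of $\gamma_{n,m}$ together with (\ref{eq-piu}) shows that $\gamma_{n,m}$ corresponds under the Fourier transforms of Theorem \ref{th-Plancherel1} to the map $\widehat{\gamma}_{n,m}$ sending $v$ to the sequence supported on $\iota_{n,m}(\Lambda^+_n)$ with $\widehat{\gamma}_{n,m}(v)(\iota_{n,m}(\mu))=\sqrt{d(\mu)/d(\iota_{n,m}(\mu))}\,v(\mu)$. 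Commutativity of (\ref{eq-diagram}) thus reduces to the identity $F_{t,m}\circ\widehat{\gamma}_{n,m}=\widehat{\delta}_{n,m}\circ F_{t,n}$, and evaluating both sides at $\iota_{n,m}(\mu)$ gives $e_m(t,\iota_{n,m}(\mu))^{-1}\sqrt{d(\mu)/d(\iota_{n,m}(\mu))}\,v(\mu)$ on either side.

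Finally, the relation $\delta_{n,p}=\delta_{m,p}\circ\delta_{n,m}$ for $n\le m\le p$ follows from $\iota_{n,p}=\iota_{m,p}\circ\iota_{n,m}$ (Lemma \ref{Wolflemma}), the transitivity of the isometric $U_n$-inclusions $V_\mu\subseteq V_{\iota_{n,m}(\mu)}\subseteq V_{\iota_{n,p}(\mu)}$, and a telescoping of the scalars: rewriting $\delta_{n,m}(F)$ in the normal form $\sum_\mu d(\iota_{n,m}(\mu))\,\widetilde{\pi}_{\iota_{n,m}(\mu)}^{b(\mu)}$ with $b(\mu)=\sqrt{d(\mu)/d(\iota_{n,m}(\mu))}\,\bigl(e_n(t,\mu)/e_m(t,\iota_{n,m}(\mu))\bigr)a(\mu)$ and then applying $\delta_{m,p}$, the factors $d(\iota_{n,m}(\mu))$ and $e_m(t,\iota_{n,m}(\mu))$ cancel, leaving exactly the coefficient occurring in $\delta_{n,p}(F)$. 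I do not expect a genuine obstacle; the content is bookkeeping. The one point deserving care — and the only place the propagation hypothesis enters — is the consistent handling of the inclusions $V_\mu\subseteq V_{\iota_{n,m}(\mu)}$ from Lemma \ref{resmult}: one needs them $U_n$-equivariant (for equivariance), norm-preserving (for isometry), and compatible under composition (for the last assertion). Everything else is cancellation of the factors $d(\mu)$, $d(\iota_{n,m}(\mu))$, $e_n(t,\mu)$, $e_m(t,\iota_{n,m}(\mu))$.
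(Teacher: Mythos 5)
Your proposal is correct, and its core — the isometry computation — is exactly the paper's argument: the same pairwise cancellation of $d(\mu)$, $d(\iota_{n,m}(\mu))$, $e_n$, $e_m$ in the sequence model of Theorem \ref{th-SeqSp}, after which the paper simply invokes Theorem \ref{th-SeqSp} again. Where you go beyond the paper is in actually verifying the remaining claims: the paper's proof never spells out the commutativity of (\ref{eq-diagram}) (the definition of $\delta_{n,m}$ is declared to be ``forced'' by it), the $U_n$-equivariance, or the relation $\delta_{n,p}=\delta_{m,p}\circ\delta_{n,m}$, whereas you check commutativity by factoring both heat transforms through Corollary \ref{co-HeatSeq} and computing $\widehat{\gamma}_{n,m}$ on Fourier coefficients, check equivariance from the $U_n$-invariance of $V_\mu\subseteq V_{\iota_{n,m}(\mu)}$ via Lemma \ref{resmult}, and check transitivity from $\iota_{n,p}=\iota_{m,p}\circ\iota_{n,m}$ and a telescoping of the scalar factors. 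All of these computations are right (note the statement's ``$\delta_{n,p}=\delta_{m,p}\circ\delta_{n,p}$'' is a typo that you implicitly correct), and the one subtlety you flag — compatibility of the embeddings $V_\mu\subseteq V_{\iota_{n,m}(\mu)}\subseteq V_{\iota_{n,p}(\mu)}$ with the direct embedding $V_\mu\subseteq V_{\iota_{n,p}(\mu)}$ — is settled by the multiplicity-one statement in Lemma \ref{resmult} (the copy of $V_\mu$ inside $V_{\iota_{n,p}(\mu)}$ is unique, and the highest-weight-vector construction makes the inclusions literally compatible), so there is no gap.
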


\begin{proof}
Write $\nu = \iota_{n,m}(\mu )$. We have
\begin{eqnarray*}
\|\delta_{n,m}(\sum_{\mu\in\Lambda^+_n}d(\mu )\widetilde{\pi}_{\mu
}^{a(\mu)})\|_{m,t}^2&=& \sum_{\mu\in\Lambda^+_n}d (\nu ) e_m(2t,\nu ))
\frac{d(\mu )}{d(\nu)} \frac{e_n(2t,\mu )}{e_m(2t,\nu )} \|a(\mu )\|^2_{\nu}
\\
&=&\sum_{\mu\in\Lambda^+_n} d(\mu )e_n(2t,\mu ) \|a(\mu )\|^2_\mu \\
&=&\|\sum_{\mu\in\Lambda^+_n}d(\mu )\widetilde{\pi}_{\mu
}^{a(\mu)})\|_{n,t}^2\, .
\end{eqnarray*}
Theorem \ref{th-SeqSp} implies that $\delta_{n,m}:\mathcal{H}_t(M_{n%
\mathbb{C}})\to \mathcal{H}_t(M_{m\mathbb{C}})$ is an unitary $U$%
-isomorphism onto its image.
\end{proof}

The following is now clear from the universal mapping property of the direct limit
of Hilbert spaces, see \cite{Natarajan}:

\begin{theorem}
\label{th-result1} Let $L^2(M_\infty) := \varinjlim \{L^2(M_n),
\gamma_{n,m}\}$ as before and $\mathcal{H}_t(M_{\infty\mathbb{C}}) :=
\varinjlim \mathcal{H}_t(M_{n\mathbb{C}})$ in the category of Hilbert spaces
and isometric embeddings. Then there exists a unique unitary isomorphism $%
H_{t,\infty} : L^2(M_\infty ) \to \mathcal{H}_t(M_{\infty\mathbb{C}})$ such
that the diagram
\begin{equation*}
\begin{diagram}
\node{\dots}\arrow{e}
\arrow{s}
\node{L^2(M_n)}\arrow{e,t}{\gamma_{n+1,n}}
\arrow{s,l}{H_{t,n}}
\node{L^2(M_{n+1})}\arrow{e}
\arrow{s,r}{H_{t,n+1}}
\node{\dots}\arrow{e}
\arrow{s}
\node{L^2(M_\infty)}
\arrow{s,l}{H_{t,\infty}}\\
\node{\dots}\arrow{e}
\node{\mathcal{H}_t(M_{n\C})}
\arrow{e,t}{\delta_{n+1,n}}
\node{\mathcal{H}_t(M_{n+1,\C})}\arrow{e}
\node{\dots}\arrow{e}
\node{\mathcal{H}_t(M_{\infty\C})}
\end{diagram}
\end{equation*}
commutes. In particular, if $\delta_n : \mathcal{H}_t(M_{n\mathbb{C}})\to
\mathcal{H}_t(M_{m\mathbb{C}})$ and $\gamma_n : L^2(M_n)\to L^2(M_\infty)$
are the canonical embedding then $\delta_n\circ H_{t,n}=H_{t,\infty}\circ
\gamma_n$.
\end{theorem}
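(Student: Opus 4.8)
The plan is to deduce this from the universal mapping property of the direct limit of Hilbert spaces with isometric embeddings (see \cite{Natarajan}), applied to the family $\{H_{t,n}\}_n$, which we first observe is a morphism of directed systems. By Theorem \ref{eq-HtIso} each $H_{t,n}\colon L^2(M_n)\to\mathcal{H}_t(M_{n\mathbb{C}})$ is a unitary $U_n$-isomorphism, and by the preceding Lemma the square (\ref{eq-diagram}) commutes, i.e.\ $H_{t,m}\circ\gamma_{n,m}=\delta_{n,m}\circ H_{t,n}$ for all $m\ge n$. Hence the maps $\delta_n\circ H_{t,n}\colon L^2(M_n)\to\mathcal{H}_t(M_{\infty\mathbb{C}})$ satisfy $(\delta_m\circ H_{t,m})\circ\gamma_{n,m}=\delta_m\circ\delta_{n,m}\circ H_{t,n}=\delta_n\circ H_{t,n}$ and thus form a compatible cocone over $\{L^2(M_n),\gamma_{n,m}\}$. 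The universal property then yields a unique bounded linear map $H_{t,\infty}\colon L^2(M_\infty)\to\mathcal{H}_t(M_{\infty\mathbb{C}})$ with $H_{t,\infty}\circ\gamma_n=\delta_n\circ H_{t,n}$ for every $n$, which is exactly the asserted commutativity of the ladder diagram and the ``in particular'' statement.

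It remains to check that $H_{t,\infty}$ is unitary. For $f\in L^2(M_n)$ we have, using that $\gamma_n$, $\delta_n$ are isometric and $H_{t,n}$ is unitary,
\begin{equation*}
\|H_{t,\infty}(\gamma_n(f))\|=\|\delta_n(H_{t,n}(f))\|=\|H_{t,n}(f)\|=\|f\|=\|\gamma_n(f)\|\, .
\end{equation*}
Thus $H_{t,\infty}$ is isometric on the dense subspace $\bigcup_n\gamma_n(L^2(M_n))$ of $L^2(M_\infty)$, hence isometric on all of $L^2(M_\infty)$; in particular its image is closed. Since that image contains $\bigcup_n H_{t,\infty}(\gamma_n(L^2(M_n)))=\bigcup_n\delta_n(\mathcal{H}_t(M_{n\mathbb{C}}))$, which is dense in $\mathcal{H}_t(M_{\infty\mathbb{C}})$, the map $H_{t,\infty}$ is onto, hence a unitary isomorphism. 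Equivalently, the family $\{H_{t,n}^{-1}\}_n$ is also a morphism of directed systems and induces a map $\mathcal{H}_t(M_{\infty\mathbb{C}})\to L^2(M_\infty)$ which is a two-sided inverse of $H_{t,\infty}$ by the uniqueness clause of the universal property.

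Uniqueness of $H_{t,\infty}$ follows because any bounded map making the ladder commute must send $\gamma_n(f)$ to $\delta_n(H_{t,n}(f))$ for all $n$ and $f$, hence agree with $H_{t,\infty}$ on the dense set $\bigcup_n\gamma_n(L^2(M_n))$ and therefore everywhere. I do not expect any real obstacle here: the substantive content -- that the individual squares (\ref{eq-diagram}) commute and that each $H_{t,n}$ is unitary -- has already been established, and the passage to the limit is purely formal bookkeeping with the universal property. One could also record that $H_{t,\infty}$ intertwines the limit representations of $U_\infty:=\varinjlim U_n$, since each $H_{t,n}$ is $U_n$-equivariant, but this is not needed for the statement.
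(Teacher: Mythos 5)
Your proposal is correct and follows the same route as the paper: the paper simply deduces the theorem from the universal mapping property of the Hilbert-space direct limit (citing Natarajan et al.), given that each square (\ref{eq-diagram}) commutes and each $H_{t,n}$ is unitary, which is precisely the argument you spell out in more detail.
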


\section{The Segal-Bargman Transform on the Direct Limit of $%
\{L^2(M_n)^{K_n}\}_n$}

\noindent We continue using the notations as in the previous section. We
pointed out earlier that the $U_n$-embedding $V_\mu \hookrightarrow
V_{\iota_{n,m}(\mu )}$ does not map $V_\mu^{K_n}$ into $V_{\iota_{n,m}(\mu
)}^{K_m}$. This implies that
\begin{equation*}
\gamma_{n,m}(L^2(M_n)^{K_n}) \not \subset L^2(M_m)^{K_m}
\end{equation*}
and $\gamma_{n,m}(\psi_\mu )\not=\psi_{\iota_{n,m}(\mu )}$. Therefore, to
describe the limit of the heat transform on the level of $K$-invariant
functions, a new embedding is needed. We therefore define $%
\eta_{n,m}:L^2(M_n)^{K_n}\to L^2(M_m)^{K_m}$ by
\begin{equation*}
\eta_{m,n}(f) := \sum_{\mu \in\Lambda^+} d(\iota_{n,m}(\mu)) \sqrt{\frac{%
d(\mu)}{d(\iota_{n,m}(\mu))}} \widehat{f}(\mu ) \psi_{\iota_{n,m}(\mu)}\, .
\end{equation*}
All $\eta_{m,n}$ are linear and it follows from Theorem \ref%
{SphericalPlancherel} that $\eta_{m,n}$ is an isometric embedding.
Furthermore, if $n \leq m \leq p$, then $\eta_{n,p} = \eta_{m,p} \circ
\eta_{n,m}$. Therefore $\{L^2(M_n)^{K_n}, \eta_{n,m}\}$ is a direct system
of Hilbert spaces. Define (by abuse of notation)
\begin{equation*}
L^2 (M_\infty)^{K_\infty} := \varinjlim L^2(M_n)^{K_n}
\end{equation*}
in the category of Hilbert spaces and isometric embeddings from the above
direct system. Denote by $\eta_n :L^2(M_n)\to L^2 (M_\infty)^{K_\infty}$ the
resulting canonical embedding.

For $m \geq n$ define
\begin{equation*}
\phi_{n,m}:\mathcal{H}_t(M_{n\mathbb{C}})^{K_{n\mathbb{C}}}\longrightarrow
\mathcal{H}_t(M_{m\mathbb{C}})^{K_{m\mathbb{C}}}
\end{equation*}
by
\begin{equation*}
\sum_{\mu\in\Lambda^+} d(\mu )a (\mu)\widetilde{\psi}_{\mu } \longmapsto
\sum_{\mu\in\Lambda^+} d(\iota_{n,m}(\mu))\sqrt{\frac{d(\mu)}{%
d(\iota_{n,m}(\mu ) )}} \frac{e_n(t, \mu )}{e_m(t, \iota_{n,m}(\mu))}\,
a(\mu ) \widetilde{\psi}_{\iota_{n,m}(\mu )}\, .
\end{equation*}
By Theorem \ref{th-SeqSp} it follows that $\phi_{n,m}$ is an isometric $U_n$%
-embedding: If
\[F := \sum_{\mu\in\Lambda^+} d(\mu)a(\mu )\widetilde{\psi}_{\mu} \in \mathcal{H}_t(M_{n\mathbb{C}})^{K_{n\mathbb{C}}}\]
then:
\begin{eqnarray*}
\|\phi_{n,m}(F)\|^2_{m,t} &=&\sum_{\mu\in\Lambda^+} d(\iota_{n,m}(\mu ))\,
e_m(2t, \iota_{n,m}(\mu ))\, \left|\sqrt{\frac{d(\mu)}{d(\iota_{n,m}(\mu ))}}%
\, a(\mu ) \frac{e_n(t, \mu )}{e_m(t, \iota_{n,m}(\mu))}\, \right|^2 \\
&=& \sum_{\mu\in\Lambda^+} d(\mu ) |a_t(\mu)|^2 e_n(2t, \mu ) \\
&=& \|F\|^2_{n,t}\, .
\end{eqnarray*}
Finally, it is easy to verify that if $n \leq m \leq p$, then $\phi_{n,p} =
\phi_{m,p} \circ \phi_{n,m}$.

Thus $\mathcal{H}_{t}(M_{\infty\mathbb{C}})^{K_{\infty\mathbb{C}}} := \varinjlim \{\mathcal{H}%
_t(M_{n\mathbb{C}})^{K_{n\mathbb{C}}}$ is well defined in the category of
Hilbert spaces and isometric embeddings.

\bigskip

\begin{lemma}
\label{lemmacommutative} For $m \geq n$, the following diagram is
commutative.
\begin{equation*}
\begin{diagram}
\node{L^2(M_n)^{K_n}}\arrow{e,t}{\eta_{n,m}}
\arrow{s,l}{H_{t,n}}
\node{L^2(M_m)^{K_m}}
\arrow{s,r}{H_{t,m}}\\
\node{\mathcal{H}_t(M_n^\C)^{K_n^\C}}\arrow{e,t}{\phi_{n,m}}
\node{\mathcal{H}_t(M_m^\C)^{K_m^\C}}
\end{diagram}
\end{equation*}
\end{lemma}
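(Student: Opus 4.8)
The plan is to verify commutativity by computing both composites $H_{t,m}\circ \eta_{n,m}$ and $\phi_{n,m}\circ H_{t,n}$ on an arbitrary $f\in L^2(M_n)^{K_n}$ and checking that they produce the same element of $\mathcal{H}_t(M_{m\mathbb{C}})^{K_m^{\mathbb{C}}}$. Since everything in sight is continuous and linear, it suffices to work on the dense span of the spherical functions, or equivalently to track the spherical Fourier coefficients $\widehat f(\mu)$, $\mu\in\Lambda^+_n$.

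First I would start from the bottom-left corner. By Theorem \ref{eq-HtIso} together with the description of $H_t$ on $K$-invariant functions (the $K$-invariant specialization in Theorem \ref{th-SeqSp}(3)--(4) and Lemma \ref{le-Heatonf}), we have
\begin{equation*}
H_{t,n}f=\sum_{\mu\in\Lambda^+_n} d(\mu)\,e^{-t\langle \mu+2\rho_n,\mu\rangle}\,\widehat f(\mu)\,\widetilde\psi_\mu
=\sum_{\mu\in\Lambda^+_n} d(\mu)\,\frac{\widehat f(\mu)}{e_n(t,\mu)}\,\widetilde\psi_\mu\,,
\end{equation*}
so that the coefficient attached to $\widetilde\psi_\mu$ in the notation of $\phi_{n,m}$ is $a(\mu)=\widehat f(\mu)/e_n(t,\mu)$. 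Applying $\phi_{n,m}$ to this, the definition of $\phi_{n,m}$ replaces the $\widetilde\psi_\mu$-term by a $\widetilde\psi_{\iota_{n,m}(\mu)}$-term with an extra factor $\sqrt{d(\mu)/d(\iota_{n,m}(\mu))}\cdot e_n(t,\mu)/e_m(t,\iota_{n,m}(\mu))$, giving
\begin{equation*}
\phi_{n,m}(H_{t,n}f)=\sum_{\mu\in\Lambda^+_n} d(\iota_{n,m}(\mu))\,\sqrt{\frac{d(\mu)}{d(\iota_{n,m}(\mu))}}\,\frac{\widehat f(\mu)}{e_m(t,\iota_{n,m}(\mu))}\,\widetilde\psi_{\iota_{n,m}(\mu)}\,.
\end{equation*}

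Next I would go around the other way. By definition of $\eta_{n,m}$, $\eta_{n,m}f=\sum_{\mu\in\Lambda^+_n} d(\iota_{n,m}(\mu))\sqrt{d(\mu)/d(\iota_{n,m}(\mu))}\,\widehat f(\mu)\,\psi_{\iota_{n,m}(\mu)}$, which is $K_m$-invariant with $m$-spherical Fourier coefficient (indexed by $\nu=\iota_{n,m}(\mu)\in\Lambda^+_m$) equal to $\sqrt{d(\mu)/d(\nu)}\,\widehat f(\mu)$ on the image of $\iota_{n,m}$ and $0$ otherwise — here one uses that $\iota_{n,m}$ is injective so there is no collision of indices, and that $\psi_\nu\perp\psi_{\nu'}$ for $\nu\neq\nu'$ by Theorem \ref{SphericalPlancherel}. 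Applying $H_{t,m}$ (again via the $K_m$-invariant form of Theorem \ref{eq-HtIso}) multiplies the $\widetilde\psi_\nu$-term by $e_m(t,\nu)^{-1}=e^{-t\langle\nu+2\rho_m,\nu\rangle}$, producing exactly the series displayed above for $\phi_{n,m}(H_{t,n}f)$. Since both sides agree coefficient-by-coefficient and both define elements of $\mathcal{H}_t(M_{m\mathbb{C}})^{K_m^{\mathbb{C}}}$, the diagram commutes; convergence of the series is guaranteed by the isometry statements already established for $\eta_{n,m}$, $\phi_{n,m}$, $H_{t,n}$ and $H_{t,m}$.

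The only genuinely delicate point — more bookkeeping than obstacle — is making sure the $V_\mu\subseteq V_{\iota_{n,m}(\mu)}$ identification interacts correctly with the spherical vectors: $e_\mu$ is not $e_{\iota_{n,m}(\mu)}$ (as recalled at the end of Section \ref{s5}, $e_{\iota_{n,m}(\mu)}=c e_\mu+f_{n,m;\mu}$). However, $\eta_{n,m}$ and $\phi_{n,m}$ are deliberately defined through the spherical \emph{functions} $\psi_\nu$ and $\widetilde\psi_\nu$ (equivalently through the spherical Fourier transform $\widehat f(\mu)=\langle f,\psi_\mu\rangle_2$), not through $e_\mu$, so this mismatch never enters the computation; the proof runs entirely at the level of scalar coefficients $a(\mu)\in\mathbb{C}$. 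I would include one sentence to flag this, and otherwise the argument is the direct coefficient comparison above. $\square$
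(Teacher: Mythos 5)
Your proposal is correct and follows essentially the same route as the paper: expand $f$ in its spherical Fourier series, apply the $K$-invariant form of $H_{t,n}$ (Lemma \ref{le-Heatonf} together with Theorem \ref{th-SeqSp}(3)--(4)), and compare the coefficients of $\widetilde{\psi}_{\iota_{n,m}(\mu)}$ obtained from the two composites, exactly as in the paper's computation of $\phi_{n,m}(H_{t,n}f)=H_{t,m}(\eta_{n,m}f)$. Your version is in fact slightly more careful, since you track the factor $d(\iota_{n,m}(\mu))$ consistently with the definition of $\phi_{n,m}$ and explicitly note the injectivity of $\iota_{n,m}$ and the irrelevance of the $e_\mu$ versus $e_{\iota_{n,m}(\mu)}$ mismatch, points the paper passes over silently.
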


\begin{proof}
Let $f=\sum_{\mu\in\Lambda^+} d(\mu )\widehat{f}(\mu )\psi_{\mu } \in
L^2(M_n)^{K_n}$. Then
\[H_{t,n}(f) = \sum_{\mu } d(\mu )e_n(- t, \mu )\widehat{f}(\mu )\psi_{\mu}\]
and
\begin{eqnarray*}
\phi_{n,m}(H_{t,n}(f)) &=&\sum_{\mu\in\Lambda^+} d(\mu )\sqrt{\frac{d(\mu)}{%
d(\iota_{n,m}(\mu))}}\, \, \frac{e_n(t, \mu )}{e_m(t, \iota_{n,m}(\mu ))}\,
\widehat{f}(\mu )e_n(t, \mu)^{-1}\widetilde{\psi}_{\iota_{n,m}(\mu )} \\
&=& \sum_{\mu\in\Lambda^+} d(\mu )\sqrt{\frac{d(\mu)}{d(\iota_{n,m}(\mu ))}}%
\, \widehat{f}(\mu ) e_m(t, \iota_{n,m}(\mu) )^{-1}\widetilde{\psi}%
_{\iota_{n,m}(\mu )} \\
&=& H_{t,m}(\eta_{n,m}(f))\, .
\end{eqnarray*}
\end{proof}

Using the universal mapping property of the direct limit of Hilbert spaces
as in Theorem 6.2, we obtain the following:

\begin{theorem}
\label{result} There exists a unique unitary isomorphism
\[S_{t,\infty} :
L^2(M_\infty)^{K_\infty} \to \mathcal{H}_t (M_{\infty\mathbb{C}})^{K_{\infty\mathbb{C}}}\]
such that
the diagram
\begin{equation*}
\begin{diagram}
\node{\dots}\arrow{e}
\arrow{s}
\node{L^2(M_n)^{K_n}}\arrow{e,t}{\eta_{n+1,n}}
\arrow{s,l}{H_{t,n}}
\node{L^2(M_{n+1})^{K_{n+1}}}\arrow{e}
\arrow{s,r}{H_{t,n+1}}
\node{\dots}\arrow{e}
\arrow{s}
\node{L^2(M_\infty)^{K_\infty}}
\arrow{s,l}{S_{t,\infty}}\\
\node{\dots}\arrow{e}
\node{\mathcal{H}_t(M_n^\C)^{K_n^\C}}
\arrow{e,t}{\phi_{n+1,n}}
\node{\mathcal{H}_t(M_{n+1}^\C)^{K_{n+1}^\C}}\arrow{e}
\node{\dots}\arrow{e}
\node{\mathcal{H}_t (M_{\infty\mathbb{C}})^{K_{\infty\mathbb{C}}}}
\end{diagram}
\end{equation*}
commutes. Furthermore $\eta_\infty\circ S_{t\infty}=\phi_n\circ H_{t,n}$.
\end{theorem}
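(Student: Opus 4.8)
The plan is to obtain $S_{t,\infty}$ as the direct limit of the maps $H_{t,n}$, exactly in the spirit of the proof of Theorem \ref{th-result1}. First I would record that the three ingredients needed for the universal mapping property of direct limits in the category of Hilbert spaces and isometric embeddings (see \cite{Natarajan}) are already in hand: (i) $\{L^2(M_n)^{K_n},\eta_{n,m}\}$ is a direct system, which was established above by checking that each $\eta_{n,m}$ is an isometric $U_n$-embedding and that $\eta_{n,p}=\eta_{m,p}\circ\eta_{n,m}$ for $n\le m\le p$; (ii) likewise $\{\mathcal{H}_t(M_{n\C})^{K_{n\C}},\phi_{n,m}\}$ is a direct system; and (iii) each $H_{t,n}\colon L^2(M_n)^{K_n}\to\mathcal{H}_t(M_{n\C})^{K_{n\C}}$ is a unitary isomorphism. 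The last point follows by restricting Theorem \ref{eq-HtIso} to the $K_n$-fixed subspaces: since $H_{t,n}$ is a $U_n$-isomorphism it carries $L^2(M_n)^{K_n}$ onto $\mathcal{H}_t(M_{n\C})^{K_n}=\mathcal{H}_t(M_{n\C})^{K_{n\C}}$ isometrically (cf.\ Theorem \ref{th-SeqSp}(3)--(4)).

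Next I would invoke Lemma \ref{lemmacommutative}, which states precisely that $H_{t,m}\circ\eta_{n,m}=\phi_{n,m}\circ H_{t,n}$ for all $m\ge n$; in other words the family $(H_{t,n})_n$ is a \emph{morphism of direct systems} of Hilbert spaces. The universal property then produces a unique bounded linear map $S_{t,\infty}\colon L^2(M_\infty)^{K_\infty}\to\mathcal{H}_t(M_{\infty\C})^{K_{\infty\C}}$ satisfying $S_{t,\infty}\circ\eta_n=\phi_n\circ H_{t,n}$ for every $n$, which is exactly the commutativity of the displayed ladder. Concretely, on the dense subspace $\bigcup_n\eta_n\bigl(L^2(M_n)^{K_n}\bigr)$ one sets $S_{t,\infty}(\eta_n(f)):=\phi_n(H_{t,n}(f))$; Lemma \ref{lemmacommutative} guarantees this is independent of the index $n$ chosen to represent an element, and since $\eta_n$, $\phi_n$, $H_{t,n}$ are all isometric, the assignment is norm-preserving on a dense subspace and therefore extends uniquely to an isometry $S_{t,\infty}$.

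Finally I would check surjectivity: the image of $S_{t,\infty}$ contains $\bigcup_n\phi_n\bigl(\mathcal{H}_t(M_{n\C})^{K_{n\C}}\bigr)=\bigcup_n\phi_n\bigl(H_{t,n}(L^2(M_n)^{K_n})\bigr)$ because each $H_{t,n}$ is onto, and this union is dense in $\mathcal{H}_t(M_{\infty\C})^{K_{\infty\C}}$ by construction of the direct limit; an isometry with dense image is a unitary isomorphism. Uniqueness of $S_{t,\infty}$ is immediate from density of $\bigcup_n\eta_n(L^2(M_n)^{K_n})$ together with continuity, and the final compatibility assertion $S_{t,\infty}\circ\eta_n=\phi_n\circ H_{t,n}$ is just the $n$-th rung of the commuting ladder. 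I do not expect a genuine obstacle here: all the analytic content---unitarity of each $H_{t,n}$ and the intertwining identity of Lemma \ref{lemmacommutative}---is already established, so the only point requiring care is the bookkeeping of the normalizing factors $\sqrt{d(\mu)/d(\iota_{n,m}(\mu))}$ and $e_n(t,\mu)/e_m(t,\iota_{n,m}(\mu))$ used to define $\eta_{n,m}$ and $\phi_{n,m}$, and the compatibility of these constants is exactly what was verified in the computations preceding Lemma \ref{lemmacommutative}.
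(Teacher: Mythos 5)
Your proposal is correct and follows essentially the same route as the paper: the authors also deduce the theorem from Lemma \ref{lemmacommutative} together with the universal mapping property of direct limits of Hilbert spaces, exactly as in Theorem \ref{th-result1}. Your additional details (well-definedness on the dense union, isometry, dense image, hence unitarity and uniqueness) are just the standard unwinding of that universal property argument.
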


\end{document}